\title{Coalgebraic Formal Curve Spectra and the Annular Tower}
\author{Eric Peterson}
\address{1 Oxford Street, Cambridge, MA 02143}
\email{ecp@math.harvard.edu}
\theoremstyle{plain}
\newtheorem*{theorem*}{Theorem}
\newtheorem*{lemma*}{Lemma}
\newtheorem*{corollary*}{Corollary}
\newtheorem{theorem}{Theorem}[subsection]
\newtheorem{lemma}[theorem]{Lemma}
\newtheorem{corollary}[theorem]{Corollary}
\theoremstyle{definition}
\newtheorem{definition}[theorem]{Definition}
\newtheorem{conjecture}[theorem]{Conjecture}
\newtheorem*{metaquestion*}{Meta-question}
\theoremstyle{remark}
\newtheorem{remark}[theorem]{Remark}
\numberwithin{equation}{section}
\newcommand{\Q}{\mathbb Q}
\newcommand{\R}{\mathbb R}
\newcommand{\C}{\mathbb C}
\newcommand{\Z}{\mathbb Z}
\newcommand{\F}{\mathbb F}
\newcommand{\W}{\mathbb W}
\newcommand{\X}{\mathbb X}
\renewcommand{\H}{\mathbb H}
\renewcommand{\S}{\mathbb S}
\newcommand{\RP}{\R\mathrm P}
\newcommand{\CP}{\C\mathrm P}
\newcommand{\HP}{\H\mathrm P}
\newcommand{\XP}{\X\mathrm P}
\newcommand{\m}{\mathfrak m}
\newcommand{\op}{\mathrm{op}}
\newcommand{\G}{\widehat{\mathbb G}}
\newcommand{\A}{\widehat{\mathbb A}}
\newcommand{\Susp}{\Sigma}
\newcommand{\Loops}{\Omega}
\newcommand{\sm}{\wedge}
\newcommand{\<}{\langle}
\renewcommand{\>}{\rangle}
\renewcommand{\epsilon}{\varepsilon}
\newcommand{\eps}{\epsilon}
\newcommand{\cotensor}{\mathbin{\square}}
\newcommand{\from}{\leftarrow}
\newcommand{\Cobar}{\Omega}
\newcommand{\Spin}{\mathrm{Spin}}
\newcommand{\String}{\mathrm{String}}
\newcommand{\co}{\colon\thinspace}
\DeclareMathOperator{\height}{ht}
\DeclareMathOperator{\Tor}{Tor}
\DeclareMathOperator{\Cotor}{Cotor}
\DeclareMathOperator{\End}{End}
\DeclareMathOperator{\Tot}{Tot}
\DeclareMathOperator{\Sch}{Sch}
\DeclareMathOperator{\cofib}{cofib}
\DeclareMathOperator{\fib}{fib}
\DeclareMathOperator{\Spf}{Spf}
\DeclareMathOperator{\colim}{colim}
\DeclareMathOperator{\Spec}{Spec}
\DeclareMathOperator{\Div}{Div}
\DeclareMathOperator{\Aut}{Aut}
\DeclareMathOperator{\SDiv}{SDiv}
\DeclareMathOperator{\Ext}{Ext}
\newcommand{\mylim}[2]{\mathop{\underset{#1}{\lim{}^{#2}}}}
\newcommand{\CatOf}[1]{\textsc{#1}}
\newcommand{\sheaf}[1]{\mathcal{#1}}
\newcommand{\range}[2]{{}_{#1}^{#2}}
\newcommand{\ps}[1]{\left\llbracket {#1} \right\rrbracket}
\renewcommand{\th}{{\textsuperscript{th}}}
\begin{document}

\begin{abstract}    
We import into homotopy theory the algebro-geometric construction of the cotangent space of a geometric point on a scheme.  Specializing to the category of spectra local to a Morava $K$--theory of height $d$, we show that this can be used to produce a choice-free model of the determinantal sphere as well as an efficient Picard-graded cellular decomposition of $K(\Z_p, d+1)$.  Coupling these ideas to work of Westerland, we give a ``Snaith's theorem'' for the Iwasawa extension of the $K(d)$--local sphere.
\end{abstract}

\maketitle


\section{Introduction}

Much of modern chromatic homotopy theory is underpinned by a process that converts spaces to formal schemes: given an even--periodic cohomology theory $E$ and a space $X$, if $E^* X$ is suitably nice we can define a formal scheme $X_E$ by the formula \[X_E := \Spf E^0 X,\] where the formal topology can be taken to come from the subskeleta of a cellular structure on $X$.  The very definition of complex--orientability is designed so that this construction carries $\CP^\infty$ to the formal affine line: \[\Spf E^0 \CP^\infty = \{\Spec E^0 \CP^n\}_n \cong \{\Spec E^0[x] / x^{n+1}\}_n =: \A^1_{/\Spec E^0}.\]  In fact, many familiar values of $X$ are carried to other familiar schemes---for instance, $BU(n)_E$ models the scheme of effective Weil divisors on $\CP^\infty_E$ of rank $n$, $\HP^\infty_E$ is sent to a formal curve, and the symplectification map is sent to a degree $2$ isogeny $\CP^\infty_E \to \HP^\infty_E$.  The reach of this construction is maximized when $E = E_\Gamma$ is taken to be the Morava $E$--theory associated to a formal group $\Gamma$ of finite height $d$ over a perfect field $k$ of positive characteristic $p$.  In this setting, a great many theorems have been proven about this assignment from spaces to formal schemes, as partially tabulated in \Cref{FormalSchemeListFig}.

\begin{figure}[h]
\begin{center}
\begin{tabular}{@{}cc@{}} \toprule
$X$ & $X_{E_\Gamma}$ \\
\cmidrule(l){1-1} \cmidrule(l){2-2}
one--point space & Lubin--Tate space for $\Gamma$ \\
$\CP^\infty$ & versal deformation $\tilde \Gamma$ of $\Gamma$ \\
$BU(n)$ & effective Weil divisors of rank $n$ on $\tilde \Gamma$ \\
$BU \times \Z$ & stable Weil divisors on $\tilde \Gamma$ \\
$BU$ & stable Weil divisors on $\tilde \Gamma$ of virtual rank $0$ \\
$BS^1[p^j]$ & the $p^j$--torsion subgroup $\tilde \Gamma[p^j]$ \\
$BA^*$ & the mapping scheme $\underline{\CatOf{FormalGroups}}(A, \tilde \Gamma)$ \\
$B^q(S^1)$ & the $q${\th} exterior power of $\tilde \Gamma$ \\
$B^q(S^1[p^j])$ & the $p^j$--torsion subgroup of $\tilde \Gamma^{\wedge q}$ \\
$BA^*$ (mod annihilators of $x(a)$) & level--$A$ structures on $\tilde \Gamma$ \\
$B\Sigma_n$ (mod transfers) & subgroup divisors of rank $n$ on $\tilde \Gamma$ \\
$BSU$ & special stable Weil divisors on $\tilde \Gamma$ of virtual rank $0$, \\
      & equivalently, $C_2(\tilde \Gamma) := \operatorname{Sym}^2_{\Div \tilde \Gamma} \Div_0 \tilde \Gamma$ \\
$BU[6, \infty)$ & $C_3(\tilde \Gamma)$ \\
$\HP^\infty$ & a formal curve $\overline \Gamma$ double-covered by $\tilde \Gamma$ \\
$B\mathrm{Sp}$ & stable Weil divisors on $\overline \Gamma$ \\
$BO$ & $\Div \overline \Gamma \times_{\Div \overline \Gamma[2]} \Div \tilde \Gamma[2]$ \\
$BSO$ & $\Div \overline \Gamma \times_{\Div \overline \Gamma[2]} \SDiv \tilde \Gamma[2]$ \\
$\Spin/SU$ & $C_2(\tilde \Gamma) / ([a, -a])$ \\
$B\Spin$ (when $\operatorname{ht} \Gamma \le 2$) & $C_2(\tilde \Gamma) / ([a, b] + [-a, -b])$ \\
$B\String$ (when $\operatorname{ht} \Gamma \le 2$) & $C_3(\tilde \Gamma) / ([a, b, -a-b])$ \\
$\vdots$ & $\vdots$ \\
\bottomrule
\end{tabular}
\end{center}
\caption{Some of the known association between spaces and formal schemes}\label{FormalSchemeListFig}
\end{figure}

The most remarkable feature of this table is that when $\Gamma$ varies and the space $X$ is fixed, the same family of formal schemes appears as output.  This inspires us to consider $X$ itself as playing some scheme--like role at the level of homotopy theory and to study algebro-geometric operations on $X$ intrinsically.  The goal of the present work is to use a spectrum-level construction analogous to that of Hopf algebra homology to define the ``tangent spectrum'' of certain extremely nice spaces $X$.  We first build up to this definition and then justify its utility by demonstrating that it has the expected behavior in $E_\Gamma$--homology.  The details of this computation inform us at what homotopical level we expect our construction to behave well: we find that it lives naturally in the category of spectra localized at $E_\Gamma$--cohomology, which we abbreviate to the category of \textit{$\Gamma$--local spectra}.  With this established, we then use the computational apparatus to extract further interesting constructions within the $\Gamma$--local category.  The main result of this paper is thus:
\vspace{\baselineskip}
\begin{theorem*}
For $X$ any pointed space and $\Gamma$ any finite height formal group over a perfect field $k$, there is a tower of $\Gamma$--local spectra, functorial in $X$:
\begin{center}
\begin{tikzcd}
C\range{0}{\infty} \arrow{r} & C\range1\infty \arrow{r} & C\range2\infty \arrow{r} & \cdots \\
C\range00 \arrow["\fib"]{u} & C\range11 \arrow["\fib"]{u} & C\range22 \arrow["\fib"]{u} & \cdots,
\end{tikzcd}
\end{center}
with natural equivalences
\begin{align*}
C\range00 & \simeq \S, &
C\range0\infty & \simeq \Susp^\infty_+ X, &
C\range1\infty & \simeq \Susp^\infty X.
\end{align*}
If $X_{E_\Gamma}$ is a formal curve (i.e., $E_\Gamma^0 X$ is abstractly isomorphic to a univariate formal power series $E_\Gamma^0$--algebra) and $p \gg \height \Gamma$,\footnote{This hypothesis comes from our proofs in \Cref{LimitAppendix} and seems likely to be unnecessary.} then \[T^*_0 (X_{E_\Gamma}) \cong E_\Gamma^0(C\range11),\] where $T_0^*$ indicates the cotangent space at the origin.  Moreover, $C\range{k}{k} = (C\range11)^{\sm k}$.
\end{theorem*}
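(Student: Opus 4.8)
The plan is to construct the tower directly from the coalgebra structure carried by $\Susp^\infty_+ X$ and then to extract all the claimed identifications from a single computation: the $E_\Gamma$-homology of the layers. To build the tower, recall that $\Susp^\infty_+ X$ is canonically a cocommutative, counital, coaugmented coalgebra in spectra, with comultiplication induced by the diagonal $X \to X \times X$, counit by $X \to \pt$, and coaugmentation $\S \to \Susp^\infty_+ X$ by the basepoint; splitting off the coaugmentation yields $\Susp^\infty_+ X \simeq \S \vee \Susp^\infty X$, and the reduced diagonal endows $\Susp^\infty X$ with comultiplications $\bar\psi^{(k)} \co \Susp^\infty X \to (\Susp^\infty X)^{\sm k}$. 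The tower $C\range{k}{\infty}$ is then the cobar (coradical-filtration) tower of this coalgebra---the quotient tower of the filtration by coradical degree, built functorially out of iterated fibers and cofibers of the maps $\bar\psi^{(k)}$---with $C\range{k}{k} := \fib(C\range{k}{\infty} \to C\range{k+1}{\infty})$. Functoriality in $X$ is automatic, as every input to the construction is functorial. The three low-degree equivalences are then immediate from the splitting: $C\range0\infty$ is the coalgebra itself, $C\range1\infty = \cofib(\S \to \Susp^\infty_+ X) = \Susp^\infty X$ is the coaugmentation coideal, and $C\range00 = \fib(\Susp^\infty_+ X \to \Susp^\infty X) \simeq \S$ because that cofiber sequence splits.

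The heart of the proof is the computation of $E_\Gamma$-homology, and this is where the formal-curve hypothesis does its work. Dualizing $E_\Gamma^0 X \cong E_\Gamma^0 \ps{x}$, the homology $E_{\Gamma *} X$ is free over $E_{\Gamma *}$ and concentrated in even degrees; concretely it is the divided-power coalgebra $E_{\Gamma *}\{\beta_0, \beta_1, \beta_2, \dots\}$ on a single degree-$2$ generator, with $\psi(\beta_n) = \sum_{i+j=n} \beta_i \otimes \beta_j$. Freeness and evenness furnish a Künneth isomorphism for every smash power appearing in the construction, so $E_{\Gamma *}$ carries the tower onto the quotient tower of the algebraic coradical filtration of $E_{\Gamma *}X$, whose $k$-th layer is the rank-one free module $E_{\Gamma *}\{\beta_k\}$ in degree $2k$; in particular $E_{\Gamma *}C\range11 \cong \mathrm{Prim}\, E_{\Gamma *}X = E_{\Gamma *}\{\beta_1\}$. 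Evenness also forces the long exact sequence of each fiber sequence $C\range{k}{k} \to C\range{k}{\infty} \to C\range{k+1}{\infty}$ to split into short exact sequences $0 \to E_\Gamma^0 C\range{k+1}{\infty} \to E_\Gamma^0 C\range{k}{\infty} \to E_\Gamma^0 C\range{k}{k} \to 0$, which drives an induction identifying $E_\Gamma^0 C\range{k}{\infty}$ with $I^k$ and $E_\Gamma^0 C\range{k}{k}$ with $I^k/I^{k+1}$, where $I = \ker(E_\Gamma^0 X \to E_\Gamma^0)$. Taking $k = 1$ gives $E_\Gamma^0 C\range11 \cong I/I^2 = T_0^*(X_{E_\Gamma})$---equivalently, the completed $E_\Gamma^0$-linear dual of $\mathrm{Prim}\, E_{\Gamma *}X$, which is the standard passage from primitives of a homology coalgebra to indecomposables of the cohomology algebra.

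For the final claim, observe that $E_{\Gamma *} C\range11$ is free of rank one over $E_{\Gamma *}$ in an even degree, so $C\range11$ is invertible in the $\Gamma$-local category and $(C\range11)^{\sm k}$ has $E_\Gamma$-homology free of rank one in degree $2k$. The $k$-fold reduced comultiplication, restricted to coradical layers, defines a natural comparison map $C\range{k}{k} \to (C\range11)^{\sm k}$---the spectral shadow of the algebraic map $\mathrm{gr}_k \to (\mathrm{gr}_1)^{\otimes k}$ carrying $\beta_k$ to $\beta_1^{\otimes k}$. By the divided-power computation this map is an $E_\Gamma$-homology isomorphism, and an $E_\Gamma$-homology isomorphism between $\Gamma$-local spectra is an equivalence; hence $C\range{k}{k} \simeq (C\range11)^{\sm k}$.

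I expect the genuine difficulty to be convergence rather than the layer-by-layer bookkeeping. One must show that the cobar/coradical tower converges in the $\Gamma$-local category---that the pertinent homotopy limit, together with its $\lim^1$-obstruction, behaves as required---so that the cotangent identification is not vacuous and $C\range11$ deserves to be called a first-order invariant rather than merely mapping to one; controlling the $E_\Gamma$-based convergence of an iterated cobar construction is precisely what forces the auxiliary hypothesis $p \gg \height\Gamma$ and is carried out in \Cref{LimitAppendix}. A lesser technical point is the dictionary between the multiplicative $I$-adic filtration on $E_\Gamma^0 X$ and the comultiplicative coradical filtration on $E_{\Gamma *}X$: this is routine given the pro-freeness of $E_\Gamma^0 X$ for a formal curve, but one must take care throughout that ``dual'' means the correctly completed dual.
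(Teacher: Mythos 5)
There are two substantive problems with the argument as you have structured it, and both concern the last paragraph where you invoke $E_\Gamma$-homology to conclude equivalences.

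First, it is simply false that an $E_\Gamma$-homology isomorphism between $\Gamma$-local spectra is an equivalence, and the paper flags this explicitly: $E_\Gamma$-\emph{homology} is the one member of the family $\{E_\Gamma^*, K_\Gamma^*, K_{\Gamma *}, E^\vee_\Gamma\}$ that does \emph{not} detect $\Gamma$-local equivalences, for the same reason that $\Z_p \otimes_\Z \Z_p$ is not $p$-complete (see \Cref{ComputationalToolsSection} and \Cref{AcyclicsAndBocksteins}). The paper sidesteps this by computing in $K_\Gamma$-homology throughout the convergence arguments, and by passing to \emph{continuous} $E$-homology $E^\vee_\Gamma = \pi_* L_\Gamma(E_\Gamma \sm -)$ when it needs to identify Picard elements via \Cref{EthyDeterminesPic}. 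Your divided-power calculation and your final equivalence criterion both need to be rerouted through $K_\Gamma$ (for detection) and $E^\vee_\Gamma$ (for the representation-theoretic fingerprint), and this is not a cosmetic substitution: the Mittag-Leffler and pro-constancy arguments in the paper's \Cref{CorrectConvergence} and \Cref{CoskelTowerIsProconstant} are specific to $K_\Gamma$, and they are exactly what makes the computation rigorous.

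Second, the comparison map $C\range kk \to (C\range11)^{\sm k}$ you assert does not exist in the form you describe. The fiber of a smash power is not the smash power of the fibers, so ``restricting $\bar\psi^{(k)}$ to coradical layers'' does not produce the map you want. The paper's \Cref{IdentificationOfPowers} instead builds a \emph{zig-zag}
\[
E^\vee C\range nn \longleftarrow E^\vee (C\range 11 \cotensor_C C\range{n-1}{n-1}) \longrightarrow E^\vee(C\range 11 \sm C\range{n-1}{n-1}),
\]
where the leftward leg is genuinely not invertible at the spectrum level (the two cotensor factors overlap), and concludes only after passing to $E^\vee$-homology that the overall $\Aut\Gamma$-representation must agree. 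Your direct map would shortcut this, but it is not available. Finally, you are right that convergence of the cobar totalization is the central difficulty, but the order of the argument matters: the Künneth-type identifications you use to identify the layers already \emph{presuppose} that the relevant limits commute with smash products, which is precisely what \Cref{CotensorExistenceLemma} and the pro-constancy result \Cref{CoskelTowerIsProconstant} are needed to secure, and which is where $p \gg \height\Gamma$ enters through \Cref{SadofskysTheoremInCotangentSection}. As written, your computation runs circularly through its own convergence hypothesis.
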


In the case that $X_{E_\Gamma}$ is a formal curve, we are thus motivated to take $C\range11$ as a definition of $T_+ X$, the tangent space of $X$ at its natural pointing $\S^0 \to \Susp^\infty_+ X$.  This construction is most interesting when $X$ is chosen so as to have some special relevance to the $\Gamma$--local category.  For instance, we will show the following result, which is highly specific both to Morava $E$--theory and to the choice of $\Gamma$:

\begin{lemma*}
Setting $X = \Susp^\infty_+ B^d S^1$ for $d = \operatorname{ht} \Gamma$, $X_{E_\Gamma}$ is a formal curve and $\Aut \Gamma$ acts on the tangent space through the determinant representation.
\end{lemma*}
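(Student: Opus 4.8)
My plan is to identify $(B^dS^1)_{E_\Gamma}$ with the $d${\th} exterior power $\Lambda^d\tilde\Gamma$ of the versal deformation, to note that the numerology of exterior powers of $p$--divisible groups forces this to be a formal curve, and then to extract the $\Aut\Gamma$--action from the functoriality of $\Lambda^d$ together with a Dieudonn\'e-theoretic computation of the cotangent line.

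First I would invoke the entry of \Cref{FormalSchemeListFig} identifying $(B^qS^1)_{E_\Gamma}$ with the $q${\th} exterior power of $\tilde\Gamma$, specialized to $q=d$, being careful that the identification is $\Aut\Gamma$--equivariant for the action on the pair $(E_\Gamma,\tilde\Gamma)$ and that it holds integrally over $E_\Gamma^0$. Since $\tilde\Gamma$ is a connected $p$--divisible group of height $d$ and dimension $1$, the standard formulas for exterior powers give that $\Lambda^d\tilde\Gamma$ is connected of height $\binom d d = 1$ and dimension $\binom{d-1}{d-1} = 1$; hence $E_\Gamma^0(B^dS^1) \cong \mathcal O_{\Lambda^d\tilde\Gamma}$ is abstractly a univariate formal power series $E_\Gamma^0$--algebra, i.e. $(B^dS^1)_{E_\Gamma}$ is a formal curve. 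The hypothesis of the Theorem above then applies once $p \gg \height\Gamma$, so $E_\Gamma^0(C\range11) \cong T_0^*(\Lambda^d\tilde\Gamma) = \omega_{\Lambda^d\tilde\Gamma}$, a free rank--one $E_\Gamma^0$--module inheriting an $\Aut\Gamma$--action; alternatively, the $\Aut\Gamma$--action on $C\range11$ itself is visible directly from functoriality of the annular tower in $X$ and of $E_\Gamma$ in $\Gamma$, with no hypothesis on $p$.

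It remains to identify this action with the determinant character. Because exterior powers of the $p$--divisible groups at hand commute with formation of Dieudonn\'e crystals, $\mathbb D(\Lambda^d\tilde\Gamma) \cong \Lambda^d\mathbb D(\tilde\Gamma) = \det\mathbb D(\tilde\Gamma)$, a line on which $\Aut\Gamma$ acts through the determinant of its action on the rank--$d$ module $\mathbb D(\tilde\Gamma)$---that is, through the determinant representation (restricting on $\Aut_k\Gamma$ to the reduced norm). Since $\Lambda^d\tilde\Gamma$ has equal height and dimension, its Hodge filtration identifies $\omega_{\Lambda^d\tilde\Gamma}$, up to the standard duality twist, with $\det\mathbb D(\tilde\Gamma)$, which yields the claim. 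The same conclusion can be reached more concretely by reducing mod $\m$ and base changing to $\bar k$: there $\Lambda^d\Gamma \cong \G_m$, so the action factors through $\Aut\Gamma \to \Aut_{\bar k}(\Lambda^d\Gamma) \cong \Z_p^\times$, this composite acts on $\operatorname{Lie}(\Lambda^d\Gamma) \cong \det\mathbb D(\Gamma)$ by scalars, and one then lifts the computation back to $E_\Gamma^0$ by functoriality of Lubin--Tate theory.

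The step I expect to be the main obstacle is making this last identification both precise and equivariant. Concretely: (i) one must confirm that the ``exterior power'' of \Cref{FormalSchemeListFig} is literally the exterior power of $p$--divisible groups, so that the formula $\mathbb D(\Lambda^d G) = \det\mathbb D(G)$ is available, and that the identification is $\Aut\Gamma$--equivariant over all of $E_\Gamma^0$ rather than merely over the residue field; (ii) since the $\Aut\Gamma$--action on $E_\Gamma^0$ is nontrivial, ``the action is the determinant representation'' must be read as an isomorphism of $E_\Gamma^0$--modules with semilinear $\Aut\Gamma$--action between $\omega_{\Lambda^d\tilde\Gamma}$ and the twist of the unit module along $\det\co\Aut\Gamma\to\Z_p^\times$; and (iii) one should pin down the duality and Frobenius--twist conventions relating the ``$\det$'' produced by $\Lambda^d$ of the $p$--divisible group to the normalization of the determinant character used in constructions of the determinantal sphere, so that this Lemma indeed supplies the choice--free model advertised in the introduction.
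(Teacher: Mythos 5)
Your proposal is correct in spirit but takes a genuinely different and substantially heavier route than the paper. Both approaches rest on Ravenel--Wilson to establish that $(B^d S^1)_{E_\Gamma}$ is a formal curve --- the numerology $\binom{d-1}{d-1}=1$ is the same in both --- but they diverge sharply on the identification of the $\Aut\Gamma$--action. You propose to identify $(B^d S^1)_{E_\Gamma}$ with the exterior power $p$--divisible group $\Lambda^d\tilde\Gamma$, then compute via Dieudonn\'e crystals that $\mathbb D(\Lambda^d\tilde\Gamma) = \det\mathbb D(\tilde\Gamma)$ and pass to the cotangent line through the Hodge filtration. The paper instead stays entirely at the level of Hopf ring generators: it fixes $\beta_1 \in E^\vee H_1$ dual to a coordinate, cites Ravenel--Wilson to show that the cotangent space of $(H_d)_E$ is dual to the $\circ$--product of $\beta_* = S^0\beta_1 \otimes S^1\beta_1 \otimes \cdots \otimes S^{d-1}\beta_1$ (where $\{1,S,\dots,S^{d-1}\}$ is the Witt--vector basis of $\End\Gamma$), and then does a three--line direct computation: equivariance of the cup product and K\"unneth maps lets one see $g$ act slotwise, and left multiplication by $g$ on the basis $\{S^i\}$ contributes precisely $\det g$ when anti-symmetrized through $\circ$.

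The trade-off is real. Your Dieudonn\'e-theoretic route is conceptually cleaner in that it explains \emph{why} the determinant appears, but every step you flag at the end as ``the main obstacle'' --- that the ``exterior power'' of the table is literally the exterior power of $p$--divisible groups, that the identification is $\Aut\Gamma$--equivariant over all of $E_\Gamma^0$, and the duality/Frobenius normalizations --- is in fact a genuine technical burden. The paper explicitly acknowledges this in a remark immediately after \Cref{RavenelWilsonForEthy}: it notes that the exterior--power interpretation ``turns out to be true, but making sense of the statement is quite complicated,'' pointing to the work of Buchstaber--Lazarev, Goerss, Hedayatzadeh, and Hopkins--Lurie as the nontrivial machinery required, and it is precisely in order to \emph{avoid} invoking that machinery that the paper performs the explicit $\beta_*$ calculation instead. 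In short: your argument would work, but as a proof of the lemma it outsources the hard part to a body of results the paper is deliberately circumventing; the paper's proof is elementary by comparison, and self-contained modulo Ravenel--Wilson.
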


\begin{corollary*}
For $p \gg \height \Gamma$, the $\Gamma$--local spectrum $T_+ \Susp^\infty_+ B^d S^1$ gives a choice--free model of $\S^{\det}$, the determinantal sphere of Gross and Hopkins (cf.\ \cite[Remark 2.5]{GHMR}, \cite[Theorem 6 and Corollary 3]{HopkinsGrossAnnouncement}).
\end{corollary*}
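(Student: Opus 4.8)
The plan is to recognize both $T_+ \Susp^\infty_+ B^d S^1$ and $\S^{\det}$ as the same class in the Picard group $\Pic_\Gamma$ of the $\Gamma$--local category. Throughout write $X = B^d S^1$, $E = E_\Gamma$, and $d = \height \Gamma$, so that $T_+ \Susp^\infty_+ B^d S^1 = C\range11$ in the notation of the main Theorem. Recall that $\S^{\det}$ is characterized (Gross--Hopkins; see \cite[Remark 2.5]{GHMR} and \cite[Theorem 6 and Corollary 3]{HopkinsGrossAnnouncement}) as the invertible $\Gamma$--local spectrum whose Morava module $E^\vee_*(\S^{\det})$ is free of rank one over $E_*$, with the Morava stabilizer group acting through the determinant character. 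Since, for $p \gg d$, the comparison map from $\Pic_\Gamma$ to the algebraic Picard group of Morava modules is an isomorphism (Hopkins--Mahowald--Sadofsky, Hovey--Sadofsky), it will suffice to show that $C\range11$ is invertible and that its Morava module is this determinant line.

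For the Morava module, the main Theorem --- this is where the hypothesis $p \gg d$ is used --- gives $E^0(C\range11) \cong T^*_0(X_E)$, and by the Lemma $X_E$ is a formal curve, so $T^*_0(X_E) \cong \m/\m^2$ is a free $E^0$--module of rank one. The Lemma moreover identifies the $\Aut\Gamma$--action on the tangent line $T_0(X_E)$ with the determinant representation; dualizing, $\Aut\Gamma$ acts on the cotangent line $E^0(C\range11)$ through the inverse determinant, and dualizing once more the homology $E^\vee_*(C\range11)$ carries the determinant action --- matching $\S^{\det}$. The step requiring genuine care is the promotion of the isomorphism $E^0(C\range11) \cong T^*_0(X_E)$ and of the Lemma's determinant action from statements about $\Aut\Gamma$ to statements about the full Morava stabilizer group, including its Galois factor: that is the structure needed to compare with $\S^{\det}$ as a Morava module. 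This should follow formally from the naturality of the annular tower in $X$ together with the naturality of $E$--(co)homology in the pair $(E, X)$ --- these are precisely the functorialities that equip the two sides with their stabilizer actions --- together with the fact that the determinant character extends canonically from $\Aut\Gamma$ to the whole group, so no choice of extension enters.

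For invertibility, $C\range11$ is a $\Gamma$--locally dualizable spectrum: it occurs as a single graded piece of the finite-type filtration of $\Susp^\infty_+ X$ supplied by the annular tower --- equivalently, as one cell of the Picard-graded cellular decomposition of $B^d S^1$ --- and a dualizable $\Gamma$--local spectrum whose $E$--cohomology is free of rank one over $E^0$ is invertible by the criterion of Hovey--Strickland. (Alternatively, Westerland's filtration of $\Susp^\infty_+ B^d S^1$ by invertible $\Gamma$--local spectra exhibits $C\range11$ as invertible directly.) Assembling the pieces: $C\range11$ is an invertible $\Gamma$--local spectrum with Morava module the determinant line, hence $C\range11 \simeq \S^{\det}$ by the algebraicity of $\Pic_\Gamma$ in the range $p \gg d$. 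Finally, whereas the classical descriptions of $\S^{\det}$ proceed through Gross--Hopkins duality or through a chosen presentation of the determinant character, the annular tower produces $C\range11 = T_+ \Susp^\infty_+ B^d S^1$ by a universal construction in which no choices are made, which is the sense in which it is a choice-free model. I expect the real obstacle to be the equivariance bookkeeping of the second paragraph --- carrying the determinant through the tangent line, the cotangent line $E^0(C\range11)$, and the homology $E^\vee_*(C\range11)$, and upgrading throughout from $\Aut\Gamma$ to the full Morava stabilizer group --- with the Picard-theoretic endgame being essentially formal once that is secured.
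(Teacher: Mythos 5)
Your proposal is correct and takes essentially the same route as the paper: identify the continuous $E$--homology of $T_+ \Susp^\infty_+ B^d S^1$ as an $E_*$--line on which $\Aut \Gamma$ acts by the determinant (the paper's \Cref{TangentSpectrumOfKZd1IsSdet}, via the $\circ$--product on the class $\beta_*$), note invertibility, and conclude from the large--prime injectivity of $E^\vee$ on Picard elements (\Cref{EthyDeterminesPic}). Two small remarks: your dualizability detour for invertibility is unnecessary, since $K_*(C\range11)$ is a $K_*$--line and \Cref{HMSThm} (cf.\ \Cref{TetaGivesPicardElements}) gives invertibility directly; and the Galois--factor bookkeeping you flag as the main obstacle does not arise in the paper's argument, whose comparison theorem is stated for $\Aut\Gamma$--equivariant $E_*$--lines.
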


\begin{corollary*}
For $p \gg \height \Gamma$, the spectrum $\Susp^\infty_+ B^d S^1$ has an efficient $\Gamma$--local Picard--graded cellular decomposition, with one cell of the form $C\range kk \simeq (\S^{\det})^{\sm k}$ for each $k \ge 0$.
\end{corollary*}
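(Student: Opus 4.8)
The plan is to read the cell structure directly off the annular tower of the Theorem, using the Lemma to verify its hypothesis and the preceding Corollary to identify its layers. Apply the Lemma with $X = B^d S^1$: it shows $(B^d S^1)_{E_\Gamma}$ is a formal curve, so for $p \gg \height\Gamma$ the Theorem produces a tower $C\range0\infty \to C\range1\infty \to C\range2\infty \to \cdots$ with $C\range0\infty \simeq \Susp^\infty_+ B^d S^1$, whose layers are the fibers $C\range kk \simeq \fib(C\range k\infty \to C\range{k+1}\infty)$, and which satisfies $C\range kk \simeq (C\range11)^{\sm k}$. The preceding Corollary identifies $C\range11 \simeq \S^{\det}$, so $C\range kk \simeq (\S^{\det})^{\sm k}$ for every $k \ge 0$. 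Since $\S^{\det}$ is an invertible object of the category of $\Gamma$--local spectra (Gross--Hopkins; cf.\ the references already cited), each layer $(\S^{\det})^{\sm k}$ represents a class in $\Pic$ lying in the cyclic submonoid generated by $[\S^{\det}]$ --- this is the grading that makes the decomposition ``Picard--graded''.

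Next I would turn the tower around to build $\Susp^\infty_+ B^d S^1$ out of these layers. Set $Y_k := \fib(C\range0\infty \to C\range{k+1}\infty)$; since $C\range0\infty \to C\range{k+1}\infty$ factors through $C\range k\infty$, the $Y_k$ form a sequence $Y_0 \to Y_1 \to Y_2 \to \cdots$. Applying the octahedral axiom to $Y_{k-1} \to Y_k \to C\range0\infty$, whose composite is the fiber inclusion, yields a cofiber sequence $\cofib(Y_{k-1} \to Y_k) \to C\range k\infty \to C\range{k+1}\infty$, so $\cofib(Y_{k-1} \to Y_k) \simeq C\range kk \simeq (\S^{\det})^{\sm k}$, and likewise $Y_0 \simeq C\range00 \simeq \S$. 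Thus $Y_k$ is obtained from $Y_{k-1}$ by attaching a single $(\S^{\det})^{\sm k}$--cell and carries exactly one cell in each degree $0, 1, \dots, k$. Because the category of $\Gamma$--local spectra is stable, colimits preserve cofiber sequences, so the sequences $Y_k \to C\range0\infty \to C\range{k+1}\infty$ give $\colim_k Y_k \simeq \fib(C\range0\infty \to \colim_k C\range k\infty)$, which equals $C\range0\infty \simeq \Susp^\infty_+ B^d S^1$ as soon as the tail of the tower vanishes, i.e.\ $\colim_k C\range k\infty \simeq 0$. Granting this, the $Y_k$ are the skeleta of a cellular decomposition of $\Susp^\infty_+ B^d S^1$ with one cell $C\range kk \simeq (\S^{\det})^{\sm k}$ for each $k \ge 0$.

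Efficiency is then visible through $E_\Gamma$--homology: by the Lemma $E_\Gamma^0 \Susp^\infty_+ B^d S^1 \cong E_\Gamma^0 B^d S^1$ is a one--variable formal power series $E_\Gamma^0$--algebra $E_\Gamma^0\ps{x}$, and the Theorem identifies $E_\Gamma^0 C\range11$ with the cotangent line $\m/\m^2$, so $E_\Gamma^0 C\range kk \cong (\m/\m^2)^{\otimes k} \cong \m^k/\m^{k+1}$ (the relevant Künneth statement holds because $\S^{\det}$ is invertible); thus the $k$--th cell accounts for exactly the class of $x^k$, the decomposition is the topological shadow of the $\m$--adic filtration, and no cell is wasted. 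The one real obstacle is the convergence input $\colim_k C\range k\infty \simeq 0$: this is precisely the kind of $K(d)$--local vanishing established in \Cref{LimitAppendix}, and it is what forces the hypothesis $p \gg \height\Gamma$ to persist into this Corollary; if one reads the Theorem as already asserting convergence of the annular tower, this step is automatic and the Corollary follows at once.
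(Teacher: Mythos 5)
Your argument follows the paper's route almost exactly: the Lemma (Ravenel--Wilson) gives the formal curve hypothesis, the Theorem gives the annular tower and $C\range kk \simeq (C\range11)^{\sm k}$, the preceding Corollary gives $C\range11 \simeq \S^{\det}$, and the dual tower $Y_k = C\range 0k = \fib(C\range0\infty \to C\range{k+1}\infty)$ is literally the paper's skeletal filtration from \Cref{FormalLines:GeneralFeatures}. You also correctly flag the one point the paper leaves tacit, namely the convergence statement $\colim_k C\range k\infty \simeq 0$, without which the skeleta $Y_k$ have no business exhausting $\Susp^\infty_+ B^d S^1$.

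However, your suggested source for that vanishing is wrong. \Cref{LimitAppendix} is the Sadofsky machinery for computing $K$--homology of \emph{inverse} limits; it has nothing to say about the vanishing of a sequential \emph{colimit} of $\Gamma$--local spectra, and invoking it here would be a category error. The actual argument is elementary and does not need $p \gg d$: since $K^0 C\range k\infty \cong \m^k \subset K^0 C \cong K^0\ps{x}$ with the structure maps inducing the inclusions $\m^{k+1} \hookrightarrow \m^k$, and since $K^* C\range k\infty$ is even so the $\lim^1$ term in the Milnor sequence vanishes, one gets $K^* \colim_k C\range k\infty \cong \lim_k \m^k = \bigcap_k \m^k = 0$, so the colimit is $K$--acyclic and hence $\Gamma$--locally trivial. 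The hypothesis $p \gg \height\Gamma$ is doing its work elsewhere --- in \Cref{SadofskysTheoremInCotangentSection} and \Cref{CorrectConvergence} to construct the cotensor powers at all, and in \Cref{EthyDeterminesPic} to pin down the Picard class of $C\range11$ as $\S^{\det}$ --- not in the convergence of the skeletal filtration.
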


\noindent This is meant to be analogous to the motivic cellular decomposition of $\CP^\infty$ into even spheres, attached along homotopy classes graded by $\Susp^{-1} (\CP^1)^{\sm k}$.  Drawing from the algebro-geometric interpretation, we refer to this as the \textit{annular tower}.

Alternatively, the cellular decomposition of $\CP^\infty$ can also be interpreted as the skeletal filtration for the bar construction $B(\C^\times)$.  This, too, admits generalization:

\begin{corollary*}
There is an $A_\infty$--ring structure on $(\Susp^{-1} \S^{\det})_+$ and a $\Gamma$--local equivalence \[B((\Susp^{-1} \S^{\det})_+) \xrightarrow{\simeq} \Susp^\infty_+ B^d S^1.\]
\end{corollary*}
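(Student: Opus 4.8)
The approach is to realize $\Susp^\infty_+ B^d S^1$ as a bar construction by Koszul--dualizing the coalgebra structure it naturally carries; this single move both manufactures the $A_\infty$--ring and carries out the comparison. Write $X := B^d S^1$. The diagonal of $X$ makes $\Susp^\infty_+ X$ a cocommutative coalgebra in $\Gamma$--local spectra, coaugmented by the unit $\S = \Susp^\infty_+ \pt \to \Susp^\infty_+ X$ and augmented by $\Susp^\infty_+ X \to \S$; forgetting to the underlying coassociative structure, form the cobar construction $R := \Cobar(\Susp^\infty_+ X)$, the totalization of the cobar cosimplicial spectrum of this coaugmented coalgebra. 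It is an augmented $A_\infty$--ring over $\S$ whose totalization filtration has $n$--th associated graded $\Susp^{-n}(\Susp^\infty X)^{\sm n}$. The plan is to show $R \simeq (\Susp^{-1}\S^{\det})_+$ $\Gamma$--locally --- transporting the $A_\infty$--structure --- and then to identify the bar--cobar counit $B R \to \Susp^\infty_+ X$ with the asserted equivalence. (Unlike the classical case $d = 1$, for $d \ge 2$ one cannot shortcut this by applying the strong monoidality of $\Susp^\infty_+$ to $B^d S^1 = B(B^{d-1} S^1)$: the ring $\Susp^\infty_+ B^{d-1}S^1$ has $d$--dimensional tangent space --- its $E_\Gamma$--scheme is $\Lambda^{d-1}\tilde\Gamma$ --- and is not $\Gamma$--locally equivalent to $(\Susp^{-1}\S^{\det})_+$, so the Koszul--duality route is genuinely needed.)

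For the identification, I would compute in $E_\Gamma$--homology. By the Lemma, $E_\Gamma^0 X \cong E_\Gamma^0\ps x$; combining this with the main theorem's $E_\Gamma^0(C\range11) \cong T_0^*(X_{E_\Gamma}) = \m/\m^2$ and the first Corollary (which identifies $C\range11 \simeq \S^{\det}$ here), the coordinate $x$ is acted on by $\Aut\Gamma$ through a determinantal character. Dualizing, the reduced Morava homology $(E_\Gamma)^\vee_* \Susp^\infty X$ is the free module $\bigoplus_{n \ge 1} E_{\Gamma*}\{\beta_n\}$ and $(E_\Gamma)^\vee_* \Susp^\infty_+ X$ is the divided--power coalgebra on $\beta_1$, with $\psi\beta_n = \sum_i \beta_i \otimes \beta_{n-i}$, where $\beta_1$ is the fundamental class of $C\range11 \simeq \S^{\det}$ and hence carries Picard--degree $\S^{\det}$. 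These modules being free, K\"unneth applies and the cobar spectral sequence has $E_2$--page $\Cotor^{(E_\Gamma)^\vee_* \Susp^\infty_+ X}_*(E_{\Gamma*}, E_{\Gamma*}) \cong \Ext_{E_{\Gamma*}\ps x}(E_{\Gamma*}, E_{\Gamma*})$, which the two--term Koszul resolution of $E_{\Gamma*}$ over $E_{\Gamma*}\ps x$ collapses to the exterior algebra on a single class $\sigma\beta_1$ of Picard--degree $\Susp^{-1}\S^{\det}$. Granting convergence, $(E_\Gamma)^\vee_* R$ is thus free of rank two, abstractly $(E_\Gamma)^\vee_*\big((\Susp^{-1}\S^{\det})_+\big)$ with the matching $\Aut\Gamma$--action (the $\det$--twist on $\beta_1$ being exactly the one characterizing $\S^{\det}$).

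Now invoke rigidity. A $\Gamma$--local spectrum with finite free Morava module is dualizable (Hovey--Strickland), so $R$ is dualizable; its augmentation splits it as $R \simeq \S \vee \widetilde R$ with $\widetilde R := \cofib(\S \to R)$ dualizable and $(E_\Gamma)^\vee_* \widetilde R \cong (E_\Gamma)^\vee_*(\Susp^{-1}\S^{\det})$ free of rank one, so $\widetilde R$ is invertible with the same Morava module as $\Susp^{-1}\S^{\det}$. Since $p \gg \height\Gamma$, the $\Gamma$--local Picard group has no exotic elements, so $\widetilde R \simeq \Susp^{-1}\S^{\det}$ and $R \simeq (\Susp^{-1}\S^{\det})_+$; this equips $(\Susp^{-1}\S^{\det})_+$ with the $A_\infty$--structure of $\Cobar(\Susp^\infty_+ B^d S^1)$. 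The bar--cobar counit $B\Cobar(\Susp^\infty_+ X) \to \Susp^\infty_+ X$ is then a map from the bar skeletal filtration to the annular tower; its $n$--th associated graded is $\Susp^n \widetilde R^{\sm n} = \Susp^n (\Susp^{-1}\S^{\det})^{\sm n} = (\S^{\det})^{\sm n}$, which matches $C\range nn$ after unwinding $C\range nn = (C\range11)^{\sm n}$, and the map is an equivalence there. Both filtrations being exhaustive and complete --- the annular tower by construction, the bar filtration because $\Susp^\infty_+ X$ is $\Gamma$--locally conilpotent, which is precisely the convergence of the annular tower --- the counit is a $\Gamma$--local equivalence; invertibility of $\Susp^{-1}\S^{\det}$ then returns the Picard--graded cellular structure of the previous Corollary.

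The main obstacle is the behavior of these (co)totalizations under $L_\Gamma$: neither the totalization defining $\Cobar$ nor the bar--cobar counit commutes with $\Gamma$--localization on the nose, so the convergence of the cobar spectral sequence and the completeness of the bar filtration must be wrung from the quantitative limit--interchange estimates of \Cref{LimitAppendix} --- this is the source of the hypothesis $p \gg \height\Gamma$, which enters again through the vanishing of exotic $\Gamma$--local Picard elements. A lesser nuisance is purely bookkeeping: the $A_\infty$--structure on $(\Susp^{-1}\S^{\det})_+$ is pinned down only relative to the identification $R \simeq (\Susp^{-1}\S^{\det})_+$, so the honest statement carries the structure transported from $\Cobar(\Susp^\infty_+ B^d S^1)$. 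Alternatively --- in the spirit of Westerland's constructions --- one might assemble this $A_\infty$--structure directly from the formal group law of the formal curve $X_{E_\Gamma} = \Lambda^d\tilde\Gamma$, its coefficients furnishing the higher multiplications and its leading correction being the ``$\det$--twisted $\eta$'' already visible in $\Susp^\infty_+ \CP^\infty$.
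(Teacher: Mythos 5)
Your first half is sound, if more roundabout than necessary: the paper gets the identification $\S \cotensor_C \S \simeq (\Susp^{-1}\S^{\det})_+$ for free from the split fiber sequence $\Susp^{-1}C\range11 \to \S\cotensor_C\S \to \S$ of \Cref{BottomAnnularLayerIsAlmostKoszul} (via \Cref{TangentSpectrumInOneCotor}), together with the already-established $C\range11 \simeq \S^{\det}$, whereas you recompute $E^\vee_*(\S\cotensor_C\S)$ by the cobar spectral sequence and then invoke \Cref{EthyDeterminesPic}; that works, but it re-proves what the annular-tower machinery already gave, and it leans again on the convergence results of \Cref{CorrectConvergence}. (A small slip in your parenthetical: the tangent dimension of $(B^{d-1}S^1)_{E_\Gamma} \cong \Lambda^{d-1}\tilde\Gamma$ is $\binom{d-1}{d-2} = d-1$, not $d$; the point you are making survives.)

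The genuine gap is in your final step. You propose to prove that the bar--cobar counit $B(\S\cotensor_C\S) \to C$ is a $\Gamma$--local equivalence by matching the bar skeletal filtration against the annular tower, asserting (i) that the counit is compatible with the two filtrations and induces the canonical equivalences $(\S^{\det})^{\sm n} \simeq C\range nn$ on associated graded, and (ii) that the annular filtration of $C$ is exhaustive/complete --- which you gloss as ``$\Susp^\infty_+ X$ is $\Gamma$--locally conilpotent, which is precisely the convergence of the annular tower.'' Neither claim is established: the paper nowhere proves that $\lim_n C\range n\infty \simeq 0$ $\Gamma$--locally (conilpotence of a suspension-spectrum coalgebra in a local category is exactly the kind of limit statement that can fail --- compare the $K(1)$--local behaviour of $\RP^\infty$ discussed in \Cref{ExampleTCPinfty}), and the filtered-compatibility of the counit with an identification on each graded piece is a nontrivial check you only assert. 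The paper's proof of \Cref{KoszulDualityForCoalgebraicFormalVarieties} is designed precisely to sidestep both issues: the source $|B(\S;\S\cotensor_C\S;\S)|$ is a geometric realization, so its $K$--homology is computed by the (collapsing) bar spectral sequence over the exterior algebra $K_*(\S\cotensor_C\S)$ and is again a formal-variety coalgebra; the counit is a map of coalgebraic formal schemes which is an isomorphism on tangent spaces, so the formal inverse function theorem forces it to be a $K_*$--isomorphism, hence a $\Gamma$--local equivalence --- no convergence or completeness of any tower on the target side is needed. To repair your argument you would either have to prove the annular tower converges (a statement not available in the paper) or replace the filtration comparison with this tangent-space/inverse-function-theorem argument.
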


\subsection{Acknowledgements}

The paper covers the original results of my PhD thesis, completed in 2015 at the University of California, Berkeley, under the direction of Constantin Teleman (and with the support of the department's NSF grant for the geometry and topology group, DMS-0838703).  The germ of this project came from an unpublished note of Neil Strickland~\cite{StricklandConjecture}, and work on it (in particular, \Cref{RavenelWilsonForEthy}) began at the University of Illinois, Urbana-Champaign, under Matthew Ando (and with the support of UIUC's NSF grant for graduate students pursuing research projects, DMS-0838434).

This project was boosted considerably by two visits to the Max Planck Institut f\"ur Mathematik in Bonn, which was facilitated by Peter Teichner.  I also benefited significantly from many conversations with Michael Hopkins, with Hal Sadofsky, and especially with Tobias Barthel on the inverse limit spectral sequence presented here.  Rune Haugseng's anonymous referee for his paper on Morita categories~\cite{Haugseng} insisted that he include exactly the \(\infty\)--categorical results I needed to underpin this paper---so, I also extend a hearty thank you to this mysterious benefactor.  My own anonymous referee provided many helpful comments that greatly improved the quality of this paper, for which I am very grateful.  Finally, I would like to thank Samrita Dhindsa for her love and support throughout.

\subsection{Notational forewarnings}

With a fixed formal group $\Gamma$ in mind, we will often abbreviate the associated Morava $K$-- and $E$--theories from $K_\Gamma$ and $E_\Gamma$ to simply $K$ and $E$.  We will be especially interested in the behavior of limits taken in different categories, and so we will write $\lim{}_{\CatOf{C}}^{\alpha}(S_\alpha)$ to denote the limit in the category $\CatOf{C}$ of the $\alpha$--indexed pro-system $S_\alpha$.

\section{Homotopical coalgebras and comodules}
\label{SpectralCotensorSection}

\subsection{Motivation: coalgebraic formal schemes}

To set the stage, we expand on our discussion from the introduction of treating a space as a scheme--like object, starting with a more careful description of ``$X_E$''.
\begin{definition}\label{FormalSchDefinition}
Let $E$ be an even--periodic cohomology theory, and let $X$ be a CW--space, so that $X = \colim_\alpha \{X_\alpha\}$ can be written as the colimit over the lattice of inclusions of its compact subspaces $X_\alpha$.  Suppose further that the system of finite $E^*$--algebras $\{E^* X_\alpha\}$ is equivalent to a sub-pro-system of even--concentrated finite $E^*$--algebras $\{E^0 X_{\alpha'} \otimes_{E^0} E^*\}$.  We then set \[X_E := \Spf E^0 X := \{\Spec E^0 X_{\alpha'}\}.\]
\end{definition}

\begin{remark}[{\cite[Section 8.2, Definition 8.15]{StricklandFSFG}}]
A sufficient condition on $X$ to ensure that $X_E$ exists is for $H_* X$ to be torsion--free and even, in which case there is a skeletal structure on $X$ whose subskeleta form a cofinal subsystem of $\{X_\alpha\}$ and which have the desired evenness property.  For instance, this covers the cases of $X = \CP^\infty$ and $X = BU$.  It does not, however, cover $X = BS^1[p^j]$, to which the definition given here nonetheless applies for $E = E_\Gamma$ a Morava $E$--theory.
\end{remark}

Remembering our goal of studying $X$ directly, we would now like to identify a topological system underlying this construction.  The compactness (hence dualizability) of $X_\alpha$ gives \[E^* X_\alpha = \pi_{-*} F(\Susp^\infty_+ X_\alpha, E) = \pi_{-*} \left( F(\Susp^\infty_+ X_\alpha, \S) \sm E \right),\] from which we are inspired to consider the pro-$E_\infty$-ring-spectrum \[\widehat D X = \{F(\Susp^\infty_+ X_\alpha, \S)\}.\]  This pro-spectrum depends only on $X$ and not on $E$, and the system appearing in \Cref{FormalSchDefinition} then arises as the homotopy of the base change \[\eta^* \widehat D X \simeq \{E \sm F(\Susp^\infty_+ X_\alpha, \S)\},\] where $\eta\co \S \to E$ is the unit map for the ring spectrum $E$.  Hence, $\widehat D X$ plays something of a role of a universal object for these constructions.

We now immediately set our eyes on the main Theorem.  Recall the diagram of $A$--modules which defines the cotangent space for a geometric point $\Spec A/\m$ of an affine scheme $\Spec A$:
\begin{center}
\begin{tikzcd}
A \arrow{d} & \m \arrow{l} \arrow{d} & \m^2 \arrow{l} \\
A / \m & \m / \m^2 = T_0^* \Spec A,
\end{tikzcd}
\end{center}
Each angle in the diagram is a cokernel sequence of $A$--modules.  Our goal is to lift this diagram of cokernel sequences of modules to a diagram of cofiber sequences of spectra, such that the original diagram is recovered upon applying $E$--cohomology.  A candidate replacement for $A$ itself is the pro-spectrum $\widehat D X$, and a pointing of the space $X$ induces an augmentation map $\widehat D X \to \S$ to the constant pro-spectrum $\S$.

However, working with $\widehat D X$ is prohibitively complicated: the Spanier--Whitehead duals and pro-system in its definition are designed to mitigate what would be the difference between the cohomology of the limit and the limit of the cohomology of the components, which we may eventually find undesirable and which we would be obligated to thread through our entire discussion.  To avoid this conceptual overhead and potential pitfall, we take inspiration from a corresponding purely algebraic situation: for a field $k$, pro-finite $k$--algebras are equivalent to ind-finite $k$--coalgebras by $k$--linear duality, and \emph{all} $k$-coalgebras are equivalent to ind-finite ones.  Hence, the theory of formal schemes (over $\Spec k$) can be taken to be underpinned by coalgebras instead,\footnote{This perspective is recorded by Demazure~\cite{Demazure} in the case of a field and by Strickland~\cite[Section 4.8]{StricklandFSFG} more generally.} as in the following definition:

\begin{definition}\label{SchDefinition}
A coalgebra $C$ engenders a formal scheme $\Sch C$ via the following dual formula for its functor of points: \[(\Sch C)(T) = \left\{ x \in C \otimes_k T \middle| \begin{array}{c} \Delta(x) = x \otimes x, \\ \eps(x) = 1 \end{array} \right\}.\]
\end{definition}

Returning to spectra, we find the following analogies: pro-finite spectra are equivalent to ind-finite spectra by Spanier--Whitehead duality, and spectra themselves are equivalent to ind-finite spectra.  Hence, the analogous coalgebraic spectral object to the ``profinite ring spectrum'' $\widehat D X$ is $C = \Susp^\infty_+ X$ itself, which we will work with \emph{directly}.  We need only firmly record the sense in which $C$ is a coalgebra before we can start doing coalgebraic geometry with it.

\subsection{Definitions and constructions}

The theory of derived algebraic geometry and spectral schemes is only now coming into view through work of Lurie and others, but the core of \emph{affine} derived algebraic geometry has been laid out for some time now via the theory of structured ring spectra.  One of the first hurdles to clear in this arena is to produce a definition of a ring spectrum (i.e., affine derived scheme) that has an associated $\infty$--category of module spectra (i.e., category of quasicoherent sheaves); such a definition is found in that of an $A_\infty$--ring spectrum.  The precise definition is fairly elaborate, and the reader is better off consulting a textbook for a full account~\cite[Chapter 4]{Lurie}, but the constituent pieces are:
\begin{itemize}
    \item The $\infty$--category $\CatOf{Spectra}$ and the category $\Delta$ of combinatorial simplices.
    \item A \textit{monoidal structure} on $\CatOf{Spectra}$, which can be encoded as a cocartesian fibration $q\co \CatOf{Spectra}^\otimes \to \CatOf{Ass}^\otimes$ of operads.
    \item A particular functor $\mathrm{Cut}\co \Delta^{\op} \to \CatOf{Ass}^\otimes$ to be used to control concatenation of labels for combinatorial simplices.
    \item A functor $A\co \Delta^{\op} \to \CatOf{Spectra}^\otimes$ respecting the pair of functors to $\CatOf{Ass}^\otimes$ and sending ``inert'' morphisms to inert morphisms.  We refer to such morphisms as \textit{$\Delta^{\op}$--algebras in $\CatOf{Spectra}$}.
\end{itemize}
Collectively, these amount to a choice of labelings of the combinatorial simplices, identifications $A[n] \simeq A[1]^{\sm n}$, a unit map $A[0] \to A[1]$, a multiplication $A[2] \to A[1]$, and a sea of compatibilities encoding associativity, unitality, and $\sm$--composability.

Since we are interested in coassocative coalgebras rather than associative algebras, we tweak this definition only very slightly:
\begin{definition}[{cf.\ \cite[Definition 4.12]{Haugseng}, \cite[Proposition 4.1.2.15]{Lurie}}]
A \textit{coassociative coalgebra} in a monoidal $\infty$--category $\CatOf C$ is a $\Delta^{\op}$--algebra in $\CatOf C^{\op}$, i.e., an associative algebra in the opposite category $\CatOf C^{\op}$.
\end{definition}

\begin{lemma}\label{SpacesAreCoalgebras}
A pointed space $X$ determines an associative algebra object in the monoidal $\infty$--category $\CatOf{Spectra}^{\op}$ and in its homological Bousfield localization $\CatOf{Spectra}_E^{\op}$ for a homology theory $E$.
\end{lemma}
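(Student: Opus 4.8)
\medskip

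\noindent\textit{Proof proposal.} The plan is to produce the coalgebra structure on $\Susp^\infty_+ X$ — equivalently, the associative algebra structure in $\CatOf{Spectra}^\op$ — purely by transport of structure along a symmetric monoidal functor, so that all the higher coherences are inherited for free rather than exhibited by hand. There are two inputs. First, the $\infty$--category $\CatOf{Spaces}$ equipped with the cartesian monoidal structure $\times$ has the property that \emph{every} object is canonically a cocommutative coalgebra, with comultiplication the diagonal $\delta\co X \to X\times X$ and counit the terminal map $X \to \pt$; indeed, in a cartesian monoidal $\infty$--category the forgetful functor from cocommutative coalgebras is an equivalence, this being the dual of the statement that a cocartesian monoidal category carries a unique commutative algebra structure on each object (see \cite[Section 2.4]{Lurie}). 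Second, the functor $\Susp^\infty_+\co \CatOf{Spaces} \to \CatOf{Spectra}$ is symmetric monoidal for $(\times, \sm)$, via the natural equivalences $\Susp^\infty_+ \pt \simeq \S$ and $\Susp^\infty_+(X\times Y) \simeq \Susp^\infty_+ X \sm \Susp^\infty_+ Y$.

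Granting these, I would pass to opposite $\infty$--categories: a symmetric monoidal functor $F$ induces a symmetric monoidal functor $F^\op$, which therefore carries associative (here even commutative) algebra objects to associative algebra objects. Applying this to $F = \Susp^\infty_+$ and to $X$ regarded as an algebra object of $(\CatOf{Spaces})^\op$ by the first input above, its image $\Susp^\infty_+ X$ becomes an associative algebra object of $\CatOf{Spectra}^\op$, which is exactly a coassociative coalgebra in $\CatOf{Spectra}$ in the sense fixed above. The basepoint $\pt \to X$ additionally yields, after $\Susp^\infty_+$, a coalgebra map $\S \to \Susp^\infty_+ X$ splitting the counit, so the output may be upgraded to a coaugmented coalgebra — the structure that will later give rise to the augmentation $\widehat D X \to \S$ needed to set up the cotangent construction.

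For the $E$--local assertion I would only need to know that the Bousfield localization $L_E\co \CatOf{Spectra} \to \CatOf{Spectra}_E$ is itself symmetric monoidal. This reduces to the observation that the $E$--acyclic spectra form a $\sm$--ideal: if $E \sm Z \simeq 0$ then $E\sm Z\sm W \simeq 0$ for every $W$, so $\CatOf{Spectra}_E$ inherits a symmetric monoidal structure for which $L_E$ is symmetric monoidal (\cite[Section 2.2]{Lurie}). The composite $L_E \circ \Susp^\infty_+$ is then symmetric monoidal, and rerunning the previous paragraph with this composite in place of $\Susp^\infty_+$ equips $L_E\Susp^\infty_+ X$ with an associative algebra structure in $\CatOf{Spectra}_E^\op$. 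The entire argument is formal; the only places asking for attention are the bookkeeping with opposite categories (to be sure that a symmetric monoidal functor is delivering a map on \emph{algebra} objects, hence on coalgebras) and the one--line check that the $E$--acyclics form a tensor ideal, so I anticipate no genuine obstacle — the real work of the paper lies in constructing and computing the tower, not in this structural lemma.
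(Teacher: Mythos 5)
Your proposal is correct and follows essentially the same route as the paper: equip $X$ with its diagonal coalgebra structure, push forward along the symmetric monoidal functor $\Susp^\infty_+$, and then along the monoidal localization $L_E$. The only cosmetic difference is how the coalgebra structure on $X$ is produced — you invoke the abstract fact that every object of a cartesian monoidal $\infty$--category is canonically a cocommutative coalgebra, whereas the paper starts from the strictly coassociative diagonal at the $1$--categorical level and descends it through localization at weak equivalences; both are standard and yield the same structure.
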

\begin{proof}
Pointed spaces (using the underlying diagonal map of sets) already determine strictly coassociative algebra objects on the $1$--categorical level.  By localizing away from the weak equivalences and passing to the associated $\infty$--category, we recover an associative algebra object in $\CatOf{Spaces}_*^{\op}$.  Because the stabilization functor \[\Sigma^\infty_+\co \CatOf{Spaces}_* \to \CatOf{Spectra}\] respects the Cartesian monoidal structure on $\CatOf{Spaces}_*$ and the smash monoidal structure on $\CatOf{Spectra}$~\cite[Propositions 4.8.2.9 and 4.8.2.18]{Lurie} and because the monoidal structure on $\Gamma$--local spectra is defined so that the localization functor $L_\Gamma$ is monoidal, it follows that $L_\Gamma \Susp^\infty_+ X$ is a coaugmented coassociative coalgebra in $\CatOf{Spectra}_\Gamma$.
\end{proof}

Our spectral analogue of the first fiber sequence in the diagram defining the algebraic tangent space is thus given by the following diagram:
\begin{center}
\begin{tikzcd}
C \arrow{r} & M \arrow{r} & ? \\
\S \arrow["\eta"]{u} & ? \arrow{u}
\end{tikzcd}
\end{center}
where $C = \Susp^\infty_+ X$ is a coalgebraic spectrum and $\eta\co \S \to C$ is the coaugmentation map.  In order to construct the second fiber sequence, we would like to interpret this diagram as a sequence of $C$--comodules, and we would like access to a coalgebraic analogue of the product of ideals.  Here, again, we can lean on the already-developed theory of modules for associative algebra spectra: there is a \textit{nonsymmetric $\infty$--operad} $\Delta^{\op}_{/ [1]}$~\cite[Definition 4.2]{Haugseng} whose algebras in $\CatOf C$ amount to $\CatOf C$--labeled combinatorial simplices with one edge labeled by a fixed spectrum $M$ (the bimodule), all edges indexed below it labeled by a fixed spectrum $A$ (the left algebra), all edges indexed above it labeled by a fixed spectrum $B$ (the right algebra), together with maps describing the associative algebra structures of $A$ and of $B$, the left-action of $A$ on $M$, the right-action of $B$ on $M$, and compatibilities encoding associativity, unitality, and $\sm$--composability.

\begin{definition}[{\cite[Definition 4.12]{Haugseng}, \cite[Definition 4.3.1.6]{Lurie}}]
For a coassociative coalgebra $C$ in a monoidal $\infty$--category $\CatOf C$, the category of $C$--comodules is the subcategory of bimodules in $\CatOf{C}^{\op}$ whose left- and right-algebras are equivalent to $C$.
\end{definition}

Even with this technology, a product of ideals is too much to ask: the missing theory of ideals for structured ring spectra remains, to date, a thorn in homotopy theorists's sides.\footnote{The present situation here is a bit complicated.  It is known that a ring spectrum can be localized away from any homotopy element, but that quotienting some ring spectra by certain homotopy elements can yield non-associative or even non-unital spectra (e.g., respectively, $\S / p$ for odd primes $p$ and $\S / 2$).  Meanwhile, the intended collection of ideals of $\S$ is considered well-known: the theory of ideals at the level of perfect $\S$--modules is understood through the nilpotence and periodicity theorems, and the reflection of those results in the homotopy elements of $\S$ is extremely indirect.  However, there is an $\S$--algebra $MU$ which does, in a certain stacky sense, reveal this ideal structure at the level of homotopy elements (but which still does not admit all the ring spectrum quotients that one might naively expect).  At any rate, the reader must temper their expectations about the theory being developed here, as it serves our immediate purposes but not much more.}  However, in the classical case that two $A$--ideals $I$ and $J$ are \emph{principal} with generators that have \emph{no annihilators}, there is a natural isomorphism $I \otimes_A J \cong I \cdot J$.  This includes our primary case of interest, where $A = k\ps{x}$ and $I = J = \m = (x)$ is the ideal of functions vanishing at $x = 0$.\footnote{The augmentation ideal of a higher--dimensional power series algebra is \emph{not} principal.}  Motivated by this observation, we, again, borrow a definition from Haugseng and Lurie:

\begin{definition}[{\cite[Definition 4.13, Lemma 4.14]{Haugseng}, cf.\ \cite[Definition 4.4.1.1, Proposition 4.4.1.11]{Lurie}}]
There is a nonsymmetric $\infty$--operad $\Lambda^{\op}_{/[n]}$ which arises as the gluing of $n$ face maps in from $\Delta^{\op}_{/[1]}$ along $(n-1)$ shared instances of $\Delta^{\op} = \Delta^{\op}_{/[0]}$ in a chain.  Accordingly, a $\Lambda^{\op}_{/[n]}$--algebra in $\CatOf C^{\op}$ selects a family of $C_{i-1}$--$C_i$--cobimodule objects $M_i$ for $1 \le i \le n$: this is a \textit{length $n$ chain of compatible comodules}.  A \textit{length $n$ cotensor witness} is then a $\Delta^{\op}_{/[n]}$--algebra in $\CatOf C^{\op}$:
\begin{center}
\begin{tikzcd}
\Lambda^{\op}_{/[n]} \arrow{d} \arrow{rd} \\
\Delta^{\op}_{/[n]} \arrow[densely dotted]{r} & \CatOf C.
\end{tikzcd}
\end{center}
In particular, $\Delta^{\op}_{/[n]}$ has $\binom{n}{2}$ face maps $\Delta^{\op}_{/[1]} \to \Delta^{\op}_{/[n]}$, i.e., a $C_i$--$C_j$--cobimodule for every choice of $i$ and $j$.  A \textit{cotensor product} of the chain is the $C_0$--$C_n$--cobimodule extracted from a witness in this way.
\end{definition}

\begin{lemma}[{\cite[Lemma 4.19, Corollary 4.20]{Haugseng}, cf.\ \cite[Theorem 4.4.2.8]{Lurie}}]\label{CotensorExistenceLemma}
Let $P\co \Lambda^{\op}_{/[n]} \to \CatOf C^{\op}$ be a length $n$ chain of compatible cobimodules in $\CatOf C$, and let $F\co \Delta^{\op}_{/[n]} \to \CatOf C^{\op}$ be a filling of $P$ to a length $n$ tensor witness.  The spectrum underlying the cotensor product comodule witnessed by $F$ is necessarily weakly equivalent to the limit of the cobar construction on the compatible cobimodules.  Conversely, suppose that $n = 2$ and only $P$ is given.  If the natural $\CatOf C^{\op}$--map \[|X \otimes B(M; S; N) \otimes Y| \to X \otimes |B(M; S; N)| \otimes Y\] is an equivalence for all objects $X$ and $Y$, then there must exist a filler $F$. \qed
\end{lemma}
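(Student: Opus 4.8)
The plan is to read both halves off the general theory of operadic left Kan extensions for nonsymmetric $\infty$--operads, in the same way Lurie and Haugseng treat the associative two--sided bar construction, working throughout in $\CatOf C^{\op}$ so that cotensor products become relative tensor products and totalizations become geometric realizations.

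For the forward implication, recall that $\Lambda^{\op}_{/[n]} \hookrightarrow \Delta^{\op}_{/[n]}$ is a map of nonsymmetric $\infty$--operads, and that the $C_0$--$C_n$--cobimodule extracted from a cotensor witness $F$, having the universal property of the cotensor product, agrees with the operadic left Kan extension of the chain $P$ along this map. I would then apply the pointwise formula for operadic Kan extensions: the value on the long edge $[0 \to n]$ is the operadic colimit of $P$ pulled back to the slice $\infty$--category $\Lambda^{\op}_{/[n]} \times_{\Delta^{\op}_{/[n]}} (\Delta^{\op}_{/[n]})_{/[0 \to n]}$. The technical heart is to identify that slice, up to cofinality, with $\Delta^{\op}$ and the pulled--back diagram with the two--sided bar construction $B(M; S; N)$ on the compatible cobimodules ($S$ the middle coalgebra $C_1$, with $m$--simplices $M \otimes S^{\otimes m} \otimes N$), noting that the chain $P$ packages precisely the coactions and the comultiplication of $S$ needed to assemble this (co)simplicial object. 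Since an operadic colimit is in particular an ordinary colimit, the underlying object of the cotensor product is then $\colim_{\Delta^{\op}} B(M;S;N)$ computed in $\CatOf C^{\op}$, which, read back in $\CatOf C$, is exactly the limit over $\Delta$ of the cosimplicial cobar construction.

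For the converse with $n = 2$, the only operation of $\Delta^{\op}_{/[2]}$ absent from $\Lambda^{\op}_{/[2]}$ whose value must be constructed is the long edge $[0 \to 2]$, and the pointwise formula forces this value to be the operadic colimit of $B(M; S; N)$; so $F$ exists precisely when that operadic colimit exists. Now $\CatOf{Spectra}^{\op}$, and the opposite of any presentable $\infty$--category (in particular $\CatOf{Spectra}_E^{\op}$), is cocomplete, so the ordinary colimit $\colim_{\Delta^{\op}} B(M;S;N)$ always exists; and by the characterization of operadic colimits it is an \emph{operadic} colimit exactly when it is preserved by the operadic operations that tensor with an arbitrary object $X$ on the left and an arbitrary object $Y$ on the right --- i.e.\ exactly when the natural map $|X \otimes B(M;S;N) \otimes Y| \to X \otimes |B(M;S;N)| \otimes Y$ is an equivalence for all $X$ and $Y$. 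Under that hypothesis the operadic colimit, hence the operadic Kan extension, hence $F$, all exist (and the extension is automatically again a $\Delta^{\op}_{/[2]}$--algebra, since operadic Kan extensions of algebras are algebras).

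The main obstacle is the combinatorial identification in the forward half: matching the slice $\infty$--categories produced by the abstract pointwise formula with the concrete cosimplicial cobar object, including the cofinality statement collapsing the slice to $\Delta^{\op}$. This is the substance of Haugseng's Lemma 4.19 and Corollary 4.20 (which in turn dualize Lurie's Theorem 4.4.2.8); carrying it out from scratch would amount to reproving those results, so in the write--up I would cite them and merely spell out the dictionary $\CatOf C \rightsquigarrow \CatOf C^{\op}$ under which ``geometric realization'' is read as ``totalization.''
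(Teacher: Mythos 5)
The paper gives no argument of its own here: the statement ends with an immediate \(\qed\), signalling that it is a direct transport of Haugseng's Lemma 4.19 and Corollary 4.20 (themselves a dualization of Lurie's Theorem 4.4.2.8) to the opposite category, where tensor products become cotensor products and realizations become totalizations. Your sketch correctly reconstructs what sits behind that citation --- identifying the filler \(F\) with the operadic left Kan extension along \(\Lambda^{\op}_{/[n]} \hookrightarrow \Delta^{\op}_{/[n]}\), invoking the pointwise formula and the cofinality collapse of the relevant slice to \(\Delta^{\op}\), and, for the converse, using the characterization of operadic colimits as ordinary colimits preserved by two-sided tensoring --- and your decision to cite Haugseng for the slice/cofinality combinatorics rather than reprove them from scratch is precisely what the paper does, so this is the same route, merely unfolded one level.
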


\begin{remark}
Haugseng's filling result is not specific to cotensor products of length $2$, and it is functorial as the $\Lambda^{\op}_{/[n]}$--algebra varies.  However, we will be content with the weakened result stated here.
\end{remark}

The primary obstacle for us now is the natural equivalence hypothesis in the Lemma.  In our setting of $\CatOf C = \CatOf{Spectra}$ or $\CatOf C = \CatOf{Spectra}_\Gamma$, totalizations of cosimplicial objects \emph{do not} typically commute with the (local) smash product of (local) spectra.  So, while we can use the cobar construction to define a cotensor product and various maps concerning it, essentially no good properties can be deduced for the construction without first manually checking this condition about commuting limits and smash products.  For the moment, we content ourselves with constructing the maps, leaving the question of properties for the next section.

\begin{lemma}
For a right $C$--comodule $M$, there is a natural retraction \[M \to M \cotensor_C C \to M,\] and dually for a left $C$--comodule $N$. \qed
\end{lemma}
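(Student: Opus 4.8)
The plan is to build both legs of the claimed retraction directly from the cobar presentation of the cotensor product, using nothing about $\cotensor_C$ beyond the fact (as in \Cref{CotensorExistenceLemma}, and really just the discussion preceding it) that $M \cotensor_C C$ is presented as the limit of the cobar construction $\Cobar^\bullet(M; C; C)$: the cosimplicial spectrum whose term in degree $n$ is $M \sm C^{\sm(n+1)}$, with cofaces assembled from the coaction $\psi_M \co M \to M \sm C$ and the comultiplication $\Delta \co C \to C \sm C$ and codegeneracies induced by the counit $\eps \co C \to \S$. In particular I would not try to show that the evident classical splitting of this cosimplicial object upgrades coherently enough to force $M \cotensor_C C \simeq M$; only a retraction is extracted, which is all that is needed at this ``constructing the maps'' stage.

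The map $M \cotensor_C C \to M$ is immediate: postcompose the canonical map out of the limit to the degree--$0$ term, $M \cotensor_C C \to M \sm C$, with $\mathrm{id}_M \sm \eps \co M \sm C \to M \sm \S \simeq M$. For the map $M \to M \cotensor_C C$ I would promote the classical cone---the one whose degree--$0$ component is $\psi_M$ and which equalizes the two cofaces $M \sm C \rightrightarrows M \sm C \sm C$ by coassociativity of $\psi_M$---to an honest morphism of cosimplicial spectra from the constant diagram on $M$ into $\Cobar^\bullet(M; C; C)$. The coherences required are exactly those carried by the right $C$--comodule structure on $M$, i.e.\ by its structure as an algebra in $\CatOf{C}^{\op}$ over Haugseng's bimodule operad $\Delta^{\op}_{/[1]}$ with both boundary algebras equal to $C$; unwinding those operadic structure maps produces an augmented cosimplicial spectrum $M \to \Cobar^\bullet(M; C; C)$, and passing to limits gives $M \simeq \lim(\mathrm{const}\, M) \to M \cotensor_C C$.

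By construction the composite $M \to M \cotensor_C C \to M$ agrees with $M \xrightarrow{\psi_M} M \sm C \xrightarrow{\mathrm{id}_M \sm \eps} M \sm \S \simeq M$---the first arrow being the degree--$0$ component of the cone and the rest being evaluation in degree $0$ followed by the counit---so the counit axiom for the comodule $M$ supplies the homotopy identifying this composite with $\mathrm{id}_M$, completing the retraction. The statement for a left $C$--comodule $N$ follows by the mirror argument, using $C \cotensor_C N$, the left coaction, and the counit applied to the left-hand copy of $C$. I expect the only real work to be the middle step: turning the $\Delta^{\op}_{/[1]}$--algebra datum on $M$ into a genuine augmented cosimplicial object---rather than a levelwise family of maps compatible with the structure operators only up to unspecified coherence---is a routine but fiddly diagram chase through the operads of \cite{Haugseng} and \cite{Lurie}, after which everything is formal.
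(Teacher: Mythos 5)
Your construction is correct and is exactly the formal argument the paper leaves implicit (the lemma is stated with no written proof): using the cobar presentation of $M \cotensor_C C$, the coaugmentation built from $\psi_M$ gives the map in, projection to the degree--$0$ term followed by $\mathrm{id}_M \sm \eps$ gives the map out, and the counit axiom identifies the composite with $\mathrm{id}_M$. Your caution is also well placed: only the retraction is claimed at this stage (in fact the extra codegeneracy would even split the cosimplicial object, but nothing more is needed here), and the only genuine work is, as you say, packaging the comodule coherences into an honest coaugmented cosimplicial object.
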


\begin{lemma}\label{DeltaFactorsThroughCoideals}
Let $M$ be a $C$--$C$--cobimodule equipped with a map $\pi\co C \to M$ of cobimodules.  There is then a diagram
\begin{center}
\begin{tikzcd}
C \arrow["\pi"]{r} \arrow["\Delta_C"]{d} & M \arrow["\psi_R"]{d} \arrow["\widetilde \Delta_R"]{rd} \\
C \cotensor_C C \arrow["\pi \cotensor_C C"]{r} \arrow{d} & M \cotensor_C C \arrow["M \cotensor_C \pi"]{r} \arrow{d} & M \cotensor_C M \\
C \arrow{r} & M
\end{tikzcd}
\end{center}
where the vertical sequences are retractions.  Alternatively, there is also a map $\widetilde \Delta_L$ using the left $C$--comodule structure of $M$.
\end{lemma}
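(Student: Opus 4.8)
The plan is to construct every object and arrow in the diagram directly from cobar constructions, as in the discussion preceding this statement, and then to read off the two commutativities from the comodule axioms. First I would fix the convention that, for the duration of this proof, $C \cotensor_C C$, $M \cotensor_C C$ and $M \cotensor_C M$ denote the limits of the cobar constructions $B(C;C;C)$, $B(M;C;C)$ and $B(M;C;M)$ respectively (cf.\ \Cref{CotensorExistenceLemma}), where in each case the leftmost factor carries a right $C$--comodule structure and the rightmost factor a left $C$--comodule structure; the bicomodule $M$ contributes its right comodule structure in a left slot and its left comodule structure in a right slot. Since these limits are functorial in each comodule argument, the bicomodule map $\pi\co C \to M$ induces the horizontal arrows: $\pi \cotensor_C C$ by inserting $\pi$ into the left slot as a map of right $C$--comodules, and $M \cotensor_C \pi$ by inserting $\pi$ into the right slot as a map of left $C$--comodules.

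Next I would identify the vertical arrows with structure already in hand. The maps $\Delta_C\co C \to C \cotensor_C C$ and $\psi_R\co M \to M \cotensor_C C$ are the ``unit'' halves of the retractions furnished by the preceding Lemma, applied to the right $C$--comodules $C$ and $M$; concretely each is induced by the corresponding right coaction. The assertion that the two vertical composites equal the identity is then precisely the retraction statement of that Lemma (counitality of the coactions), and the lower square of the diagram commutes by naturality of the ``counit'' half $- \cotensor_C C \Rightarrow \mathrm{id}$ along $\pi$. The upper square is the naturality square of the ``unit'' transformation $\mathrm{id} \Rightarrow - \cotensor_C C$ along the right $C$--comodule morphism $\pi$ --- in elementary terms, the statement that $\pi$ intertwines the right coactions on $C$ and on $M$. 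Finally I would \emph{define} $\widetilde\Delta_R := (M \cotensor_C \pi)\circ\psi_R$, so that the triangle commutes tautologically, and obtain $\widetilde\Delta_L$ by the mirror construction: use the left coaction $\psi_L\co M \to C \cotensor_C M$ together with $\pi \cotensor_C M\co C \cotensor_C M \to M \cotensor_C M$, and set $\widetilde\Delta_L := (\pi \cotensor_C M)\circ\psi_L$; equivalently, rerun the argument with the linear order indexing $\Lambda^{\op}_{/[2]}$ reversed.

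The step that I expect to require the most care is homotopy coherence. For the squares and triangle above to commute as coherent data in $\CatOf{Spectra}$ (or $\CatOf{Spectra}_\Gamma$) rather than merely on homotopy categories, I need the retractions of the preceding Lemma to assemble into genuine natural transformations of functors on the $\infty$--category of right $C$--comodules, and the cobar-limit to be functorial in its comodule slots as a map of diagrams. I would secure both by staying inside Haugseng's operadic bookkeeping: the cotensor products, the canonical retractions, and their functoriality are all extracted by restriction along maps of nonsymmetric $\infty$--operads among $\Delta^{\op}_{/[n]}$ and $\Lambda^{\op}_{/[n]}$, so the necessary coherences come for free. It is worth noting that none of this uses the \emph{existence} of a filler in the sense of \Cref{CotensorExistenceLemma}: we are only producing maps between the spectra underlying these cotensor products, and any upgrade to maps of comodules --- together with the commutation of limits and smash products on which it rests --- is deferred to the following section.
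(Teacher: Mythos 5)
Your proof is correct and takes essentially the same approach as the paper's: identify $\Delta_C$ with the right coaction of $C$ on itself (the paper phrases this as ``$\Delta_C$ could just as well be called $\psi_C$''), observe that the top square is then the naturality/intertwining square for the cobimodule map $\pi$, and define $\widetilde\Delta_R$ as the resulting composite. The paper's proof is a two-sentence version of your argument; your added remarks on the lower square, the retraction lemma, and Haugseng's operadic bookkeeping for homotopy coherence are all sound elaborations of what the paper leaves implicit.
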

\begin{proof}
The arrow $\Delta_C$ could just as well be called $\psi_C$, and by this name the top square commutes because $\pi$ is a homomorphism of cobimodules.  The arrow $\widetilde \Delta_R$ is then defined by either composite.
\end{proof}

Classically, if $\pi$ is surjective, then $\Delta_R$ and $\Delta_L$ will agree on points exactly if $M$ is taken to be a coideal.  The word ``coideal'' is too bold in our homotopical context, since it suggests that the quotient is again a coalgebra.  We instead adopt the following terminology:

\begin{definition}
Let $M$ be a $C$--$C$--cobimodule spectrum equipped with a cobimodule map $\pi\co C \to M$.  Such a cobimodule spectrum is a \textit{symmetric cobimodule} (for $C$) when $\widetilde \Delta_R$ and $\widetilde \Delta_L$ are homotopic, in which case we will use $\widetilde \Delta$ as an abbreviation for either.\footnote{This is \emph{not} the robust definition that a party setting out to build up a theory of spectral coalgebraic algebra would choose.  They would probably prefer to include coherence conditions on this symmetry.  We, however, are only setting out to support \Cref{CotangentSpectrumDefn}.}
\end{definition}

\begin{lemma}
Let $f\co D \to C$ be a map of coalgebras, and suppose that there is a splitting of the cofiber:
\begin{center}
\begin{tikzcd}
D \arrow["f"]{r} & C \arrow["\pi"]{r} & \cofib f \arrow[bend right, densely dotted]{l}.
\end{tikzcd}
\end{center}
The cofiber of $f$, considered as a $C$--$C$--cobimodule, is then a symmetric cobimodule.
\end{lemma}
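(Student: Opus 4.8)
The plan is to reduce the statement to cancelling the projection $\pi\co C \to M := \cofib f$ against the given section, after checking that $\widetilde\Delta_R$ and $\widetilde\Delta_L$ already agree once precomposed with $\pi$.

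First I would fix the cobimodule data. A coalgebra map $f\co D \to C$ lets us corestrict scalars, so that the regular $D$--$D$--cobimodule $D$ becomes a $C$--$C$--cobimodule by pushing its coactions forward along $f$, and the identity $\Delta_C\circ f \simeq (f\sm f)\circ\Delta_D$ says exactly that $f$ is a map of $C$--cobimodules. We may therefore form $M = \cofib f$ inside the $\infty$--category of $C$--cobimodules; since this is a category of modules in $\CatOf{Spectra}^{\op}$ and the forgetful functor from a module category creates limits, a colimit of $C$--comodules is a limit in $\CatOf{Spectra}^{\op}$ and so is computed on underlying spectra. Thus $M$ has a natural $C$--$C$--cobimodule structure with underlying spectrum $\cofib f$, and the canonical map $\pi\co C \to M$ is a cobimodule map --- precisely the input required by \Cref{DeltaFactorsThroughCoideals} and the definition of a symmetric cobimodule.

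Next I would run \Cref{DeltaFactorsThroughCoideals} and its left--handed mirror against this $\pi$. The Lemma yields $\widetilde\Delta_R\circ\pi \simeq (M\cotensor_C \pi)\circ(\pi\cotensor_C C)\circ\Delta_C$, and the mirror yields $\widetilde\Delta_L\circ\pi \simeq (\pi\cotensor_C M)\circ(C\cotensor_C \pi)\circ\Delta_C$. These two composites agree: the left and right $C$--coactions on $C$ itself are literally the same map $\Delta_C\co C\to C\cotensor_C C$, so both begin with $\Delta_C$ and then apply $\pi$ to the two tensor slots of $C\cotensor_C C$ in the two possible orders --- and since $\cotensor_C$ is a functor of two variables, the square comparing these orders commutes, so both composites are canonically $(\pi\cotensor_C\pi)\circ\Delta_C$. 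Hence $\widetilde\Delta_R\circ\pi \simeq \widetilde\Delta_L\circ\pi$. Finally, whiskering with the hypothesized section $s\co M\to C$ and using $\pi\circ s\simeq\mathrm{id}_M$ gives $\widetilde\Delta_R \simeq \widetilde\Delta_R\circ\pi\circ s \simeq \widetilde\Delta_L\circ\pi\circ s \simeq \widetilde\Delta_L$, so $M$ is symmetric.

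The argument is essentially formal, so there is no single hard step; the parts that want care are the bookkeeping in the second paragraph --- that the cofiber of cobimodules really is the cofiber of underlying spectra, so that the construction matches the ambient definitions --- and the observation that the mirror of \Cref{DeltaFactorsThroughCoideals} lands on the \emph{same} composite, which works because $C$ coacts on $C$ through $\Delta_C$ from either side and therefore no cocommutativity of $C$ is invoked. Because the notion of a symmetric cobimodule asks only for the existence of a homotopy $\widetilde\Delta_R\simeq\widetilde\Delta_L$, no higher coherence needs to be tracked, and the splitting is used exactly once, to pass from agreement-after-$\pi$ to agreement on $M$.
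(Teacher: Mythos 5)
Your proof is correct and follows essentially the same route as the paper: show that $\widetilde\Delta_R$ and $\widetilde\Delta_L$ agree after precomposition with $\pi$ (both being $(\pi\cotensor_C\pi)\circ\Delta_C$, since $\pi$ is a cobimodule map), and then cancel $\pi$ using the splitting. The extra bookkeeping in your second paragraph about how $\cofib f$ acquires its cobimodule structure is left implicit in the paper's statement, but adds nothing at odds with its argument.
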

\begin{proof}
We enlarge the diagram of \Cref{DeltaFactorsThroughCoideals} to include $\widetilde \Delta_L$ and $\widetilde \Delta_R$:
\begin{center}
\begin{tikzcd}[row sep=2em]
D \arrow["f"]{r} & C \arrow["\pi"]{rr} \arrow["\Delta"]{dd} & & \cofib f \arrow[bend right, densely dotted]{ll} \arrow["\psi_R"]{dl} \arrow[bend left, "\widetilde \Delta_L"]{d} \arrow[bend right, "\widetilde \Delta_R"']{d} \arrow["\psi_L"]{dr} \\
& & \cofib f \sm C \arrow["1 \sm \pi"]{r} & \cofib f \sm \cofib f & C \sm \cofib f \arrow["\pi \sm 1"]{l} \\
& C \sm C \arrow["\pi \sm 1"]{ru} \arrow["\pi \sm \pi"]{rru} \arrow["1 \sm \pi", bend right=10]{rrru}.
\end{tikzcd}
\end{center}
We see that the two maps $\tilde \Delta_L$ and $\tilde \Delta_R$ agree upon precomposition to $C$.  Since $\cofib f$ splits off of $C$, they agree on $\cofib f$ as well.
\end{proof}

\begin{lemma}
For $M$ a symmetric cobimodule admitting cotensor powers $M^{\cotensor_C k}$ and $M^{\cotensor_C (k+1)}$, all of the following maps are homotopic:
\[M^{\cotensor_C k} \xrightarrow{1 \cotensor \cdots \cotensor 1 \cotensor \tilde \Delta \cotensor 1 \cotensor \cdots \cotensor 1} M^{\cotensor_C (k+1)}.\]
\end{lemma}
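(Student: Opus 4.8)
The plan is to isolate a single coassociativity identity and then propagate it by transitivity. Write $f_i \co M^{\cotensor_C k} \to M^{\cotensor_C (k+1)}$ for the map obtained by inserting $\widetilde\Delta$ into the $i${\th} factor. Since $\widetilde\Delta_L \simeq \widetilde\Delta_R$ by the hypothesis that $M$ is symmetric, $f_i$ is well defined up to homotopy regardless of which representative of $\widetilde\Delta$ we feed it; moreover $\psi_R \co M \to M \cotensor_C C$ and $\psi_L \co M \to C \cotensor_C M$ are each maps of left \emph{and} of right $C$--comodules (coassociativity gives compatibility with the coaction on the same side, the bicomodule axiom gives compatibility with the coaction on the other side), so $\widetilde\Delta_R$ and $\widetilde\Delta_L$ are $C$--comodule maps on either side and the cotensor insertion $1 \cotensor \cdots \cotensor \widetilde\Delta \cotensor \cdots \cotensor 1$ is legitimate in every slot. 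As ``homotopic'' is an equivalence relation, it suffices to produce a homotopy $f_i \simeq f_{i+1}$ for each $1 \le i \le k-1$.

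To compare $f_i$ with $f_{i+1}$, I would represent $f_i$ by $\widetilde\Delta_R = (M \cotensor_C \pi) \circ \psi_R$ acting on the $i${\th} factor and $f_{i+1}$ by $\widetilde\Delta_L = (\pi \cotensor_C M) \circ \psi_L$ acting on the $(i+1)${\th} factor. Using functoriality of $\cotensor_C$ in $C$--comodule maps, each of $f_i$ and $f_{i+1}$ then factors as an ``interior expansion'' $M^{\cotensor_C k} \to M_1 \cotensor_C \cdots \cotensor_C M_i \cotensor_C C \cotensor_C M_{i+1} \cotensor_C \cdots \cotensor_C M_k$, which splices a copy of $C$ between the $i${\th} and $(i+1)${\th} slots, followed by the map $\pi \co C \to M$ applied in that interior slot (with identities elsewhere). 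The two second halves agree by coherent associativity of $\cotensor_C$, since each is ``$\pi$ in the spliced slot''; the two first halves are $1 \cotensor \cdots \cotensor \psi_R \cotensor \cdots \cotensor 1$ and $1 \cotensor \cdots \cotensor \psi_L \cotensor \cdots \cotensor 1$, coacting out of $M_i$ to the right and out of $M_{i+1}$ to the left respectively.

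The crux is that these two expansion maps coincide, and this is forced by the construction of the cotensor powers. By \Cref{CotensorExistenceLemma}, $M^{\cotensor_C k}$ is (the spectrum underlying) the limit---the totalization---of the cobar construction performed in the interior $C$ sitting between the $i${\th} and $(i+1)${\th} factors; the two maps in question are exactly the two cofaces $B^0 \rightrightarrows B^1$ of that cosimplicial object, and any two cofaces out of a cosimplicial object become equal after restriction along the canonical map from its totalization. Hence $f_i \simeq f_{i+1}$, and induction on $i$ finishes the argument. The genuine obstacle is organizational rather than conceptual: one must track which comodule structure is in play on each factor, confirm that $\widetilde\Delta_R$ and $\widetilde\Delta_L$ really are comodule maps on both sides so that they may be cotensored into an arbitrary slot, and invoke the functoriality and coherent associativity of $\cotensor_C$ established in the cited work of Haugseng and Lurie; once that apparatus is granted, the identity itself is immediate from the universal property of a limit of a cosimplicial diagram.
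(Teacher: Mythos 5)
Your argument is correct and is essentially the paper's own proof: the key homotopy you extract from the cosimplicial structure (the two cofaces $\psi_R$ out of slot $i$ and $\psi_L$ out of slot $i+1$ becoming homotopic after precomposition with the projection from the totalization) is exactly what the paper means by ``the definition of the cobar construction gives a homotopy,'' and your use of the symmetric cobimodule property to trade $\widetilde\Delta_L$ for $\widetilde\Delta_R$ and then chain across adjacent coordinates matches the paper's argument step for step. No substantive difference or gap to report.
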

\begin{proof}
The definition of the cobar construction gives a homotopy between the following two maps:
\begin{center}
\begin{tikzcd}
M \cotensor_C M \arrow[shift left=0.5\baselineskip, "\psi_R \cotensor_C 1"]{r} \arrow[shift right=0.5\baselineskip, "1 \cotensor_C \psi_L"]{r} & M \cotensor_C C \cotensor_C M.
\end{tikzcd}
\end{center}
Using the symmetric cobimodule property of $M$, we can trade $\tilde \Delta_L$ for $\tilde \Delta_R$, and hence we can transfer the $\tilde \Delta$ to any coordinate.
\end{proof}

We are now poised to properly state our construction:
\begin{definition}\label{CotangentSpectrumDefn}
Given a pointed coalgebra spectrum $\eta\co \S \to C$, we define $C\range 11$ by the following chain of fiber sequences:
\begin{center}
\begin{tikzcd}
C \arrow{r} & M \arrow["\widetilde \Delta"]{r} & M \cotensor_C M \\
\S \arrow["\eta"]{u} & C\range 11 \arrow{u}.
\end{tikzcd}
\end{center}
\end{definition}

\begin{remark}
If $C$ is merely a pointed coalgebra spectrum, this is as much of the diagram as we can form at this time.  After all, we have not yet shown that $M \cotensor_C M$ is a $C$--$C$--cobimodule, and hence we cannot guarantee the existence of ``$M \cotensor_C (M \cotensor_C M)$''.  However, if we start with a space $X$ and follow \Cref{SpacesAreCoalgebras}, then we can form the tower of iterated cobar diagrams in the $1$--category of spaces:
\begin{center}
\begin{tikzcd}
X_+ \arrow{r} & X \arrow{r} & \Omega(X; X_+; X) \arrow{r} & \Omega(X; X_+; X; X_+; X) \arrow{r} & \cdots.
\end{tikzcd}
\end{center}
Pushing these diagrams forward along $\Susp^\infty$ and along the localizer $L_\Gamma$ then produces inverse diagrams of ($\Gamma$--local) spectra.  Thus, we are at least assured that, for $C = \Susp^\infty_+ X$, the diagrams \emph{determining} the higher cotensor powers (and hence the annular tower) are well-defined.  We are still obligated to justify their utility by computing the value of $E_\Gamma$ on their limits.
\end{remark}






\subsection{Computational tools}\label{ComputationalToolsSection}

We would like to justify \Cref{CotangentSpectrumDefn} above by calculating $E_\Gamma^0 C\range 11$ and checking that it gives the desired cotangent module, and then we would also like to extend it to the full tower presented in the main Theorem.  In view of \Cref{CotensorExistenceLemma}, this rests directly on having tools available to compute the homology and cohomology of a cotensor product of comodule spectra.  With a computational task ahead, it is now convenient to introduce Morava $K$--theory.

\begin{definition}
The coefficient ring $\pi_0 E_\Gamma$ is a power series ring over $\W_p(k)$, itself a complete local ring with maximal ideal generated by $p$.  The \textit{Morava $K$--theory spectrum}, $K_\Gamma$, is the quotient of $E_\Gamma$ in $E_\Gamma$--modules by a generating regular sequence of the maximal ideal of $\pi_0 E_\Gamma$.
\end{definition}

\noindent As a result, $\pi_* K_\Gamma = k[u^\pm]$ is a graded field.  This makes it extremely valuable for computations---especially ones such as ours, where large smash products arise, since $K_\Gamma$ has K\"unneth isomorphisms.  Remarkably, despite being defined as a quotient, ``$K_\Gamma$ carries the same information as $E_\Gamma$'' in the precise sense that $K_\Gamma$--acyclics, homological or cohomological, agree with cohomological $E_\Gamma$--acyclics~\cite[Proposition 2.5]{HoveyStrickland}.  Hence, any of $E_\Gamma$--cohomology, $K_\Gamma$--cohomology, or $K_\Gamma$--homology are equally good for testing $\Gamma$--local equivalences.\footnote{Remarkably, $E_\Gamma$--homology does not belong to this list, essentially for the same reason that $\Z_p \otimes_{\Z} \Z_p$ is not $p$--complete.  We will address this further in \Cref{DefnCtsMoravaEThy}.}  We will favor the last option, both for technical reasons (cf.\ \cite{BarthelBeaudryPeterson} and \Cref{LimitAppendix}) and because it covariantly converts the coalgebraic spectrum $C$ into the $(K_\Gamma)_*$--coalgebra $(K_\Gamma)_* C$, then into the coalgebraic formal scheme $\Sch (K_\Gamma)_* C$.  To save on notational overhead, we will often write ``$K$'' alone when some fixed $\Gamma$ is understood.

We now turn back to the task at hand: given $C$--$C$--cobimodules $M$ and $N$, we want to study the natural map \[X \sm \lim_{\CatOf C} \Omega(M; C; N) \sm Y \to \lim_{\CatOf C} \left( X \sm \Omega(M; C; N) \sm Y \right)\] and to find conditions under which it becomes a $K$--homology equivalence.  The presentation of $M \cotensor N$ as the totalization of a cosimplicial object equips it with a coskeletal filtration and hence a spectral sequence approximating its homology.  We might hope to use this to gain a foothold on our problem, but the homology of an inverse limit of spectra typically compares poorly with the inverse limit of the homologies of the individual spectra in the system.  Remarkably, a result of Sadofsky shows that $K$--homology does not suffer too badly from this.

\begin{theorem}[{cf.\ \Cref{SadofskyInverseLimits}, \cite{BarthelBeaudryPeterson}}]\label{SadofskysTheoremInCotangentSection}
Take $p \gg \height \Gamma$, and let $\{X_\alpha\}_\alpha$ be a pro-system of $\Gamma$--local spectra such that $\{K_* X_\alpha\}_\alpha$ is a Mittag-Leffler system of $K_*$--modules.  There is then a convergent spectral sequence of signature \[R^* \mylim{\CatOf{Comod}_{K_* K}}\alpha \{K_* X_\alpha\}_\alpha \Rightarrow K_* \mylim{\CatOf{Spectra}_\Gamma}\alpha \{X_\alpha\}_\alpha,\] where the right--derived inverse limit forming the input to the spectral sequence is taken in the category of $K_* K$--comodules. \qed
\end{theorem}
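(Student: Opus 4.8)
The plan is to exhibit the spectral sequence as the totalization spectral sequence attached to a $K$--based cobar resolution of the entire pro--system, and then to read off its $E_2$--page by combining the K\"unneth formula for $K$--modules with the collapse of Milnor's $\lim^1$--sequence. First I would note that $\CatOf{Spectra}_\Gamma$, being reflective in $\CatOf{Spectra}$, is closed under limits, so $\mylim{\CatOf{Spectra}_\Gamma}\alpha\{X_\alpha\}_\alpha$ is the underlying homotopy limit of spectra, and the only obstruction to computing its $K$--homology is that $K$--homology does not commute with this limit. To get traction I would replace each $X_\alpha$ by its $K$--nilpotent completion $\Tot(K^{\sm\bullet+1}\sm X_\alpha)$ --- which is harmless, since each level $K^{\sm s+1}\sm X_\alpha$ is a $K$--module, hence $\Gamma$--local, and $\Gamma$--local spectra are both closed under limits and $K$--nilpotent complete --- and interchange the homotopy limit over $\alpha$ with the totalization:
\[ Y \;:=\; \mylim{\CatOf{Spectra}_\Gamma}\alpha\{X_\alpha\}_\alpha \;\simeq\; \Tot\big( \mylim{\CatOf{Spectra}_\Gamma}\alpha\{K^{\sm\bullet+1}\sm X_\alpha\}_\alpha \big). \]
Smashing the totalization tower on $Y$ with $K$ and passing to homotopy then yields a conditionally convergent spectral sequence whose $E_1$--page in cosimplicial degree $s$ is the $K$--homology of the layer in that degree, built from $\mylim{\CatOf{Spectra}_\Gamma}\alpha\{K^{\sm s+1}\sm X_\alpha\}_\alpha$.

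The crux is the identification of this $E_1$--page. For each $s$ the spectrum $\mylim{\CatOf{Spectra}_\Gamma}\alpha\{K^{\sm s+1}\sm X_\alpha\}_\alpha$ is a homotopy limit of $K$--modules, hence itself a $K$--module, and its homotopy groups are $\lim_\alpha\{(K_*K)^{\otimes s}\otimes_{K_*}K_*X_\alpha\}_\alpha$ (tensor powers over $K_*$), the homotopy--limit spectral sequence collapsing because the Mittag--Leffler hypothesis on $\{K_*X_\alpha\}_\alpha$ --- inherited by $\{(K_*K)^{\otimes s}\otimes_{K_*}K_*X_\alpha\}_\alpha$, since base change of a Mittag--Leffler system along the free $K_*$--module $(K_*K)^{\otimes s}$ stays Mittag--Leffler --- kills its derived inverse limits in positive degrees. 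The K\"unneth isomorphism $K_*N\cong K_*K\otimes_{K_*}\pi_*N$ for $K$--modules then gives
\[ E_1^s \;\cong\; K_*K\otimes_{K_*}\lim_\alpha\big\{(K_*K)^{\otimes s}\otimes_{K_*}K_*X_\alpha\big\}_\alpha, \]
with coaction from the outer $K_*K$--factor. This is exactly the value in cosimplicial degree $s$ of the complex obtained by applying $\mylim{\CatOf{Comod}_{K_*K}}\alpha$ to the cobar resolution of the comodule $K_*X_\alpha$ by the cofree comodules $(K_*K)^{\otimes(s+1)}\otimes_{K_*}K_*X_\alpha$: the cofree functor, being right adjoint to the forgetful functor, commutes with inverse limits, and cofree comodules built from Mittag--Leffler systems are acyclic for the inverse limit in $\CatOf{Comod}_{K_*K}$. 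Taking cohomology in $s$ therefore computes $R^*\mylim{\CatOf{Comod}_{K_*K}}\alpha\{K_*X_\alpha\}_\alpha$, the promised $E_2$--page. I would stress that this genuinely differs from the derived inverse limit of the underlying $K_*K$--modules --- since $K_*K\otimes_{K_*}(-)$ does not commute with the pro--limit --- and that this discrepancy is precisely the failure of $K$--homology to commute with homotopy limits, now tamed and repackaged as a derived limit in comodules.

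Finally I would promote conditional to strong convergence, and here the hypothesis $p\gg\height\Gamma$ is what is used: in that range $\Aut\Gamma$ has finite $p$--cohomological dimension, so $\CatOf{Comod}_{K_*K}$ has finite cohomological dimension, $R^s\mylim{\CatOf{Comod}_{K_*K}}\alpha$ vanishes for $s$ large, and the spectral sequence --- concentrated in finitely many columns --- converges strongly; the same finiteness makes the totalization tower essentially finite, which is what lets one carry the smash with $K$ past it. The main obstacle is the $E_1$--identification above: the temptation to commute $K$--homology past the pro--limit must be resisted, and one must instead compute $K_*$ of each resolution level through its $K$--module structure via K\"unneth and Milnor, then recognize the resulting cosimplicial object as the cofree--comodule resolution underlying the comodule inverse limit --- along with the acyclicity bookkeeping that makes both the $E_2$--identification and the finite--column convergence go through. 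These verifications are the content of \Cref{LimitAppendix}; compare \cite{BarthelBeaudryPeterson}.
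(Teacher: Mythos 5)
Your proposal takes a genuinely different route from the paper's, and while the $E_1$--page identification is sound, the convergence argument has a gap.

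The paper's proof in \Cref{LimitAppendix} never runs the $K$--based Adams tower directly. Instead it constructs the spectral sequence from the $E(d)$--Adams tower (\Cref{DefnHopkinsInverseSSeq}), obtains strong convergence because the $E(d)$--local sphere is $E(d)$--nilpotent (\Cref{SphereIsPrenilpotent}) so the tower is \emph{weakly equivalent to a finite one} (\Cref{FiniteEnResolutions}), and only then passes to $K(d)$--homology by smashing the pro-system with a Smith--Toda complex $V(d-1)$, whose existence (\Cref{ExistenceOfSTComplexes}) is exactly where $p \gg d$ enters. Your $E_1$--identification parallels this faithfully: your K\"unneth formula $K_* N \cong K_* K \otimes_{K_*} \pi_* N$ for $K$--modules is the $K$--analogue of \Cref{EHomologyOfModules}, your base-change-preserves-Mittag--Leffler observation stands in for \Cref{EnIsEasyForModules}, and your recognition of the cofree resolution is \Cref{ProducingFlasqueResns} and \Cref{CobarComputesDerivedLimit}. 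That part is a correct and parallel argument, just transported to $K$.

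The gap is in the convergence step, and it is precisely the spot where the author inserts a cautionary footnote (``For a while, the author thought that because $K(d)$ admits a finite resolution in $E(d)$--module spectra, this proof would go through at all primes, but it seems to be a dead end.''). Two claims you make are unjustified. First, that $\Gamma$--local spectra equal their $K$--nilpotent completions: Bousfield's theorem identifies nilpotent completion with localization only for smashing localizations, and $L_{K(d)}$ is not smashing, so this needs a separate argument (it is true, but not for free). Second, and more seriously, that finite $p$--cohomological dimension of $\Aut\Gamma$ makes $\CatOf{Comod}_{K_*K}$ have finite cohomological dimension, which in turn makes $R^s\mylim{\CatOf{Comod}_{K_*K}}{\alpha}$ vanish for large $s$. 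The cohomological dimension of the comodule category bounds $\Cotor$ (i.e.\ $\Ext$), not derived inverse limits along a pro-system; these are genuinely different derived functors, as the existence of \Cref{ProducingFlasqueResns} and its flasqueness hypothesis underscores. The paper never claims such a bound; it gets its finite band from a \emph{topological} finiteness (the length-$d^2$ $E(d)$--Adams resolution of the $E(d)$--local sphere), and then avoids having to prove the analogous finiteness for the $K$--Adams tower entirely by routing through $V(d-1)$. Your approach, if completed, would need an honest proof that the $K$--Adams tower over the $\Gamma$--local category stabilizes (a horizontal vanishing line), or a genuine algebraic bound on $R^s\lim$ in pro-comodules; neither is supplied, and the paper's route is specifically chosen to sidestep both.
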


In order to make use of this spectral sequence, we need to compute some of its inputs.  The homologies of the finite stages of the filtration are accessible because homology does pass through \emph{finite} limits, and these admit the following uniform description.

\begin{theorem}\label{NaiveCotorSSeq}
Let $C$ be a coalgebra spectrum, $M$ a right $C$--comodule spectrum, and $N$ a left $C$--comodule spectrum.  Writing $F$ for the fiber of the counit map $\eps\co C \to \S$, there is an $n$--indexed system of spectral sequences: \[\left. \begin{array}{c} E^1_{*, *} \cong K_* M \otimes_{K_*} (K_* F)^{\otimes (* \le n)} \otimes_{K_*} K_* N, \\ E^2_{*, (* < n)} \cong \Cotor^{K_* C}_{*, *}(K_* M, K_* N)\end{array} \right\} \Rightarrow K_* \Tot_n \Cobar(M; C; N)\] whose inverse limit in the category of $K_*$--modules is the spectral sequence \[\left. \begin{array}{c} E^1_{*, *} \cong K_* M \otimes_{K_*} (K_* F)^{\otimes *} \otimes_{K_*} K_* N, \\ E^2_{*, *} \cong \Cotor^{K_* C}_{*, *}(K_* M, K_* N)\end{array} \right\} \Rightarrow \mylim{\CatOf{Modules}_{K_*}}{n} K_* \Tot_n \Cobar(M; C; N).\]  The spectral sequences in the system are strongly convergent, and the full spectral sequence is conditionally convergent.
\end{theorem}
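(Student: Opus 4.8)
The plan is to extract the $n${\th} spectral sequence from the Bousfield--Kan tower of partial totalizations of the cobar construction, and then to obtain the full spectral sequence by forming the inverse limit over $n$ one page at a time. Recall that $\Cobar(M;C;N)$ is the cosimplicial spectrum with $s$--cosimplices $M \sm C^{\sm s} \sm N$, whose cofaces are built from the comultiplication on $C$ and the coactions on $M$ and $N$, and whose codegeneracies are built from the counit $\eps \co C \to \S$. Its partial totalizations form a finite tower
\[\Tot_n \Cobar(M;C;N) \to \Tot_{n-1}\Cobar(M;C;N) \to \cdots \to \Tot_0 \Cobar(M;C;N) = M \sm N,\]
in which the fiber of $\Tot_s \to \Tot_{s-1}$ is $\Susp^{-s}$ of the $s${\th} conormalization. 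Since the codegeneracies involve only $\eps$, this conormalization is the total fiber of the evident $s$--cube of copies of $\eps$, namely $M \sm F^{\sm s} \sm N$ with $F = \fib(\eps)$; when $C$ is coaugmented --- the case we actually use --- this is a genuine normalization and $K_* F$ is the coaugmentation coideal $\ker(\eps_* \co K_* C \to K_*)$.

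Smashing the whole tower with $K$ preserves these fiber sequences, so we obtain a finite tower of spectra whose $s${\th} fiber is $\Susp^{-s}(K \sm M \sm F^{\sm s} \sm N)$; its associated homotopy spectral sequence is the $n${\th} spectral sequence of the statement. The Künneth isomorphism for $K$ --- available since $K_*$ is a graded field --- identifies the $E^1$--term in filtration $s \le n$ with $K_* M \otimes_{K_*}(K_* F)^{\otimes s}\otimes_{K_*} K_* N$ and makes it vanish for $s > n$; the same isomorphism equips $K_* C$ with the structure of a $K_*$--coalgebra and $K_* M$, $K_* N$ with comodule structures over it. The $d^1$--differential of a Bousfield--Kan tower is the alternating sum of the cofaces, so the $E^1$--complex is precisely the truncation at level $n$ --- with the differential leaving the top discarded --- of the normalized cobar complex of the $K_* C$--comodules $K_* M$ and $K_* N$. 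Passing to $d^1$--cohomology therefore gives $\Cotor^{K_* C}_{s,*}(K_* M, K_* N)$ in each filtration $s < n$, the truncation perturbing only the top filtration, where one finds a cokernel in place of $\Cotor$. Because this tower is finite, the spectral sequence carries a finite filtration, hence is strongly convergent, with abutment $K_* \Tot_n \Cobar(M;C;N)$.

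Finally, the maps $\Tot_n \to \Tot_{n-1}$ are maps of towers and so induce a compatible family of maps of spectral sequences; we form the inverse limit in $K_*$--modules one page at a time. In each fixed bidegree the $r${\th} page $E^r_{s,t}(n)$ is independent of $n$ once $n$ exceeds the filtration degree, so these pro-systems are eventually constant --- in particular Mittag--Leffler --- whence their termwise limits have vanishing $\lim{}^1$ and the exact couples of the finite-stage spectral sequences assemble to an exact couple in the limit. This yields the asserted spectral sequence: the $E^1$--page is the full normalized cobar complex, the $E^2$--page is $\Cotor^{K_* C}_{*,*}(K_* M, K_* N)$ in every filtration, and the abutment is $\mylim{\CatOf{Modules}_{K_*}}{n} K_* \Tot_n \Cobar(M;C;N)$, filtered by the inverse limits of the finite-stage filtrations. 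I expect the genuine work to lie in controlling this abutment: pushing through $\lim_n$ the short exact sequences relating the finite filtration of each $K_* \Tot_n \Cobar(M;C;N)$ to its successive quotients can introduce $\lim{}^1$--contributions, so the limiting filtration need not be finite, exhaustive, or Hausdorff --- which is exactly why only conditional convergence, in the sense of Boardman, is claimed. I would dispatch this last point by appealing to the standard comparison for inverse limits of conditionally convergent spectral sequences rather than re-deriving it.
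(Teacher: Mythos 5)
Your construction is exactly what the paper's proof gestures at: the paper's ``proof'' is merely a pointer to the literature (Ravenel--Wilson for the dual bar spectral sequence, Ravenel's appendix for $\Cotor$, Boardman for convergence), and you have carried out that construction explicitly --- the Bousfield--Kan tower of partial totalizations, the identification of the $s$th layer as the conormalization $M \sm F^{\sm s} \sm N$, the K\"unneth identification of the $E^1$--page and the cobar differential, strong convergence from finiteness, and the page-by-page inverse limit with a Boardman-style conditional-convergence appeal. This is the same route, just spelled out.
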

\begin{proof}
This is an aggregation of several standard results in the literature.  Ravenel and Wilson~\cite[Section 2]{RavenelWilson} provide a convenient summary of the bar spectral sequence, and these spectral sequences arise as its dual.  Ravenel~\cite[Appendix A1]{RavenelGreenBook} also provides a collection of results on the homological algebra of comodules and in particular gives a definition and lists properties for ``$\Cotor$''.  Finally, Boardman~\cite[Theorem 7.1]{Boardman} provides tools for analyzing the convergence of the spectral sequences.
\end{proof}

\noindent We now sew these together to analyze Sadofsky's inverse limit spectral sequence in the case at hand.  This is mostly an exercise in elementary homological algebra, so requires substantial bookkeeping but is otherwise straightforward.

\begin{theorem}\label{CorrectConvergence}
Continue to assume $p \gg \height \Gamma$.  Assume that the natural map \[K_s \Tot_t \Cobar(M; C; N) \to K_s \Tot_{t-1} \Cobar(M; C; N)\] is injective when $s+t$ is odd and surjective when $s+t$ is even.\footnote{In particular, this is satisfied if $M$, $N$, and $C$ have even--concentrated $K$--homology.}  There then is an additional convergent spectral sequence \[R^* \mylim{\CatOf{Comod}_{K_* K}}{t} \{K_* \Tot_t \Cobar(M; C; N)\}_t \Rightarrow K_* (M \cotensor_C N),\] where the derived inverse limit is taken in the category of $K_* K$--comodules.
\end{theorem}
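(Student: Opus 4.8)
The plan is to combine the two spectral sequences already in hand — Sadofsky's inverse-limit spectral sequence (\Cref{SadofskysTheoremInCotangentSection}) and the tower of finite-stage spectral sequences of \Cref{NaiveCotorSSeq} — by feeding the latter's output into the former. First I would verify the hypothesis of \Cref{SadofskysTheoremInCotangentSection} for the pro-system $\{\Tot_t \Cobar(M;C;N)\}_t$: namely, that $\{K_* \Tot_t \Cobar(M;C;N)\}_t$ is a Mittag-Leffler system of $K_*$-modules. This is exactly where the injectivity/surjectivity assumption on the transition maps $K_s\Tot_t \to K_s\Tot_{t-1}$ enters. The parity bookkeeping ($s+t$ odd $\Rightarrow$ injective, $s+t$ even $\Rightarrow$ surjective) is designed so that at each total degree the tower of abelian groups is eventually either injective or surjective, hence Mittag-Leffler; in the even-concentrated case flagged in the footnote this is automatic because all odd groups vanish and the system is already pro-constant in the relevant range. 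Since $M\cotensor_C N$ is by definition $\lim_{\CatOf{Spectra}_\Gamma} \Tot_t \Cobar(M;C;N)$ (using that $\Tot = \lim_{\CatOf C}$ of the cosimplicial diagram, cf.\ \Cref{CotensorExistenceLemma}), \Cref{SadofskysTheoremInCotangentSection} applies verbatim and yields a convergent spectral sequence
\[
R^* \mylim{\CatOf{Comod}_{K_* K}}{t} \{K_* \Tot_t \Cobar(M;C;N)\}_t \Rightarrow K_* (M\cotensor_C N),
\]
which is precisely the claimed statement.

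The one point that needs genuine care — and which I expect to be the main obstacle — is the identification of the $E_1$-page, i.e.\ confirming that the terms $K_* \Tot_t \Cobar(M;C;N)$ appearing as input here are the \emph{same} objects, as $K_* K$-comodules and with the same transition maps, as those computed by \Cref{NaiveCotorSSeq}. The subtlety is that \Cref{NaiveCotorSSeq} computes $K_* \Tot_t$ as a $K_*$-module via a spectral sequence internal to $\CatOf{Modules}_{K_*}$, whereas \Cref{SadofskysTheoremInCotangentSection} wants the comodule structure and a derived limit in $\CatOf{Comod}_{K_* K}$. I would argue that the $K_* K$-coaction on $K_* \Tot_t \Cobar(M;C;N)$ is the one induced functorially by the unit $K \to K\sm K$ smashed into the tower, that the transition maps $\Tot_t \to \Tot_{t-1}$ are maps of $K_*K$-comodules because they come from maps of spectra, and that the finite-stage spectral sequence of \Cref{NaiveCotorSSeq} is in fact a spectral sequence of $K_*K$-comodules (this is where the hypothesis $p\gg\height\Gamma$ is used, via the flatness / nice homological behaviour of $K_* K$ that makes $\CatOf{Comod}_{K_*K}$ well-behaved, as also invoked in \Cref{SadofskysTheoremInCotangentSection} and \Cref{LimitAppendix}). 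Strong convergence of each finite-stage spectral sequence, asserted in \Cref{NaiveCotorSSeq}, then pins down $K_* \Tot_t$ up to the associated-graded, which suffices to run the derived-limit spectral sequence; one does not need a canonical splitting.

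Finally I would address convergence of the resulting spectral sequence. \Cref{SadofskysTheoremInCotangentSection} already delivers convergence once the Mittag-Leffler hypothesis is met, so strictly speaking nothing further is required; but to be safe I would note that the derived functors $R^i\lim$ in $\CatOf{Comod}_{K_*K}$ vanish for $i$ large (the relevant cohomological dimension being controlled by $p\gg\height\Gamma$, cf.\ \Cref{LimitAppendix}), so the spectral sequence lives in a bounded region of filtration and converges strongly. Assembling these pieces — checking Mittag-Leffler from the parity hypothesis, identifying the input comodules and transition maps with the output of \Cref{NaiveCotorSSeq}, and invoking the convergence already built into \Cref{SadofskysTheoremInCotangentSection} — completes the argument. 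The whole thing is, as the text warns, ``an exercise in elementary homological algebra'': the mathematical content is entirely in the two cited theorems, and the work here is the bookkeeping that lets them be composed.
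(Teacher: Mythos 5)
Your overall structure is correct: you rightly identify that the proof consists of verifying the Mittag--Leffler hypothesis of \Cref{SadofskysTheoremInCotangentSection} for the pro-system $\{\Tot_t \Cobar(M;C;N)\}_t$ and then citing that theorem, and the paper does exactly this. But there is a genuine gap at the crucial step: the parity hypothesis does \emph{not} by itself give Mittag--Leffler, and your assertion that ``at each total degree the tower of abelian groups is eventually either injective or surjective, hence Mittag--Leffler'' does not hold. For fixed $s$, the parity of $s+t$ alternates with $t$, so the transition maps in the tower $\{K_s\Tot_t\}_t$ genuinely alternate between injective and surjective; such a tower need not be Mittag--Leffler. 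A counterexample: $\Z \xleftarrow{\cdot 2} \Z \xleftarrow{=} \Z \xleftarrow{\cdot 2} \Z \xleftarrow{=} \cdots$, whose images $2^n\Z$ never stabilize. What rescues the argument in the paper is an explicit inductive computation showing that $K_s\Tot_t$ decomposes as a direct sum whose summands are cochains-mod-coboundaries and cohomology ($\Cotor$) groups of the cobar complex, and---crucially---that the transition maps respect these decompositions, being projections away from a summand in one parity and of the form $(\text{identity})\oplus(\text{inclusion of cocycles into cochains})$ in the other. That specific structure, not mere alternation of injectivity and surjectivity, is what forces the images to stabilize after finitely many steps. This bookkeeping, carried out in the paper by splicing together three long exact sequences from the exact couple of the coskeletal filtration, is the real content of the proof and is absent from your proposal.

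Your secondary concern---transferring the $E^1$-page from $\CatOf{Modules}_{K_*}$ to $\CatOf{Comod}_{K_* K}$---is less of an obstacle than you anticipate: the transition maps come from maps of spectra, hence are automatically maps of $K_*K$-comodules, and the paper does not dwell on this. The effort you expect to spend there should instead be redirected to the Mittag--Leffler verification, which you have treated as routine when it is in fact where the inductive work lives.
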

\begin{proof}
Throughout, we will consider the degree $s+t$ part of the $t$-cochains $C^t_{s+t}$, coboundaries $B^t_{s+t}$, and cocycles $Z^t_{s+t}$ of the normalized cobar complex \[\Cobar(K_* M; K_* C; K_* N).\]  With an induction beginning at $t = 0$ and $t = 1$, we claim the $K$-homology of the $t${\th} level of the tower (i.e., the target of the $t${\th} partial spectral sequence) is
\[
K_s \Tot_t \Cobar(M; C; N) = D^1_{s, t} = 
\begin{cases}
D^1_{s, t-1} \oplus \frac{C^t_{\frac{s+t}{2}}}{B^t_{\frac{s+t}{2}}} & \hbox{if $s+t$ is even,} \\
D^1_{s, t-2} \oplus H^{t-1}_{\frac{s+t-1}{2}} & \hbox{if $s+t$ is odd,}
\end{cases}
\]
where $D^1_{*, *}$ denotes the rear of the exact couple of the full spectral sequence.  In particular, this formula will show that the inverse limit tower is Mittag-Leffler on $K$--homology, so that Sadofsky's hypotheses are satisfied.  The theorem then follows.

Induction proceeds by considering one triangle in that exact couple:
\begin{center}
\begin{tikzcd}[column sep=1em]
K_s \Tot_{t-1} \Cobar(M; C; N) & K_s \Tot_t \Cobar(M; C; N) \arrow{l} \\
& K_s \Loops^t(M \wedge C^{\wedge t} \wedge N) \arrow{u} \arrow[leftarrow]{lu}{[-1]}
\end{tikzcd}
=
\begin{tikzcd}[column sep=1em]
D^1_{s, t-1} & D^1_{s, t} \arrow{l} \\
& E^1_{s, t} \arrow{u} \arrow[leftarrow]{lu}{[-1]}.
\end{tikzcd}
\end{center}
The bottom vertex in the triangle is the $K_*$-module of cochains, and we take $s$ to be a degree in which $K_{s+t}(M \wedge C \wedge \cdots \wedge C \wedge N)$ is nonvanishing, i.e., $s + t$ is even.  Then, the triangle unrolls into an exact sequence which, using the inductive hypothesis, takes the form
\begin{center}
\begin{tikzcd}[column sep=1.1em]
0 \arrow{r} & D^1_{s+1,t} \arrow{r} \arrow[double,-]{d} & D^1_{s+1,t-1} \arrow{r} \arrow[double,-]{d} & E^1_{s,t} \arrow{r} \arrow[double,-]{d} & D^1_{s,t} \arrow{r} \arrow[double,-]{d} & D^1_{s,t-1} \arrow{r} \arrow[double,-]{d} & 0 \\
0 \arrow{r} & H^{t-1}_{\frac{s+t}{2}} \oplus X \arrow{r}{i \oplus 1} & \frac{C^{t-1}_{\frac{s+t}{2}}}{B^{t-1}_{\frac{s+t}{2}}} \oplus X \arrow{r}{\partial \oplus 0} & C^t_{\frac{s+t}{2}} \arrow{r}{\pi \oplus 0} & \frac{C^t_{\frac{s+t}{2}}}{B^t_{\frac{s+t}{2}}} \oplus Y \arrow{r}{0 \oplus 1} & Y \arrow{r} & 0
\end{tikzcd}
\end{center}
for some modules $X$ and $Y$ to be determined.  We then splice three of these long sequences together to form \Cref{FigThreeSequences}, which is labeled in terms of the cobar cohomology groups in \Cref{FigCobarSequences}.  The action lies in the zig-zag containing $D^1_{s, t+1}$ through $D^1_{s, t-2}$: if $D^1_{s, t-1}$ and $D^1_{s, t-2}$ are assumed to be as claimed, then the observation involving $Y$ shows that $D^1_{s, t-1}$ is as well, which in turn determines $D^1_{s, t+1}$ as the kernel of the map $D^1_{s, t} \to E^1_{s-1,t+1}$, i.e., as the cocycle subgroup of the cochain group, taken modulo the coboundaries.  This proves the inductive claim.
\end{proof}

\begin{figure}
\begin{center}
\begin{tikzcd}[row sep=0.8em, ampersand replacement=\&]
\& \& \& \& 0 \arrow{d} \\
\& \& \& \& D^1_{s, t+1} \arrow{d} \& 0 \arrow{d} \\
0 \arrow{r} \& D^1_{s+1,t} \arrow{r} \& D^1_{s+1,t-1} \arrow{r} \& E^1_{s,t} \arrow{r} \& D^1_{s,t} \arrow{r} \arrow[bend left]{dl} \& D^1_{s, t-1} \arrow{r} \arrow{d} \& 0 \\
0 \& D^1_{s-1,t} \arrow{l} \& D^1_{s-1,t+1} \arrow{l} \& E^1_{s-1,t+1} \arrow{l} \& \& D^1_{s,t-2} \arrow[bend left]{dl} \\
\& 0 \& D^1_{s-1, t-2} \arrow{l} \& D^1_{s-1,t-1} \arrow{l} \& E^1_{s-1,t-1} \arrow{l}
\end{tikzcd}
\end{center}
\caption{Three interacting exact sequences.}\label{FigThreeSequences}

\begin{center}
\begin{tikzcd}[column sep=0.65cm, row sep=0.8em, ampersand replacement=\&]
\& \& \& \& 0 \arrow{d} \\
\& \& \& \& H^t_{\frac{s+t}{2}} \oplus D^1_{s,t-1} \arrow{d} \& 0 \arrow{d} \\
0 \arrow{r} \& D^1_{s+1,t} \arrow{r} \& D^1_{s+1,t-1} \arrow{r} \& C^t_{\frac{s+t}{2}} \arrow{r} \& \frac{C^t_{\frac{s+t}{2}}}{B^t_{\frac{s+t}{2}}} \oplus D^1_{s,t-1} \arrow{r} \arrow[bend left]{dl} \& D^1_{s, t-1} \arrow{r} \arrow{d} \& 0. \\
0 \& D^1_{s-1,t} \arrow{l} \& D^1_{s-1,t+1} \arrow{l} \& C^{t+1}_{\frac{s+t}{2}} \arrow{l} \& \& D^1_{s,t-2} \arrow[bend left]{dl} \\
\& 0 \& D^1_{s-1, t-2} \arrow{l} \& D^1_{s-1,t-1} \arrow{l} \& E^1_{s-1,t-1} \arrow{l}
\end{tikzcd}
\end{center}
\caption{The interacting sequences with cobar complex names.}\label{FigCobarSequences}
\end{figure}

\subsection{Computation in the case of a formal curve}

In general, it is very hard to control the spectral sequence of \Cref{CorrectConvergence}.  Even computing these derived functors is prohibitively complicated in almost any nondegenerate case---for instance, Hopkins~(\cite[Section 14]{MITETheory},~\cite{BarthelBeaudryPeterson}) has recommended them in an approach to analyzing the chromatic splitting conjecture.\footnote{Computing these derived functors can be compared to computing certain local cohomology groups and to certain ``noncontinuous'' forms of the group cohomology of the Morava stabilizer group.  These are both very difficult invariants.}  This moves us to consider progressively more specialized situations where we can fully determine this spectral sequence, beginning with the case that the coalgebraic formal scheme $\Sch K_0 C$ (cf. \Cref{SchDefinition}) is a formal variety.

\begin{definition}
A $k$--coalgebra $C$ over a field $k$ will be called a \textit{formal variety (of dimension $n$)} when, for any augmented nilpotent $k$--algebra $A$ with maximal ideal $\m$, there is a natural isomorphism between the group--like elements of $C \otimes A$ and $\m^{\times n}$.\footnote{Equivalently, the dual profinite algebra $C^*$ is isomorphic to a power series ring.}  A spectral coalgebra $C$ will be called a \textit{formal coalgebraic variety spectrum (local to $\Gamma$)} when $K_* C$ is even--concentrated and $K_0 C$ is a formal variety in coalgebras.  In both settings, we use \textit{formal curve} as a synonym for a formal variety of dimension $1$.
\end{definition}

For the rest of this section, we take $\eta\co \S \to C$ to be a pointed coalgebraic formal \emph{curve} spectrum local to $\Gamma$, and we write $M$ for the cofiber of $\eta$.  Supposing that $M^{\cotensor_C j}$ has been shown to support the structure of an even $C$--$C$--cobimodule spectrum, we now inductively pursue an even $C$--$C$--cobimodule structure on $M^{\cotensor_C (j+1)}$.

\begin{theorem}\label{CoskelTowerIsProconstant}
The system $\{K_*^n \Cobar(M; C; M^{\cotensor_C j})\}_n$ is pro-constant.
\end{theorem}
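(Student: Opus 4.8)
The plan is to reduce the statement to a single comodule-theoretic fact---that $K_*M$ is a relatively injective $K_*C$--comodule---and then let the spectral sequences of \Cref{NaiveCotorSSeq} do the rest. Those spectral sequences express the $K$--homology of each finite stage $\Tot_n\Cobar(M; C; M^{\cotensor_C j})$ in terms of the groups $\Cotor^{K_*C}_{*,*}(K_*M, K_*(M^{\cotensor_C j}))$, so once I know that this $\Cotor$ is concentrated in cohomological degree $0$, each spectral sequence in the $n$--indexed system is supported in a single column: there is no room for differentials, no extension problem, and its target is forced to be independent of $n$.

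To establish the relative injectivity, I would argue as follows. Because $C$ is a coalgebraic formal curve spectrum, $K_*C$ is even--concentrated and its $K_*$--linear dual $A := (K_*C)^\vee$ is a power series ring $(K_*)\ps{x}$---in particular an integral domain. Dualizing the cofiber sequence $\S \to C \to M$ identifies $(K_*M)^\vee$ with the augmentation ideal $\m = (x) \subset A$, and since $x$ is a non--zero--divisor, multiplication by $x$ is an $A$--module isomorphism $A \xrightarrow{\sim} \m$. Hence $(K_*M)^\vee \cong (K_*C)^\vee$ as $A$--modules, so, dualizing back, $K_*M \cong K_*C$ as right $K_*C$--comodules; concretely, in the divided--power presentation $K_0C = k\{\gamma_0, \gamma_1, \dotsc\}$ with $\Delta\gamma_n = \sum_{i+j=n}\gamma_i \otimes \gamma_j$, one checks that $\gamma_n \mapsto \gamma_{n-1}$ is a comodule isomorphism. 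Since $K_*C$ is cofree---hence relatively injective---over itself, the same holds for $K_*M$, and therefore $\Cotor^{K_*C}_s(K_*M, L) = 0$ for all $s > 0$ and every $K_*C$--comodule $L$, while $\Cotor^{K_*C}_0(K_*M, L) = K_*M \cotensor_{K_*C} L$.

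Feeding $M$ (a right $C$--comodule) and $M^{\cotensor_C j}$ (a left $C$--comodule, by the running inductive hypothesis) into \Cref{NaiveCotorSSeq}, the $n$--th spectral sequence is strongly convergent with $E^2$--page concentrated in Cotor--degree $0$, equal to $K_*M \cotensor_{K_*C} K_*(M^{\cotensor_C j})$. Supported in a single column, it degenerates with no extension problem, so $K_*\Tot_n\Cobar(M; C; M^{\cotensor_C j}) \cong K_*M \cotensor_{K_*C} K_*(M^{\cotensor_C j})$ for every $n \ge 1$, with each structure map $\Tot_{n+1}\Cobar \to \Tot_n\Cobar$ inducing the identity on this group. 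Thus the system $\{K_*^n\Cobar(M; C; M^{\cotensor_C j})\}_n$ is constant from stage $1$ onward, and in particular pro-constant.

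The step I expect to demand genuine care is the comodule identification $K_*M \cong K_*C$: one must check that the $K_*$--linear duality between $K_*C$--comodules and $A$--modules behaves well on the (graded, not finitely generated, but appropriately finite-type) modules in play, so that ``$(K_*M)^\vee = \m$'', the freeness of $\m$, and the transport of relative injectivity back across the duality are all legitimate---and this is precisely where even--concentration and the formal--curve hypothesis are used. Everything downstream is the routine collapse of a one--column spectral sequence.
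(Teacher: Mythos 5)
Your key computation---dualizing $K_*C$ to a power series ring $A=K_*\ps{x}$, identifying $(K_*M)^\vee$ with the augmentation ideal $\m=(x)\cong A$, and concluding that $\Cotor^{K_*C}_{s}(K_*M,-)$ vanishes for $s>0$---is exactly the paper's computation, and it is the heart of the argument. But the final step, where you pass from ``higher $\Cotor$ vanishes'' to ``each $\Tot_n$ spectral sequence is a single column, so the tower is \emph{constant} from stage $1$,'' has a gap, and the resulting stronger claim is in fact false.

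The problem is that the spectral sequence of \Cref{NaiveCotorSSeq} for the \emph{finite} truncation $\Tot_n$ identifies $E^2_{*,s}$ with $\Cotor^{K_*C}_s$ only for $s<n$. At the top filtration $s=n$ the cochain complex is truncated, so there is no outgoing differential, and what sits there is $C^n/B^n$ (cochains modulo coboundaries), not $H^n=Z^n/B^n$. When $\Cotor^s=0$ for $s\ge 1$ we have $Z^n=B^n$, so $C^n/B^n\cong B^{n+1}$, which is the image of the next cobar differential and is generally nonzero. Thus $K_s\Tot_n\Cobar(M;C;M^{\cotensor_C j})$ is \emph{not} $\Cotor^0$ alone: it is $\Cotor^0$ plus a boundary term that genuinely depends on $n$. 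This is precisely the content of \Cref{CorrectConvergence}, where $D^1_{s,t}$ is displayed as a direct sum of the previous stage and a $C^t/B^t$ (or $H^{t-1}$) factor. The individual groups of the tower therefore change with $n$, so the tower is not eventually constant, and ``no extension problem, hence isomorphic'' does not apply.

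What saves the theorem---and what you omit---is an analysis of the \emph{transition maps}. The paper's proof uses the direct-sum decomposition of \Cref{CorrectConvergence} to see that the map $D^1_{s,t}\to D^1_{s,t-1}$ projects away the top $C^t/B^t$ summand and is the identity on the surviving $\Cotor^0$ piece (all intermediate $\Cotor$ summands having vanished). It is only in the pro-category---where one may precompose with one more transition map to kill the spurious boundary term---that the system becomes constant. So you need the finer accounting of exactly which summands appear at each level and how the structure maps treat them; ``single column, hence constant'' overshoots. The distinction between ``constant'' and ``pro-constant'' is exactly the point of the theorem, and your argument would have to be repaired to respect it.

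Incidentally, your intermediate observation that $K_*M\cong K_*C$ as $K_*C$--comodules (via the divided-power shift $\gamma_n\mapsto\gamma_{n-1}$) is correct and a nice way to see the relative injectivity, but it is more than the paper needs: freeness of $(K_*M)^\vee$ over $A$ suffices.
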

\begin{proof}
This system appears as the rear of the exact couple in \Cref{CorrectConvergence}, where we showed that it was presented levelwise as direct sums of two sorts of groups: $\Cotor$ groups and cochain groups modulo coboundaries.  These groups are then linked together by appropriate projections (away from a direct sum factor) and inclusions (of cocycles modulo coboundaries---i.e., $\Cotor$ groups---into cochains modulo coboundaries).  Our conclusion will follow from a direct calculation of these $\Cotor$ groups.

This problem may be addressed by standard tools in homological algebra and in local cohomology, which is perhaps more visible after taking linear duals and passing to profinite algebras: \[\Cotor_{K_* C}^{*, *}(K_* M, K_* N) \cong \Tor_{K_* C^\vee}^{*, *}(K_* M^\vee, K_* N^\vee)^\vee.\]  Under our hypotheses, we may choose isomorphisms \[K_* C^\vee \cong K_* \ps{x}, \quad K_* M^\vee = \<x\>.\]  This shows $(K_* M)^\vee$ to be a free $(K_* C)^\vee$--module, hence the higher $\Tor$ groups vanish.\footnote{For $C$ of higher dimension, these $\Cotor$ groups are bounded above, and so we may draw a similar conclusion.  However, they fail to be even, and this harms the induction irreparably.}

We now return to the original problem of pro-constancy.  Because so many $\Cotor$ groups themselves vanish, we find that the maps in the pro-system are zero except on those nonvanishing factors, where the maps are instead the respective identity morphisms.  It follows that, for each fixed $s$, the tower $D^1_{s, t}$ is pro-constant.
\end{proof}

\begin{corollary}\label{CotensorInterchangeLaw}
There is an isomorphism \[K_*(M \cotensor_C M^{\cotensor_C j}) \cong K_* M^{\cotensor_{K_* C} (j+1)}.\]
\end{corollary}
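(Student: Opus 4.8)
The plan is to obtain the corollary by feeding the inverse‑limit machinery already assembled: \Cref{CorrectConvergence} reduces $K_*(M \cotensor_C M^{\cotensor_C j})$ to a derived inverse limit, and \Cref{CoskelTowerIsProconstant} collapses that limit onto an algebraic cotensor product. First I would set $N = M^{\cotensor_C j}$; by the running inductive hypothesis $N$ is an even $C$–$C$–cobimodule spectrum, and $M$ (the cofiber of the split coaugmentation $\eta$) and $C$ are even as well, so the parity condition of \Cref{CorrectConvergence} (cf.\ its footnote) holds and we have a convergent spectral sequence
\[R^* \mylim{\CatOf{Comod}_{K_* K}}{t} \{K_* \Tot_t \Cobar(M; C; N)\}_t \Rightarrow K_* (M \cotensor_C M^{\cotensor_C j}).\]

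Next I would invoke \Cref{CoskelTowerIsProconstant}: the tower $\{K_* \Tot_t \Cobar(M; C; N)\}_t$ is pro‑constant, hence pro‑isomorphic in $K_* K$–comodules to a constant tower. Since the right derived inverse limit of a tower depends only on its pro‑isomorphism class and vanishes in positive degrees on a constant tower, the spectral sequence degenerates, giving
\[K_*(M \cotensor_C M^{\cotensor_C j}) \cong \mylim{\CatOf{Comod}_{K_* K}}{0} \{K_* \Tot_t \Cobar(M; C; N)\}_t.\]
I would then identify this limit using the $\Cotor$ calculation already carried out inside the proof of \Cref{CoskelTowerIsProconstant} (equivalently, the $n \to \infty$ spectral sequence of \Cref{NaiveCotorSSeq}): the freeness of $(K_* M)^\vee$ over $(K_* C)^\vee \cong K_* \ps{x}$ forces $\Cotor^{K_* C}_{*,*}(K_* M, K_* N)$ to be concentrated in homological degree zero, where it is by definition the comodule cotensor product $K_* M \cotensor_{K_* C} K_* N$. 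Hence $K_*(M \cotensor_C M^{\cotensor_C j}) \cong K_* M \cotensor_{K_* C} K_* M^{\cotensor_C j}$.

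Finally I would run the induction on $j$. The base case (say $j = 1$) is exactly the previous display, and in the inductive step I substitute $K_* M^{\cotensor_C j} \cong K_* M^{\cotensor_{K_* C} j}$ and apply coassociativity of the algebraic cotensor product over $K_* C$—legitimate here because the vanishing of higher $\Cotor$ forces the derived cotensor to coincide with the underived one—to land on $K_* M^{\cotensor_{K_* C}(j+1)}$. Along the way I would note that this module is again even (the higher‑$\Cotor$ vanishing propagates to each new cotensor power), which is precisely the evenness hypothesis needed for \Cref{CoskelTowerIsProconstant} to apply at the next stage, so the induction is self‑sustaining.

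The genuinely substantive inputs—pro‑constancy and the vanishing of the higher $\Cotor$ groups—are already in hand, so I expect the main obstacle to be organizational rather than mathematical: carefully pinning down that a pro‑constant tower of $K_* K$–comodules has vanishing higher derived limits \emph{in that abelian category} and that its $R^0$ is the honest limit, and checking that this limit is the cotensor product of $K_* C$–comodules and not merely that of the underlying $K_*$–modules, so that the resulting isomorphism is one of comodules as the induction requires.
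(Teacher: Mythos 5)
Your proposal is correct and follows essentially the same route as the paper's proof: pro-constancy of the tower (\Cref{CoskelTowerIsProconstant}) kills the higher derived inverse limits in \Cref{CorrectConvergence}, and the freeness of $(K_* M)^\vee$ over $(K_* C)^\vee \cong K_*\ps{x}$ kills the higher $\Cotor$, leaving the algebraic cotensor product. The extra inductive bookkeeping you spell out (evenness propagation and the identification $K_* M^{\cotensor_C j} \cong K_* M^{\cotensor_{K_* C} j}$) is implicit in the paper's running induction, so there is no substantive difference.
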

\begin{proof}
Because the tower is pro-constant, the higher derived inverse limit functors of \Cref{CorrectConvergence} vanish, and hence we need only contend with the higher derived functors of cotensoring itself.  These also vanish, again because the maximal ideal in a one--dimensional power series algebra is a free module.
\end{proof}

\begin{corollary}
The spectrum $M^{\cotensor_C (j+1)}$ is a $C$--$C$--cobimodule spectrum.
\end{corollary}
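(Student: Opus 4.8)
The plan is to invoke the converse half of \Cref{CotensorExistenceLemma} in the case $n = 2$. Write $N := M^{\cotensor_C j}$, which by the inductive hypothesis is an even $C$--$C$--cobimodule; together with $M$ it constitutes a length $2$ chain of compatible $C$--$C$--cobimodules, and by construction the spectrum underlying the prospective $M^{\cotensor_C(j+1)}$ is $\lim_{\CatOf{Spectra}_\Gamma} \Cobar(M; C; N)$. To equip this spectrum with a $C$--$C$--cobimodule structure it therefore suffices to exhibit a filler $F\co \Delta^{\op}_{/[2]} \to \CatOf{Spectra}_\Gamma^{\op}$ of the chain and to read off the associated $C_0$--$C_2$--cobimodule along its face maps. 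By \Cref{CotensorExistenceLemma}, such a filler exists provided the natural comparison map
\[
X \sm \lim_{\CatOf{Spectra}_\Gamma} \Cobar(M; C; N) \sm Y \longrightarrow \lim_{\CatOf{Spectra}_\Gamma} \left( X \sm \Cobar(M; C; N) \sm Y \right)
\]
is a $\Gamma$--local equivalence for every pair of $\Gamma$--local spectra $X$ and $Y$; since $K$--homology detects $\Gamma$--local equivalences, it is enough to verify that this map is a $K_*$--isomorphism.

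First I would compute the source: by the Künneth isomorphism for $K$ together with \Cref{CotensorInterchangeLaw}, its $K$--homology is $K_* X \otimes_{K_*} K_* M^{\cotensor_{K_* C}(j+1)} \otimes_{K_*} K_* Y$. For the target, smashing with a fixed spectrum is exact and so commutes with the finite totalizations $\Tot_t$; hence, by Künneth again, the tower computing the target on $K$--homology is $\{K_* X \otimes_{K_*} K_* \Tot_t \Cobar(M; C; N) \otimes_{K_*} K_* Y\}_t$. Because $K_*$ is a graded field, $K_* X$ and $K_* Y$ are free and therefore flat, so this tower is pro-constant, as $\{K_* \Tot_t \Cobar(M; C; N)\}_t$ is pro-constant by \Cref{CoskelTowerIsProconstant}. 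A pro-constant tower is a fortiori Mittag-Leffler, so \Cref{SadofskysTheoremInCotangentSection} applies: its derived-inverse-limit spectral sequence converges, its higher derived limits vanish, and its zeroth is the constant value of the tower, which by \Cref{CotensorInterchangeLaw} is again $K_* X \otimes_{K_*} K_* M^{\cotensor_{K_* C}(j+1)} \otimes_{K_*} K_* Y$. Setting $X = Y = \S$ also records that $M^{\cotensor_C(j+1)}$ has even $K$--homology, which is what keeps the induction alive for the passage from $j+1$ to $j+2$.

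It then remains only to check that the comparison map \emph{realizes} the abstract isomorphism just produced, rather than the two sides being merely abstractly isomorphic. Both are obtained from the single cosimplicial spectrum $X \sm \Cobar(M; C; N) \sm Y$ via the same two mechanisms --- Künneth at each finite stage and pro-constancy of the resulting tower --- and the comparison map is compatible with both, so this is a matter of tracking it through the collapsing Sadofsky spectral sequence. With that in hand, \Cref{CotensorExistenceLemma} produces the filler $F$, and extracting the $C_0$--$C_2$--cobimodule finishes the inductive step.

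The hard part is exactly the hypothesis flagged repeatedly at the start of \Cref{ComputationalToolsSection}: totalizations of cosimplicial spectra do not commute with the $\Gamma$--local smash product in general, and the sole reason the comparison map above is an equivalence is that the formal-curve assumption forces the relevant $\Cotor$ groups to vanish, so that the cobar tower is \emph{pro-constant} rather than only Mittag-Leffler. Pro-constancy is precisely the property that survives tensoring with arbitrary $K_* X$ and $K_* Y$, and hence it is what lets the criterion of \Cref{CotensorExistenceLemma} be checked uniformly in $X$ and $Y$, as that Lemma demands.
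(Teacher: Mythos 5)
Your proof is correct and follows essentially the same route as the paper: the paper likewise observes that tensoring the pro-constant towers of \Cref{CoskelTowerIsProconstant} with $K_* X$ and $K_* Y$ preserves pro-constancy, deduces that the comparison map into $K_* \lim_n \Tot_n (X \sm \Cobar(M; C; M^{\cotensor_C j}) \sm Y)$ is an isomorphism, and then invokes \Cref{CotensorExistenceLemma}. Your write-up simply makes explicit the Künneth and Sadofsky bookkeeping (and the evenness needed to continue the induction) that the paper leaves implicit.
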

\begin{proof}
Tensoring the pro-constant towers of \Cref{CoskelTowerIsProconstant} with $K_* X$ and $K_* Y$ does not disturb their pro-constancy.  It follows that the natural map \[K_* X \otimes K_* M^{\cotensor_{K_* C} (j+1)} \otimes K_* Y \to K_* \mylim{\CatOf{Spectra}_\Gamma}{n} \Tot_n (X \sm \Cobar(M; C; M^{\cotensor_C j}) \sm Y)\] is an isomorphism, and hence the conditions of \Cref{CotensorExistenceLemma} are satisfied.
\end{proof}

This completes the induction and shows that $M^{\cotensor_C j}$ is a $C$--$C$--cobimodule spectrum for all values of $j$.  We use this to justify the following definition.

\begin{definition}\label{AnnularTowerDefn}
Writing $C\range n\infty$ for $M^{\cotensor_C n}$, there are natural projection (or ``pinch'') maps $C\range n\infty \to C\range{n'+1}\infty$ for $n' \ge n$.  We define spectra $C\range n{n'}$ by the fiber sequence \[C\range n{n'} \to C\range n\infty \to C\range {n'+1}\infty\] and the \textit{annular tower} is the sequential system
\begin{center}
\begin{tikzcd}
C\range0\infty \arrow{r} & C\range1\infty \arrow{r} & \cdots \arrow{r} & C\range n\infty \arrow{r} & C\range{n+1}\infty \arrow{r} & \cdots \\
C\range00 \arrow{u} & C\range11 \arrow{u} & \cdots & C\range nn \arrow{u} & C\range{n+1}{n+1} \arrow{u} & \cdots.
\end{tikzcd}
\end{center}
\end{definition}

Our computational tools have been explicit enough that we can now also deduce that $K$--cohomology takes the correct value on $C\range 11$.  By \Cref{CotensorInterchangeLaw}, we have that $K^0 C\range 11$ indeed belongs to the exact sequence
\begin{center}
\begin{tikzcd}
K^0 C\range 11 & K^0 C\range 1\infty \arrow{l} & K^0 C\range 1\infty \otimes_{K^0 C} K^0 C\range 1\infty \arrow{l} \arrow[equal]{d} \\
& & K^0 C\range 1\infty \cdot K^0 C\range 1\infty \arrow["\mu"]{ul}.
\end{tikzcd}
\end{center}

\begin{definition}\label{TangentSpectrumOfCoalgebraicCurve}
In the case where $C$ is a coalgebraic formal curve spectrum, we write $T_\eta C$ for the spectrum $C\range 11$, so that the isomorphism of \Cref{TangentSpectrumOfCoalgebraicCurve} reads as an interchange law \[K^* T_\eta C = T^*_{\eta_K} C_K.\]
\end{definition}

\subsection{Koszul duality}
\label{KoszulDualitySection}

We now give an alternative interpretation of the annular tower,\footnote{The ideas in this subsection will not recur, and an uninterested reader may safely skip ahead.} which we first motivate by example.  In \Cref{ExampleTCPinfty} below, we will show for $C = \Susp^\infty_+ \CP^\infty$ that $T_\eta C \simeq \S^2 \simeq \Susp^\infty \CP^1$ and that the annular decomposition of $C$ is into one sphere of each even dimension.  This familiar cellular filtration of $\CP^\infty$ also arises from a second avenue that, at a glance, appears completely separate.  By appealing to the equivalence \[\CP^\infty \simeq BU(1),\] we may use the bar filtration on $BU(1)$ to produce a filtration on $\CP^\infty$ which has filtration quotients $(\Susp U(1))^{\sm n} \simeq \S^{2n}$.  Moreover, there is a tool for mechanically recognizing this phenomenon: the input $U(1)$ to the functor $B$ may be recovered as the based loopspace \[U(1) \simeq \Loops \CP^\infty,\] which is a specific example of the familiar equivalence of categories
\begin{center}
\begin{tikzcd}
\CatOf{ConnectedSpaces} \arrow[shift left=0.25\baselineskip, "\Loops"]{r} & \CatOf{Grouplike}A_\infty\CatOf{Spaces} \arrow[shift left=0.25\baselineskip, "B"]{l} .
\end{tikzcd}
\end{center}
Koszul duality (or ``co/bar duality'') captures this phenomenon in maximum generality, and it turns out the annular tower may be viewed as a specific instance of it: any coalgebraic formal curve spectrum $C$ can be written as ``$BG$'' for an $A_\infty$ ring spectrum $G$ extracted from the bottom annular layer $T_\eta C$.

Our starting point is a second presentation of $T_\eta C$, arising as follows:

\begin{lemma}\label{CotensoringPreservesFibers}
Given a fiber sequence of $C$--$C$--cobimodule spectra $M'' \to M \to M'$ and a fourth cobimodule spectrum $N$, the sequence \[M'' \cotensor_C N \to M \cotensor_C N \to M' \cotensor_C N\] is also a fiber sequence.
\end{lemma}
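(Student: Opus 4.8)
The plan is to exploit the cobar model for the cotensor product supplied by \Cref{CotensorExistenceLemma}, reducing the assertion to the elementary fact that limits preserve fiber sequences.

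First I would note that it suffices to verify the statement on underlying $\Gamma$--local spectra. Indeed, the category of $C$--$C$--cobimodule spectra is stable and the forgetful functor to $\CatOf{Spectra}_\Gamma$ is exact, so it preserves fiber sequences; being conservative, it also reflects them. Thus the cobimodule statement follows once we produce a fiber sequence of spectra $M'' \cotensor_C N \to M \cotensor_C N \to M' \cotensor_C N$. (Alternatively, one could carry the left $C$--coaction along throughout the argument below, running it inside the filler data $F\co \Delta^{\op}_{/[2]} \to \CatOf C^{\op}$ and using that each face functor is exact; this is routine but notationally heavier.)

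Next, since all three cotensor products are assumed to exist, each is witnessed by a filler, and \Cref{CotensorExistenceLemma} identifies their underlying spectra with the totalizations $\Tot \Cobar(M''; C; N)$, $\Tot \Cobar(M; C; N)$, $\Tot \Cobar(M'; C; N)$ of the cobar cosimplicial spectra, whose degree--$t$ terms are $M'' \sm C^{\sm t} \sm N$, $M \sm C^{\sm t} \sm N$, and $M' \sm C^{\sm t} \sm N$, with structure maps assembled from the right $C$--coaction on the first slot, the comultiplication on the middle slots, and the left $C$--coaction on $N$. The composite $M'' \to M \to M'$ is in particular a map of right $C$--comodules, so it induces compatible maps of these cobar objects. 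For each fixed $t$, the endofunctor $(-) \sm C^{\sm t} \sm N$ of $\CatOf{Spectra}_\Gamma$ is exact, because the ($\Gamma$--local) smash product is biexact; applying it to the fiber sequence $M'' \to M \to M'$ yields a fiber sequence in each cosimplicial degree. Therefore $\Cobar(M''; C; N) \to \Cobar(M; C; N) \to \Cobar(M'; C; N)$ is a levelwise fiber sequence of cosimplicial $\Gamma$--local spectra. Since $\fib$ is a finite limit and $\Tot$ is a limit over $\Delta$, and limits commute with limits, applying $\Tot$ produces a fiber sequence of the totalizations, i.e.\ of $M'' \cotensor_C N$, $M \cotensor_C N$, and $M' \cotensor_C N$.

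There is no substantive obstacle here; the only points demanding attention are organizational. One must make sure \Cref{CotensorExistenceLemma} genuinely applies to all three cotensor products --- which it does, since the existence of the fillers is part of the given data --- and one must remember $M'' \to M \to M'$ as a map of right $C$--comodules, so that the induced maps of cobar complexes are maps of cosimplicial objects rather than merely levelwise maps of spectra.
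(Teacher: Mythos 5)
Your proposal is correct and takes essentially the same approach as the paper: the paper's one-line proof is precisely that $- \cotensor_C N$ is the inverse limit of a diagram built by smashing the input with the fixed spectra $C^{\sm n} \sm N$, and both smashing and taking limits preserve fiber sequences. Your version simply unpacks this (the cobar cosimplicial model, levelwise exactness of $(-) \sm C^{\sm t} \sm N$, and the commutation of $\Tot$ with $\fib$) and adds a sound but ultimately unneeded-for-the-paper remark about reducing to underlying spectra via the conservative exact forgetful functor.
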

\begin{proof}
Because $- \cotensor_C N$ is defined by taking the inverse limit of a diagram constructed from smashing the input with the fixed spectra $C^{\sm n} \sm N$, this functor preserves fiber sequences.
\end{proof}

\begin{corollary}\label{TangentSpectrumInOneCotor}
There is a natural equivalence $\S \cotensor_C M \simeq C \range 11$.
\end{corollary}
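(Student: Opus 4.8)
The plan is to produce $\S\cotensor_C M$ by cotensoring against $M$ the short fiber sequence that cuts out $M$. As preparation, I would first record that the coaugmentation $\eta\co\S\to C$ is a map of coalgebras splitting the counit $\eps\co C\to\S$; hence $\S$ acquires a $C$--$C$--cobimodule structure (with both coactions $\eta$), the cofiber sequence $\S\xrightarrow{\eta}C\xrightarrow{\pi}M$ becomes a fiber sequence of $C$--$C$--cobimodules, and, since $\eps$ retracts $\eta$, we get a splitting $C\simeq\S\vee M$ and in particular a section of $\pi$. The lemma on cofibers of coalgebra maps then shows that $M$ is a symmetric cobimodule, so $\widetilde\Delta$ is defined.

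Next I would invoke \Cref{CotensoringPreservesFibers} on the fiber sequence $\S\to C\to M$ with $N=M$ (regarded as a left $C$--comodule), obtaining a fiber sequence of $C$--$C$--cobimodule spectra
\[
\S\cotensor_C M\longrightarrow C\cotensor_C M\xrightarrow{\ \pi\cotensor_C M\ }M\cotensor_C M .
\]
I would then identify the middle term using the cotensor unit equivalence $C\cotensor_C M\simeq M$ --- the upgrade of the retraction $M\to C\cotensor_C M\to M$ noted earlier to an equivalence, via the extra codegeneracy of $\Cobar(C;C;M)$ furnished by $\eps$. Finally, transporting $\pi\cotensor_C M$ across this equivalence, I would match it with $\widetilde\Delta_L\co M\to M\cotensor_C M$ (by unwinding the construction of $\widetilde\Delta$ in \Cref{DeltaFactorsThroughCoideals}), and invoke the symmetric cobimodule property to replace $\widetilde\Delta_L$ by $\widetilde\Delta$. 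The displayed fiber sequence then reads $\S\cotensor_C M\to M\xrightarrow{\widetilde\Delta}M\cotensor_C M$, whose fiber is $C\range 11$ by \Cref{CotangentSpectrumDefn} (equivalently \Cref{AnnularTowerDefn} at $n=n'=1$); since each step above is natural, so is the resulting equivalence $\S\cotensor_C M\simeq C\range 11$.

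I expect the only real obstacle to be the last bookkeeping step: verifying that, after the unit identification $C\cotensor_C M\simeq M$, the map $\pi\cotensor_C M$ is precisely the pinch map $\widetilde\Delta$ appearing in the annular tower --- up to the homotopy $\widetilde\Delta_L\simeq\widetilde\Delta_R$, which must be threaded through naturally. The remaining ingredients are formal: preservation of fibers by $-\cotensor_C N$, the unit law for the cotensor product, and the splitting of the coaugmentation sequence.
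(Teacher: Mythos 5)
Your proposal is correct and is essentially the paper's own argument: both cotensor the defining fiber sequence $\S \xrightarrow{\eta} C \to M$ against $M$ via \Cref{CotensoringPreservesFibers}, collapse the middle term by the cotensor unit equivalence, and recognize the remaining map as $\widetilde\Delta$, so the fiber is $C\range11$ by \Cref{CotangentSpectrumDefn}. The only difference is a mirror image --- the paper cotensors $M$ on the left and exhibits $C\range11$ as $M \cotensor_C \S$, while you cotensor on the other side to get $\S \cotensor_C M$ directly, making explicit the identification of $\pi \cotensor_C M$ with $\widetilde\Delta_L$ and the use of the symmetric cobimodule homotopy, which the paper leaves tacit.
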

\begin{proof}
By applying \Cref{CotensoringPreservesFibers}, one can construct the annular tower by iteratively cotensoring the fiber sequence
\begin{center}
\begin{tikzcd}
C \arrow{r} & M \\
\S \arrow["\eta"]{u}
\end{tikzcd}
\end{center}
with cotensor powers of $M$ and sewing together the overlapping nodes:
\begin{center}
\begin{tikzcd}[column sep=1em]
C \arrow{r} & M \arrow[equal]{r} & M \cotensor_C C \arrow{r} & M \cotensor_C M \arrow[equal]{r} & M^{\cotensor_C 2} \cotensor_C C \arrow{r} & M^{\cotensor_C 2} \cotensor_C M \arrow{r} & \cdots \\
\S \arrow["\eta"]{u} & & M \cotensor_C \S \arrow["M \cotensor_C \eta"]{u} & & M^{\cotensor_C 2} \cotensor_C \S \arrow["M^{\cotensor_C 2} \cotensor_C \eta"]{u} & & \cdots
\end{tikzcd}
\end{center}
In particular, $C \range 11$ appears as $M \cotensor_C \S$.
\end{proof}

\begin{lemma}\label{BottomAnnularLayerIsAlmostKoszul}
When $\S \to C$ is a pointed coalgebra spectrum so that $\Sch K_* C$ is a formal variety, the spectrum $\S \cotensor_C \S$ has a split filtration of the form \[\Susp^{-1} C\range 11 \to \S \cotensor_C \S \to \S.\]
\end{lemma}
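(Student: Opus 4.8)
The plan is to obtain the filtration by applying $\S \cotensor_C(-)$ to the split cofiber sequence $\S \xrightarrow{\eta} C \xrightarrow{\pi} M$, and then to split the resulting fiber sequence by hand, taking care that the evident splitting of $\eta$ is one of \emph{coalgebras} rather than of $C$-cobimodules.

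First I would construct the filtration. The sequence $\S \xrightarrow{\eta} C \xrightarrow{\pi} M$ is a cofiber sequence of $C$--$C$--cobimodules, so by \Cref{CotensoringPreservesFibers} the functor $\S \cotensor_C(-)$ carries it to a cofiber sequence $\S \cotensor_C \S \to \S \cotensor_C C \to \S \cotensor_C M$. Here $\S \cotensor_C C \simeq \S$, since the left $C$--comodule $C$ is coinduced from $\S$: the cobar presentation $\Cobar(\S; C; C)$ of $\S \cotensor_C C$ admits an extra codegeneracy --- contraction of the trailing copy of $C$ along $\eps$ --- so its totalization is carried isomorphically onto its augmentation $\S$. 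And $\S \cotensor_C M \simeq C\range 11$ by \Cref{TangentSpectrumInOneCotor}. Rotating the cofiber sequence then gives the fiber sequence $\Susp^{-1} C\range 11 \to \S \cotensor_C \S \xrightarrow{q} \S$, which is the claimed filtration; tracing the identifications shows that $q$ is simply the canonical map from $\Tot \Cobar(\S; C; \S) = \S \cotensor_C \S$ onto its zeroth stage $\Tot_0 \Cobar(\S; C; \S) = \S$.

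It remains to split $q$, and this is the one place to take care. The splitting of $\eta\co \S \to C$ provided by the counit $\eps\co C \to \S$ is a morphism of coalgebras --- by counitality, $(\eps \sm \eps)\circ \Delta_C = \eps_C$ --- but it is \emph{not} a morphism of $C$--cobimodules (a right--comodule splitting would force $\eta\eps = \mathrm{id}_C$, false as soon as $C$ is not the unit coalgebra), so I cannot push a comodule splitting through $\S \cotensor_C(-)$. Instead I would use that the two-sided cobar construction, and hence the cotensor product, is functorial along morphisms of coalgebras: a coalgebra map $C \to D$ corestricts comodules and induces a map of cosimplicial spectra $\Cobar(\S; C; \S) \to \Cobar(\S; D; \S)$. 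Applying this to the composite of coalgebra maps $\S \xrightarrow{\eta} C \xrightarrow{\eps} \S$ and using $\S \cotensor_\S \S \simeq \S$ yields maps $\S \xrightarrow{s} \S \cotensor_C \S \xrightarrow{q'} \S$ whose composite is induced by $\eps \circ \eta = \mathrm{id}_\S$ and hence homotopic to $\mathrm{id}_\S$; one then identifies $q'$ with the $q$ above (both are the canonical projection of the cobar totalization onto its zeroth stage). Thus $q$ admits a section, whence $\S \cotensor_C \S \simeq \S \vee \Susp^{-1} C\range 11$ and the filtration splits. I expect the only genuine work to be this last bookkeeping --- checking that the map ``$\S \cotensor_C \eta$ followed by $\S \cotensor_C C \simeq \S$'', the map induced by the coalgebra map $\eps$, and the projection $\Tot \to \Tot_0$ all agree up to homotopy --- so that the section built from $\eps$ really does split the $q$ coming out of the cofiber sequence. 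The formal--variety hypothesis plays no essential role in this particular argument beyond guaranteeing that the cotensor products in sight are the ones analyzed earlier in the section.
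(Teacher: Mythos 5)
Your proposal is correct and follows the same skeleton as the paper's proof: cotensor the fiber sequence $\S \to C \to M$ against $\S$ via \Cref{CotensoringPreservesFibers}, identify the outer terms, and observe that the result splits. The two places where you diverge from the paper are both improvements in precision rather than genuinely different routes. First, for $\S \cotensor_C C \simeq \S$ the paper invokes the spectral-sequence method of \Cref{CorrectConvergence}, whereas you give the cleaner extra-codegeneracy argument on $\Cobar(\S;C;C)$; both are valid, and yours is arguably more direct. Second, and more importantly, the paper simply asserts that the resulting fiber sequence is ``split'' without explanation, while you actually construct a section of $q\co \S \cotensor_C \S \to \S$ and verify it is one. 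Your observation that $\eps$ is a coalgebra map but \emph{not} a $C$--cobimodule map is exactly right --- so a section cannot be obtained by pushing a cobimodule splitting through $\S \cotensor_C(-)$ --- and the fix of exploiting functoriality of the two-sided cobar construction along the coalgebra maps $\S \xrightarrow{\eta} C \xrightarrow{\eps} \S$ is the correct mechanism: since corestriction along $\eta$ recovers the $C$--comodule structure on $\S$ from the trivial one, the induced maps $\S \simeq \S\cotensor_\S\S \xrightarrow{s} \S\cotensor_C\S \xrightarrow{q'} \S\cotensor_\S\S \simeq \S$ compose to the identity, and the final bookkeeping check that $q' = q$ (both restrict to the identity on $\Tot_0$) is straightforward. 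Your closing remark that the formal-variety hypothesis enters only through the earlier identifications, not through the splitting itself, is also accurate. In short, this is the paper's proof with the one elided step correctly filled in.
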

\begin{proof}
Using \Cref{CotensoringPreservesFibers}, we cotensor the resolution sequence $\S \to C \to M$ with the $C$--comodule $\S$ to get the new fiber sequence \[\cdots \to \Susp^{-1} \S \cotensor_C M \to \S \cotensor_C \S \to \S \cotensor_C C \to \S \cotensor_C M \to \cdots.\]  We can use the method of \Cref{CorrectConvergence} to identify $\S \cotensor_C C$ with $\S$, and we can use \Cref{TangentSpectrumInOneCotor} to identify $\S \cotensor_C M$ with $C \range 11$.  This yields the split fiber sequence \[\Susp^{-1} C \range 11 \to \S \cotensor_C \S \to \S. \qedhere\]
\end{proof}

The object $\S \cotensor_C \S = \Omega(\S; C; \S)$ is the subject of Koszul duality.  There is the following general result:
\begin{theorem}[{\cite[Proposition 7.26]{Ching}, \cite[Theorem 2.1.11]{GinzburgKapranov}}]
Let $\mathcal O$ be an operad in $k$--module spectra, where $k$ is some algebra spectrum, and let $A$ be a left--module for the operad $\mathcal O$ (i.e., an $\mathcal O$--algebra).  The arboreal bar construction $B(k; \mathcal O; k)$ provides a co-operad $\mathcal O^\vee$ for which $B(A; \mathcal O; k)$ is a left--comodule, and, dually, the arboreal cobar construction $\Omega(k; \mathcal O^\vee; k)$ provides an operad $(\mathcal O^\vee)^\vee$ for which $\Omega(C; \mathcal O^\vee; k)$ is a left--module.  Additionally, the operad produced in this way from the coassociative co-operad is the associative operad, and the co-operad produced from the associative operad is the coassociative co-operad. \qed
\end{theorem}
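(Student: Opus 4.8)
The plan is to split the statement into its two essentially independent halves---a formal ``co/bar yoga'' for operads together with their algebras and comodules, and a concrete computation in the (co)associative case---and to handle them with rather different tools. For the formal half, the first step is to pass to the category of symmetric sequences of $k$--module spectra equipped with the composition (plethystic) product $\circ$, in which operads are associative monoids, co-operads are associative comonoids, an $\mathcal{O}$--algebra $A$ is the special case of a left $\mathcal{O}$--module concentrated in arity $0$, and the object $C$ appearing in the cobar clause is read as a left $\mathcal{O}^\vee$--comodule (for instance $C = B(A;\mathcal{O};k)$). In this language the arboreal bar construction $B(k;\mathcal{O};k)$ is the geometric realization of the simplicial object whose $n${\th} term is $\mathcal{O}^{\circ n}$, the adjective ``arboreal'' recording only that this term, unwound through $\circ$, is indexed by height $n$ rooted trees with vertices decorated by $\mathcal{O}$. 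I would then invoke the standard slogan that the bar construction of an augmented monoid in a monoidal $\infty$--category with the requisite colimits is canonically an augmented comonoid: the comultiplication on $B(k;\mathcal{O};k)$ is the ``horizontal slicing'' that cuts a decorated tree into a stack of two decorated trees, and coassociativity and counitality reduce to the combinatorics of iterating such cuts. Slicing near the $A$--decorated vertex equips $B(A;\mathcal{O};k)$ with a compatible $B(k;\mathcal{O};k)$--comodule structure. Dualizing every arrow, for a co-operad $\mathcal{C}$ the arboreal cobar construction $\Omega(k;\mathcal{C};k)$ is the totalization of the cosimplicial object with $n${\th} term $\mathcal{C}^{\circ n}$; the dual deconcatenation makes it a monoid, hence an operad, with $\Omega(C;\mathcal{C};k)$ a left module over it, and one sets $(\mathcal{O}^\vee)^\vee := \Omega(k;B(k;\mathcal{O};k);k)$, so that the last clause of the theorem is the special case $\mathcal{O} = \mathrm{Ass}$.

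For that concrete half, I would specialize to $\mathcal{O} = \mathrm{Ass}$, where $\mathrm{Ass}(n)$ is the free $k$--module on the regular representation of $\Sigma_n$; equivalently, $\mathrm{Ass}$ is the symmetrization of the non-symmetric operad with a single rank-one space of operations in each positive arity, so that $\mathrm{Ass}(1) = k$ and the reduced operad is concentrated in arities $\ge 2$. Here the arboreal bar construction collapses: once the $\Sigma_n$--bookkeeping is carried through, a tree decorated by the reduced associative operad is pinned down by its underlying planar tree with all internal vertices of arity $\ge 2$, so that $B(k;\mathrm{Ass};k)$ reduces, arity by arity, to a complex of planar trees with edge-contraction differential---in effect the (shifted) cellular chains on the associahedral complexes. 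Running the classical acyclicity of that complex, arity by arity---equivalently, the Koszulity of the associative operad in the sense of Ginzburg--Kapranov---identifies $B(k;\mathrm{Ass};k)$ with the coassociative co-operad $\mathrm{coAss}$, up to the operadic suspension implicit in the bar construction; feeding $\mathrm{coAss}$ into $\Omega$ and repeating the now formally dual computation recovers $\mathrm{Ass}$. Alternatively, one can exhibit the canonical twisting morphism $\mathrm{coAss} \to \mathrm{Ass}$ and verify directly that it is Koszul, which delivers both identifications at once.

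The step I expect to be the genuine obstacle is the passage from these ``monoidal $\infty$--category'' generalities to the actual category of symmetric sequences of spectra. The composition product $\circ$ is not symmetric, so left and right (co)modules must be kept carefully apart; worse, the formal machinery that produces a comonoid structure on $B$ and an operad structure on $\Omega$ (as in Lurie's Bar/Cobar formalism) does not transport verbatim, because $\circ$ is not fully compatible with the geometric realizations and totalizations that define these constructions. Resolving this forces a restriction to reduced operads---those for which $\mathcal{O}(0)$ is trivial and $\mathcal{O}(1)$ is the unit, so that $\mathcal{O}^{\circ n}$ is assembled from highly connected pieces on which the offending (co)limits are controlled---which is Ching's setting and is precisely what the operadic suspensions above are tracking. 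Carrying this out, and then verifying the associative and coassociative identifications, is exactly the content of the cited results of Ching and of Ginzburg--Kapranov; for the purposes of this paper it is enough to invoke them.
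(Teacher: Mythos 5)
Your proposal is correct, and it ends where the paper does: the theorem carries a \qed on its statement because the paper offers no proof, deferring entirely to the cited results of Ching and Ginzburg--Kapranov. Your sketch---bar construction as comonoid in the monoidal category of symmetric sequences under $\circ$, arity-by-arity identification of $B(k;\mathrm{Ass};k)$ with $\mathrm{coAss}$ via acyclicity of the associahedral complexes, and the caveat that $\circ$ fails to commute with realizations so one must restrict to reduced operads---is a faithful summary of what those references establish, and your closing remark that it suffices to invoke them is exactly what the paper does.
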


\begin{corollary}
The spectrum $\S \cotensor_C \S = (T_\eta C)_+$ is an associative algebra spectrum. \qed
\end{corollary}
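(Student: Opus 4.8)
The plan is to recognize $\S \cotensor_C \S$ as the (two--sided) cobar construction on the coaugmented coassociative coalgebra $C$ and then to feed this into the Koszul--duality machine recorded just above, taking the operad $\mathcal{O}$ there to be the associative operad in $\CatOf{Spectra}_\Gamma$.

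First I would record the inputs. The coaugmentation $\eta\co \S \to C$ and the counit $\eps\co C \to \S$ equip $\S$ with the trivial $C$--$C$--cobimodule structure, so that the cobar object $\Cobar(\S; C; \S)$ is defined and, by \Cref{CotensorExistenceLemma}, its limit computes $\S \cotensor_C \S$ --- this is the identification already in force in \Cref{BottomAnnularLayerIsAlmostKoszul}. Next, a coaugmented coassociative coalgebra in $\CatOf{Spectra}_\Gamma$ is precisely a left comodule over the coassociative co-operad, which is the co-operad $\mathcal{O}^\vee$ produced from the associative operad $\mathcal{O}$ by the final clause of the cited theorem; because every tree indexing a nontrivial co-operation of a coassociative co-operad is a linear chain, the arboreal cobar construction $\Cobar(-; \mathcal{O}^\vee; \S)$ degenerates on such a comodule to the ordinary cobar construction of coaugmented coalgebras. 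Identifying $C$ with the corresponding $\mathcal{O}^\vee$--comodule thus gives $\S \cotensor_C \S \simeq \Cobar(C; \mathcal{O}^\vee; \S)$ in the theorem's notation. Finally, the theorem asserts that this cobar construction is a left module --- equivalently, an algebra --- over the operad $(\mathcal{O}^\vee)^\vee$, and its last clause identifies $(\mathcal{O}^\vee)^\vee$ with the associative operad $\mathcal{O}$ itself; a left module over the associative operad is exactly an associative ($A_\infty$) algebra spectrum, which is the claim. The splitting of \Cref{BottomAnnularLayerIsAlmostKoszul} then transports this structure onto the summand decomposition of $\S \cotensor_C \S$.

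The main obstacle is bookkeeping rather than mathematics: one must carefully match the arboreal/operadic formalism of Ching and of Ginzburg--Kapranov against the $\Delta^{\op}_{/[n]}$-- and $\Lambda^{\op}_{/[n]}$--cotensor formalism used in the rest of this section, verifying that on degenerate (linear) trees the two cobar constructions coincide as functors of coaugmented coalgebras and that the Koszul self--duality of the associative operad survives $\Gamma$--localization. A more self-contained alternative, avoiding the general theorem, would construct the multiplication $\Cobar(\S; C; \S) \sm \Cobar(\S; C; \S) \to \Cobar(\S; C; \S)$ directly by concatenating cobar words and inserting the coaugmentation $\eta$ to splice the two interior copies of $\S$, organizing the higher coherences with the chain--gluing construction of $\Lambda^{\op}_{/[n]}$ already in play; this is essentially the content that the cited theorem abstracts.
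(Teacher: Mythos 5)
Your argument matches the paper's: the corollary is stated there with no separate proof precisely because it follows by specializing the cited Koszul duality theorem to the associative operad and its coassociative dual, identifying $\S \cotensor_C \S = \Omega(\S; C; \S)$ with the (degenerate, linear-tree) operadic cobar construction on $C$, exactly as you do. The bookkeeping you flag about matching the arboreal formalism to the $\Delta^{\op}_{/[n]}$/$\Lambda^{\op}_{/[n]}$ setup is indeed the only content, and your direct word-concatenation alternative is the standard underlying construction.
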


There is always a natural map of algebras \[\Tot \Cobar(k; |B(k; A; k)|; k) \to A,\] and when $\sheaf O$ and $A$ are suitably of finite type (e.g., $\sheaf O$ the associative operad and $A$ a finitely generated polynomial algebra), then this map is an isomorphism (i.e., co/bar duality is involutive)~\cite[Proposition 6.4]{Ching}.  It is not clear that such a theorem applies directly here: the category of $\Gamma$--local spectra does not have good finiteness properties, nor do power series algebras.  Instead, we prove the relevant special case of this family of theorems by hand:

\begin{theorem}\label{KoszulDualityForCoalgebraicFormalVarieties}
For $C$ a $\Gamma$--local coalgebraic formal variety spectrum, the natural map \[|B(\S; \S \cotensor_C \S; \S)| \to C\] is a $\Gamma$--local equivalence.
\end{theorem}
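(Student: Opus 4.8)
The plan is to reduce the statement to $K$--homology---which detects $\Gamma$--local equivalences---and then to show that the natural map induces an isomorphism on $K_*$ by equipping its source and target with exhaustive filtrations whose associated gradeds agree and are matched by the map. On the target side I would filter $C$ by the annular tower of \Cref{AnnularTowerDefn}: writing $C\range0n = \fib(C \to C\range{n+1}\infty)$, the fiber sequence $\fib(f) \to \fib(g \circ f) \to \fib(g)$ applied to the composite $C \to C\range n\infty \to C\range{n+1}\infty$ yields fiber sequences $C\range0{n-1} \to C\range0n \to C\range nn$, so the $C\range0n$ form an increasing filtration of $C$ with $C\range00 \simeq \S$ and $n${\th} graded piece $C\range nn \simeq (C\range11)^{\sm n}$ (the main Theorem). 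The one place the formal--variety hypothesis enters is in checking that this filtration is exhaustive: by \Cref{CotensorInterchangeLaw} the tail term $C\range{n+1}\infty \simeq M^{\cotensor_C(n+1)}$ has $K_*(C\range{n+1}\infty) \cong \overline{K_* C}{}^{\cotensor_{K_* C}(n+1)}$, which vanishes in a range of degrees growing with $n$ because $\overline{K_* C}$ is the augmentation ideal of a power series algebra; hence $\colim_n C\range0n \to C$ is a $K_*$--equivalence. This step is precisely the conilpotence of the coalgebra $C$, and nothing further about being a formal variety gets used.

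On the source side I would use the skeletal filtration $\{F_n\}$ of the realization $|B(\S; A; \S)|$, where $A := \S\cotensor_C\S$: its graded pieces are $F_n/F_{n-1} \simeq \Susp^n \overline{A}^{\sm n}$ with $\overline{A} := \cofib(\S \to A)$, and the splitting of \Cref{BottomAnnularLayerIsAlmostKoszul} identifies $\overline{A} \simeq \Susp^{-1} C\range11$, so $F_n/F_{n-1} \simeq \Susp^n(\Susp^{-1}C\range11)^{\sm n} \simeq (C\range11)^{\sm n}$, matching the target; exhaustiveness here is automatic since $|B(\S;A;\S)| = \colim_n F_n$. I would then verify that the natural comparison map $|B(\S; A; \S)| \to C$ is filtered, carrying $F_n$ into $C\range0n$---both filtrations being manufactured from the same (co)simplicial object built out of $C$, with $A = \S\cotensor_C\S$ itself assembled from cobar constructions on $C$. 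On $F_0 = \S$ the map is the identity; on the first graded piece it is the equivalence $C\range11 \xrightarrow{\sim} C\range11$ coming from \Cref{TangentSpectrumInOneCotor} together with the splitting of \Cref{BottomAnnularLayerIsAlmostKoszul}; and since the comparison is compatible with the smash structures on the two associated gradeds, it is an equivalence on every $F_n/F_{n-1}$. An induction up the two filtrations, using the cofiber sequences $F_{n-1} \to F_n \to F_n/F_{n-1}$ and $C\range0{n-1} \to C\range0n \to C\range nn$, then shows that $F_n \to C\range0n$ is an equivalence for all $n$; passing to the colimit and invoking the $K_*$--exhaustiveness established above shows the original map is a $K_*$--isomorphism.

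The hard part will be pinning down that the comparison map really is filtered with the asserted behavior on gradeds---that it sends $F_n$ into $C\range0n$, is the identity on the zeroth graded piece, restricts to an equivalence on the first, and is multiplicative for the two smash-monoidal structures on associated gradeds. This is the genuine ``by--hand'' content of Koszul duality in this setting, and executing it means matching the bar skeletal structure of $|B(\S; \S\cotensor_C\S; \S)|$ against the coskeletal structure underlying the annular tower. A further point of care: the argument leans on the annular tower being available for $C$, which \Cref{ComputationalToolsSection} supplies cleanly when $C$ is a formal \emph{curve} but only more delicately when $\dim C > 1$, where $C\range11$ is no longer even--concentrated and the identification $C\range nn \simeq (C\range11)^{\sm n}$ must be teased out of \Cref{CorrectConvergence} rather than quoted from the main Theorem.
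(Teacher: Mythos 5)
Your approach is genuinely different from the paper's and, where it works, it is a correct strategy, but it is both heavier than the paper's argument and narrower in scope.  The paper's proof is a one-paragraph appeal to the inverse function theorem for formal varieties: it observes that both $\Sch K_* C$ and $\Sch K_* |B(\S; \S\cotensor_C\S; \S)|$ are formal varieties, that the natural map is a coalgebra map, and that it is an isomorphism on tangent spaces (the latter coming from \Cref{BottomAnnularLayerIsAlmostKoszul} and \Cref{TangentSpectrumInOneCotor}); the inverse function theorem then upgrades this to an isomorphism of formal varieties, i.e.\ a $K_*$-equivalence, with no induction and no filtered-map bookkeeping.  The ``hard by-hand content'' you flag---showing the comparison is filtered, the identity on $F_0$, an equivalence on $F_1/F_0$, and multiplicative on associated gradeds---is exactly what the inverse function theorem lets the paper skip: a coalgebra map between formal varieties that is an isomorphism on tangent spaces is automatically an isomorphism, and all the higher filtration comparisons come for free.

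Two concrete issues with your version.  First, the justification for exhaustiveness of the annular filtration on $C$ is not right as stated: $K_*(C\range{n+1}\infty) \cong \overline{K_* C}{}^{\cotensor_{K_* C}(n+1)}$ does \emph{not} vanish in any spectral degree (dually it is the nonzero ideal $\m^{n+1} \subseteq k\ps{x}$), and $K_*$ is $2$-periodic so ``a range of degrees growing with $n$'' has no meaning here.  What you actually want (and what ``conilpotence'' correctly points at) is that the ind-system $\{K_*(C\range 0 n)\}_n$ is exhaustive in $K_* C$, equivalently that $\bigcap_n \m^{n+1} = 0$ in $K^0 C$; with that corrected, the colimit statement is fine because $K_*$ commutes with filtered colimits.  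Second and more seriously, the theorem is stated for formal \emph{varieties} of arbitrary dimension, but the annular tower on which your whole comparison rests is only constructed for formal \emph{curves}: the footnote to \Cref{CoskelTowerIsProconstant} is explicit that in higher dimension the $\Cotor$ groups fail to be even and this ``harms the induction irreparably,'' so there is no annular tower to filter against.  Your closing caveat says the higher-dimensional case is merely ``more delicate,'' but as the paper is written that case is genuinely unavailable to your method; the inverse function theorem argument, by contrast, is dimension-agnostic.  So your proof, once the exhaustiveness step is repaired and the filtered-comparison step is actually carried out, recovers the curve case of the theorem---which is the case used everywhere else in the paper---but does not recover the statement as stated.
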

\begin{proof}
The two coalgebraic formal schemes \[\Sch K_* C, \quad \Sch K_* |B(\S; \S \cotensor_C \S; \S)|\] are both formal varieties, and the natural map is both a map of coalgebraic schemes and an isomorphism on tangent spaces.  The inverse function theorem for formal varieties thus applies, so that the map is an isomorphism of formal varieties, i.e., a $K_*$--equivalence.
\end{proof}

\begin{remark}
We can use \Cref{KoszulDualityForCoalgebraicFormalVarieties} to think of the attaching map $H$ in
\begin{center}
\begin{tikzcd}
\Susp^{-1} C\range 22 \arrow{r} \arrow[equal]{d} & C \range 11 \arrow{r} \arrow[equal]{d} & C \range 12 \arrow[equal]{d} \\
\Susp^{-1} (T_\eta C)^{\sm 2} \arrow["H"]{r} & T_\eta C \arrow{r} & C\range12
\end{tikzcd}
\end{center}
as a kind of ``Hopf map for $\S \cotensor_C \S$''.
\end{remark}


\section{Projective spaces, formal curves, and Picard elements}
\label{ChapContangentSpaceConstruction}
\label{CalculationForCoalgebraicFVSes}

In this section, we shift attention from theory-building to examples and computations.

\subsection{General features}\label{FormalLines:GeneralFeatures}

We continue to assume that $C$ is a coalgebraic formal curve spectrum, i.e., $K^0 C$ is abstractly isomorphic to a $1$--dimensional power series ring.  In addition to the results of the previous Section, the main consequences of this specialization stem from the following definition and theorem:

\begin{definition}
A spectrum $L$ is said to be \textit{$\Gamma$--locally invertible} if there is some other spectrum $L^{-1}$ with $L \sm L^{-1} \simeq \S$ in the $\Gamma$--local category.  The collection of isomorphism classes of such spectra forms an abelian group, called the \textit{Picard group} of the $\Gamma$--local stable category.
\end{definition}

\begin{theorem}[{\cite[Theorem 1.3]{HMS}}]\label{HMSThm}
A spectrum $L$ is $\Gamma$--locally invertible if and only if $K_* L$ is a $K_*$--line (i.e., a $1$--dimensional $K_*$--vector space, or a $\otimes$--invertible object in the category of $K_*$--vector spaces). \qed
\end{theorem}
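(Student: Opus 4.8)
The plan is to prove the two implications separately, the reverse one carrying essentially all the weight. For the forward implication, suppose $L$ is $\Gamma$--locally invertible, say $L \sm L^{-1} \simeq \S$ in $\CatOf{Spectra}_\Gamma$. Since $K = K_\Gamma$ is a field spectrum with graded--field coefficients $K_* = k[u^\pm]$, it carries a Künneth isomorphism, and this survives $\Gamma$--localization because $K_*$ is insensitive to it. Applying Künneth to the equivalence yields $K_* L \otimes_{K_*} K_* L^{-1} \cong K_*$, and since tensoring over a graded field multiplies dimensions, $\dim_{K_*} K_* L = 1$; that is, $K_* L$ is a $K_*$--line. This half is purely formal.

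For the converse, suppose $K_* L$ is a $K_*$--line. The candidate inverse is the Spanier--Whitehead dual $DL$ formed in $\CatOf{Spectra}_\Gamma$, and the argument has three steps. First, a $K_*$--line is in particular finite--dimensional, so I would invoke the structural fact that a $\Gamma$--local spectrum with finite--dimensional Morava $K$--homology is dualizable in $\CatOf{Spectra}_\Gamma$; this realizes $DL$ as a genuine dual, equipped with coevaluation and evaluation maps satisfying the triangle identities. Second, for dualizable $L$ the symmetric monoidal functor $K \sm -$ carries this duality data to duality data of graded $K_*$--vector spaces, identifying $K_*(DL)$ with the $K_*$--linear dual $(K_* L)^\vee$ and the counit with the algebraic evaluation pairing. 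Third, the map $L \sm DL \to \S$ then induces on $K$--homology the evaluation $K_* L \otimes_{K_*} (K_* L)^\vee \to K_*$, which is an isomorphism precisely because $K_* L$ is one--dimensional; since $K$--homology detects $\Gamma$--local equivalences (cf.\ the discussion opening \Cref{ComputationalToolsSection}), $L \sm DL \to \S$ is a $\Gamma$--local equivalence, and $L$ is invertible with inverse $DL$.

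The main obstacle is the first step: deducing dualizability in $\CatOf{Spectra}_\Gamma$ from finiteness of Morava $K$--homology. This is genuinely nontrivial, since $\CatOf{Spectra}_\Gamma$ is not compactly generated in the naive sense and the Hurewicz map $\pi_* L \to K_* L$ need not be surjective, so one cannot simply strip off cells visible in homotopy. The standard route --- in the spirit of Hovey--Strickland's analysis of $K$--theory and localization --- is an induction on $\dim_{K_*} K_* L$: using a generalized Moore spectrum $\S / (p^{i_0}, v_1^{i_1}, \dots, v_{\height \Gamma - 1}^{i_{\height \Gamma - 1}})$ of type $\height \Gamma$ as a source of finite complexes, I would produce a map from the $\Gamma$--localization of such a complex into $L$ that is nonzero on $K$--homology, pass to the fiber to strictly lower the dimension, and after finitely many steps exhibit $L$ as built from dualizable pieces by cofiber sequences. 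Everything else --- the two Künneth computations and the evaluation argument --- is routine bookkeeping. Finally, it is worth recording that a ``$K_*$--line'' may be generated in odd degree (e.g.\ $K_* \S^1 = \Susp K_*$), so the Picard group of graded $K_*$--modules is $\Z / 2$ rather than trivial; but both classes are realized by honest invertible spectra ($\S$ and $\S^1$), so this subtlety does not disturb either implication.
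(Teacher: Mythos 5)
The paper offers no argument for this statement at all: it is quoted from Hopkins--Mahowald--Sadofsky \cite[Theorem 1.3]{HMS} with a citation in place of a proof, so there is nothing internal to compare you against. Your outline reproduces the standard proof from the literature. The forward implication via the K\"unneth isomorphism is fine, and the converse is the usual one: take the $\Gamma$--local Spanier--Whitehead dual $DL$, use that a $\Gamma$--local spectrum with finite--dimensional $K$--homology is dualizable, observe that the monoidal functor $K \sm -$ carries the duality data to $K_*$--linear duality so that the evaluation $L \sm DL \to \S$ induces $K_*L \otimes_{K_*} (K_*L)^\vee \to K_*$, which is an isomorphism for a line, and conclude by the fact that $K_*$ detects $\Gamma$--local equivalences. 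Granting the dualizability input as a citation---it is exactly \cite[Theorem 8.6]{HoveyStrickland}, a source the paper already leans on heavily---your argument is correct and is essentially the proof the cited reference gives.

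The one place you should not trust your own sketch is that dualizability input. The induction you propose---find a map from a localized generalized Moore spectrum $F$ into $L$ that is nonzero on $K$--homology, pass to the fiber, and ``strictly lower the dimension''---does not terminate as stated. A type--$\height\Gamma$ generalized Moore spectrum has $\dim_{K_*} K_* F$ equal to $2^{\height\Gamma}$ for the usual choices (in any case $>1$), so the long exact sequence gives \[\dim_{K_*} K_*(\fib) \;=\; \dim_{K_*} K_* F + \dim_{K_*} K_* L - 2\operatorname{rank}(K_* f),\] which is in general \emph{larger} than $\dim_{K_*} K_* L$; unlike killing generators of a module over a field, coning off a localized finite complex adds $K$--homology, so ``finitely many steps'' is not available and one must instead argue that $L$ lies in the thick subcategory of dualizables by different bookkeeping. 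Moreover, even the first step---producing a map $\Susp^k L_\Gamma F \to L$ nonzero on $K_*$---is a nonvanishing claim about the $K$--based Hurewicz image in $\pi_*(DF \sm L)$, which is not automatic for a general $\Gamma$--local spectrum and is precisely where the Hovey--Strickland proof does its real work (via pro-freeness of $E^\vee$--homology and the $E$--based Adams spectral sequence). So either cite the dualizability theorem outright, in which case your proof is the standard correct one, or be prepared to replace your inductive sketch wholesale; as written it would not substitute for the citation.
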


\begin{corollary}\label{TetaGivesPicardElements}
When $C$ is a pointed coalgebraic formal curve spectrum, $T_\eta C$ (and, indeed, any annular stratum $C\range{n}{n}$) gives an element of the Picard group of the $\Gamma$--local stable category. \qed
\end{corollary}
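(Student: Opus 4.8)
The plan is to reduce the whole statement to \Cref{HMSThm}: a $\Gamma$--local spectrum is invertible exactly when its Morava $K$--homology is a $K_*$--line, so it is enough to verify that $K_* C\range n n$ is a $1$--dimensional $\pi_* K$--vector space for every $n \ge 0$. I would prove the case $n = 1$ (that is, $T_\eta C$) first and then bootstrap to all $n$.

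For $n = 1$: the interchange law recorded in \Cref{TangentSpectrumOfCoalgebraicCurve} gives $K^* C\range 1 1 \cong T^*_{\eta_K} C_K$. By the standing curve hypothesis, $C_K = \Sch K_0 C$ is a one--dimensional formal variety, so $K^0 C$ may be written as $(\pi_* K)\ps x$ with augmentation ideal the principal ideal $\m = (x)$ generated by a non--zero--divisor; hence $T^*_{\eta_K} C_K = \m / \m^2$ is free of rank $1$ over $\pi_* K$, concentrated in a single (even) degree. Since $C\range 1 1$ therefore has finitely generated free $K$--cohomology, its $K$--homology is the $\pi_* K$--linear dual of $K^* C\range 1 1$ and so is again $1$--dimensional. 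Thus $K_* T_\eta C$ is a $K_*$--line and \Cref{HMSThm} exhibits $T_\eta C$ as $\Gamma$--locally invertible.

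For general $n$ I see two routes. The quickest is to invoke $C\range n n \simeq (C\range 1 1)^{\sm n}$ (from the main Theorem) together with the fact that the Picard group is closed under $\sm$. Alternatively, one can argue directly: \Cref{AnnularTowerDefn} provides the fiber sequence $C\range n n \to C\range n \infty \to C\range{n+1}\infty$ with $C\range n\infty = M^{\cotensor_C n}$, and running the interchange isomorphism of \Cref{CotensorInterchangeLaw} through $K$--cohomology identifies $K^* C\range n\infty$ with the ideal power $\m^n \subseteq (\pi_* K)\ps x$ and the pinch map with the inclusion $\m^{n+1} \hookrightarrow \m^n$. As that inclusion is injective in every degree, the long exact sequence of the fiber sequence collapses to $K^* C\range n n \cong \m^n / \m^{n+1}$, which is free of rank $1$ on $x^n$; dualizing as before makes $K_* C\range n n$ a $K_*$--line, and \Cref{HMSThm} finishes the argument.

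The only genuine labor is bookkeeping: transporting the $K$--homology statements of \Cref{CotensorInterchangeLaw} and \Cref{CoskelTowerIsProconstant} into the $K$--cohomology language used above, and tracking the even--concentration and finite freeness over the graded field $\pi_* K$ that make $K^*$ and $K_*$ linear duals and force the relevant long exact sequences to split. Because every module in sight is even and finitely generated free over $\pi_* K$, I expect this to be entirely routine; there is no serious obstacle, as the corollary simply repackages the curve hypothesis through \Cref{HMSThm}.
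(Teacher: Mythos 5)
Your second route is sound and matches the paper's (implicit) argument: \Cref{CotensorInterchangeLaw} and the duality $K_* C^\vee \cong K_*\ps{x}$, $K_* M^\vee \cong \m = (x)$ identify $K^* C\range{n}{\infty}$ with the ideal power $\m^n$ and the pinch map with the inclusion $\m^{n+1} \hookrightarrow \m^n$; the long exact sequence of the fiber sequence of \Cref{AnnularTowerDefn} then degenerates to $K^* C\range{n}{n} \cong \m^n/\m^{n+1} = \langle x^n \rangle$, a rank-one free $K_*$-module, and \Cref{HMSThm} closes the argument. The bookkeeping you flag (even concentration, finite freeness so $K^*$ and $K_*$ are linear duals) is indeed routine given that $K_*$ is a graded field and $\Sch K_0 C$ is a one-dimensional formal variety.

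However, the ``quickest'' first route is circular and you should not rely on it. The identification $C\range{n}{n} \simeq (C\range{1}{1})^{\sm n}$ is \Cref{IdentificationOfPowers}, which comes \emph{after} \Cref{TetaGivesPicardElements} in the paper and imposes the additional hypothesis $p \gg d$. More importantly, the proof of \Cref{IdentificationOfPowers} works by applying continuous $E$-homology and then appealing to \Cref{EthyDeterminesPic} to lift the $E^\vee$-level isomorphism back to a $\Gamma$-local equivalence of spectra; but \Cref{EthyDeterminesPic} applies only to objects already known to be $\Gamma$-locally invertible, which is precisely the conclusion of \Cref{TetaGivesPicardElements}. So invoking the smash-power identification here presupposes what you are trying to prove. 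Keep the second route and discard the first.
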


This corollary can be viewed in two ways.  First, it can be used to construct elements of the Picard group of the $\Gamma$--local stable category by finding examples of pointed coalgebraic formal curve spectra.  Second, given such a coalgebraic formal curve spectrum, the annular tower can be used to give a kind of cellular decomposition of $C$, by considering the dual tower
\begin{center}
\begin{tikzcd}
* \arrow{r} \arrow{d} & C\range00 \arrow{r} \arrow{d} & C\range01 \arrow{r} \arrow{d} & C\range02 \arrow{r} \arrow{d} & \cdots \\
C\range0\infty \arrow[equal]{r} \arrow{d} & C\range0\infty \arrow[equal]{r} \arrow{d} & C\range0\infty \arrow[equal]{r} \arrow{d} & C\range0\infty \arrow[equal]{r} \arrow{d} & \cdots \\
C\range0\infty \arrow{r} & C\range1\infty \arrow{r} & C\range2\infty \arrow{r} & C\range3\infty \arrow{r} & \cdots
\end{tikzcd}
\end{center}
where each vertical sequence is a fiber sequence.  Using \Cref{TetaGivesPicardElements}, and thinking of Picard elements as ``generalized spheres'', this gives a $\Gamma$--local cellular decomposition of $C$ in terms of $\Gamma$--local Picard elements which are perhaps \emph{not} standard spheres.

\subsection{Classical projective spaces}\label{ExampleTCPinfty}

We now work through the examples where $C$ is taken to be a classical projective space.  In light of \Cref{IdentificationOfPowers}, we will be especially interested in determining the homotopy type of $T_\eta C$.

Let $C = \Susp^\infty_+ \CP^\infty$, with $\eta\co \S \to \Susp^\infty_+ \CP^\infty$ its natural pointing.  In this case, we apply the version of \Cref{CotangentSpectrumDefn} in the global stable category.  However, the global stable category is not local for a field spectrum, so we appeal to the auxiliary spectra $K = H\Q$ and $K = H\F_p$ in order to analyze the coskeletal tower.  Here, we find
\begin{align*}
H\Q_* T_+ \Susp^\infty_+ \CP^\infty & \cong \Susp^2 \Q, \\
(H\F_p)_* T_+ \Susp^\infty_+ \CP^\infty & \cong \Susp^2 \F_p.
\end{align*}
It follows from Sullivan's adelic reconstruction~\cite[Proposition 3.20]{Sullivan} that the integral homology is \[H\Z_* T_+ \Susp^\infty_+ \CP^\infty \cong \Susp^2 \Z,\] and since $T_+ \Susp^\infty_+ \CP^\infty$ is a \emph{connective} spectrum,\footnote{Similar considerations may be used to describe the entire Picard group of the stable category~\cite[Theorem 2.2]{StricklandInterpolation}.} it follows furthermore that its homotopy type is \[T_+ \Susp^\infty_+ \CP^\infty \simeq \S^2.\]  To verify that the annular tower records the cellular decomposition of $\CP^\infty$, we transfer to the dual of the annular tower as in \Cref{FormalLines:GeneralFeatures}.  By connectivity, it again follows that $C\range m n \simeq \CP\range m n$.  The global analysis of $C = \Susp^\infty_+ \HP^\infty$ proceeds similarly.  We may also perform an identical rational analysis of $K(\Q, 2n)$ to produce $T_+ \Susp^\infty_+ K(\Q, 2n) \simeq \S^{2n}_{\Q}$.

We can also address $C = \Susp^\infty_+ \RP^\infty$ and $K = K(\infty) = K_{\G_a} = H\F_2$ (where we use the more general hypothesis available to \Cref{CorrectConvergence}, rather than the even-concentration shortcut).  By similar reasoning to the complex projective case, performing the tangent space construction in the $2$--adic stable category (in fact, because our objects are connective, it suffices to consider the $\G_a$--local category over $\F_2$) yields \[T_+ \Susp^\infty_+ \RP^\infty \simeq (\S^1)^\wedge_2,\] and the annular tower again recovers the cellular decomposition of $\RP^\infty$.

Finally, it's worth remarking that the ambient category chosen to perform the tangent space construction is very important.  It can neither be too localized nor too delocalized:
\begin{itemize}
\item Passing to the $\G_m$--local category (or, generally, the $\Gamma$--local category for $\Gamma$ defined over $k$ of characteristic $2$) factors through $2$--completion.\footnote{\ldots but not through $H\F_2$--localization.}  Ravenel has shown there is an equivalence \[L_{\G_m} \RP\range{8j+1}\infty \simeq L_{\G_m} \S^{-1}\] for any $j$~\cite[Theorem 9.1]{RavenelLocalizationWRTPeriodic}.  Taking $j = 0$, one sees $L_{\G_m} \RP^\infty \simeq L_{\G_m} \S^{-1}$, so its $K(1)$--cohomology is not a power series ring, and it is furthermore too small to have the correct $\Cotor$ groups.  Letting $j$ range, it's also plain that the bar filtration looks wildly different from the behavior of any sort of expected annular filtration.
\item On the other hand, $\RP^\infty$ is $p$--locally acyclic for $p \ne 2$.  It follows that the integral homology of $T_+ \Susp^\infty_+ \RP^\infty$, as computed in the global stable category, will not be a $\Z$--line.
\end{itemize}

\subsection{Determinantal projective space}
\label{SectionAnalyzingTetaForSdet}

We dedicate this subsection to a particularly interesting set of examples, stemming from a calculation of Ravenel and Wilson:
\begin{theorem}[{Ravenel--Wilson~\cite{RavenelWilson}, see also Johnson--Wilson~\cite[Appendix]{JohnsonWilson} and Hopkins--Lurie~\cite[Section 2]{HopkinsLurie}}]\label{RavenelWilsonForKthy}
Take the ambient prime $p$ to satisfy $p \ge 3$.\footnote{The version of this result due to Hopkins--Lurie does not impose this constraint.} There is a Hopf ring isomorphism \[\bigoplus_{q=0}^\infty K_* B^q S^1[p^j] \cong \bigoplus_{q=0}^\infty (K_* BS^1[p^j])^{\wedge q},\] where the target is the free alternating Hopf ring on $K_* BS^1[p^j]$.  Moreover, for fixed $q \ge 1$ the system $\{B^1 S^1[p^j]_K\}_j$ has the structure of a connected $p$--divisible group of dimension $\binom{d-1}{q-1}$, where $B^1 S^1[p^j]_K$ is the $p^j$--torsion subgroup.  The corresponding formal group is given by $(B^q S^1)_K$. \qed
\end{theorem}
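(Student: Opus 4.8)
The plan is to follow the inductive computation of Ravenel and Wilson~\cite{RavenelWilson} (with the streamlinings of Johnson--Wilson~\cite{JohnsonWilson}); I sketch only its skeleton. The engine is the homological bar spectral sequence
\[
\Tor^{K_* B^{q-1}S^1[p^j]}_{*,*}(K_*,K_*) \Longrightarrow K_* B^q S^1[p^j],
\]
which is the linear dual of the cobar spectral sequence of \Cref{NaiveCotorSSeq} (its convergence is governed by \cite[Theorem 7.1]{Boardman}), applied to the identification $B^q S^1[p^j] \simeq B(B^{q-1}S^1[p^j])$. First I would settle the base case $q=1$ directly: the fiber sequence $BS^1[p^j]\to \CP^\infty \xrightarrow{p^j}\CP^\infty$ identifies $K^0 BS^1[p^j]$ with $K^0\CP^\infty/([p^j](x))\cong K^0\ps{x}/([p^j](x))$, a finite free $K^0$--module of rank $p^{jd}$ concentrated in even degrees; dually, $\{BS^1[p^j]_K\}_j$ is the $p$--divisible group $\G[p^\infty]$ of the formal group $\G=\Spf K^0\CP^\infty$, of height $d$ and dimension $1$.

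For the inductive step I would assume $K_* B^{q-1}S^1[p^j]$ has been identified, as a Hopf algebra over $K_*$, with the weight--$(q-1)$ piece of the free alternating Hopf ring on $K_* BS^1[p^j]$ --- concretely a finite tensor product of $p^j$--truncated divided--power coalgebras on single primitives. The resulting $\Tor$ algebra on the $E^2$--page is then again a tensor product of exterior and truncated divided--power pieces, matching the weight--$q$ piece of the same free alternating Hopf ring on the nose. The key point is that the spectral sequence collapses at $E^2$: for $p\ge 3$ every contributing group lies in even internal degree, so there is no room for differentials, and the K\"unneth isomorphism for $K$ together with the freeness of the $E^2$--input forbids multiplicative extension problems. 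This is the one place the hypothesis $p\ge 3$ is genuinely used --- at $p=2$ the weight--$1$ classes have nonzero squares, the answer is polynomial rather than exterior, and even--concentration fails. Carrying the induction also shows that $K^0 B^q S^1[p^j]$ is finite free over $K^0$ of rank $p^{j\binom{d}{q}}$, with cotangent space at the identity of dimension $\binom{d-1}{q-1}$; in particular $B^q S^1[p^j]$ is $K$--acyclic once $q>d$.

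With the layers computed, I would assemble the comparison map of the statement from the Hopf ring structure on $\bigoplus_q K_* B^q S^1[p^j]$ --- which arises from the ring spectrum structure on the mod--$p^j$ Eilenberg--MacLane spectrum, equivalently from the bilinear product maps $B^{q_1}S^1[p^j]\times B^{q_2}S^1[p^j]\to B^{q_1+q_2}S^1[p^j]$. The universal property of the free alternating Hopf ring furnishes a canonical Hopf ring map out of the right--hand side; it is surjective in each weight precisely because the collapse of the bar spectral sequence says that each new layer is generated over its predecessor by transgressed (hence circle--product) images of weight--$1$ classes, and it is injective because the rank count above shows that both sides carry the same finite Poincar\'e series in every weight. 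Finally, for the $p$--divisible group statement I would fix $q$ and vary $j$: each $B^q S^1[p^j]_K=\Spec K^0 B^q S^1[p^j]$ is a finite flat commutative group scheme of order $p^{j\binom{d}{q}}$ (the Hopf algebra structure coming from $K^0$ of the $H$--space $B^q S^1[p^j]$), the multiplication--by--$p$ maps are faithfully flat with the expected kernels (visible from the Hopf ring relations, since $p$ acts by $\ast$--multiplication with the weight--$0$ class $[p]$), and these assemble into a $p$--divisible group of height $\binom{d}{q}$ and dimension $\binom{d-1}{q-1}$; its colimit over $j$ is a $\binom{d-1}{q-1}$--dimensional formal group, identified with $(B^q S^1)_K=\Spf K^0 B^q S^1$ via the Bockstein maps $B^q S^1[p^j]\to B^q S^1$ and the fact that Morava $K$--theory sees only the $p$--completion.

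The step I expect to be the main obstacle is exactly the collapse of the bar spectral sequence together with the vanishing of its extension problems: this is the technical heart of Ravenel--Wilson, and it rests on tracking the precise Hopf--algebra structure of $K_* B^{q-1}S^1[p^j]$ with enough care to both compute the $E^2$--page and certify its even--concentration at every stage of the induction. An alternative, more conceptual route that also dispenses with the $p\ge 3$ hypothesis is the ambidexterity argument of Hopkins--Lurie~\cite{HopkinsLurie}, but it deploys considerably more machinery than the present applications require.
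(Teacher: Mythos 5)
The paper itself offers no proof of this statement---it is imported verbatim from the literature with a citation---so the only thing to assess is whether your sketch faithfully reproduces the cited Ravenel--Wilson argument, and at its central step it does not. Your claim that the bar spectral sequence $\Tor^{K_* B^{q-1}S^1[p^j]}_{*,*}(K_*,K_*) \Rightarrow K_* B^{q}S^1[p^j]$ collapses at $E^2$ ``because every contributing group lies in even internal degree'' is false. For a truncated polynomial (dually, divided-power) Hopf algebra input, the $E^2$-term has the form of an exterior algebra on suspension classes tensored with a divided power algebra on transpotence classes: the suspension classes sit in bar degree $1$ and even internal degree, hence \emph{odd total degree}, and the divided power factors are infinite over $K_*$. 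A rank count shows collapse cannot possibly hold: for $q > d$ the space $B^q S^1[p^j]$ is $K$--acyclic while $E^2$ is infinite-dimensional, and already for $d=q=2$, $j=1$ the abutment has rank $p$ while $E^2$ contains a full divided power algebra. The technical heart of Ravenel--Wilson is exactly the determination of these nontrivial differentials (and of the Frobenius-type multiplicative extensions identifying surviving divided-power classes with $p$-th powers of circle-product generators), carried out by exploiting the Hopf ring structure and the main relation coming from $[p^j](x)$; the hypothesis $p \ge 3$ enters there, not through an ``evenness forces collapse'' argument, and your parallel assertion that K\"unneth plus freeness ``forbids multiplicative extension problems'' is likewise contrary to how the answer (a tensor product of truncated polynomial algebras of varying heights) is actually assembled.

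Because both the surjectivity and the injectivity of your comparison map with the free alternating Hopf ring rest on ``collapse plus rank count,'' the gap propagates through the rest of the sketch: the rank $p^{j\binom{d}{q}}$, the even-concentration needed to keep the induction alive, and the cotangent-dimension count $\binom{d-1}{q-1}$ are all \emph{outputs} of the differential analysis you have omitted, not inputs available beforehand. The pieces of your proposal that are sound are the base case ($K^0 BS^1[p^j] \cong K^0\ps{x}/([p^j](x))$, of rank $p^{jd}$), the construction of the Hopf ring comparison map from the $\circ$-products, and the formal passage from the computed layers to a connected $p$-divisible group of height $\binom{d}{q}$ and dimension $\binom{d-1}{q-1}$ with formal group $(B^qS^1)_K$; but as written the argument in between would fail at the first delooping beyond $q=1$.
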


Of these examples, the value of $q = 1$ corresponds to the interesting spectrum $\Susp^\infty_+ \CP^\infty$, to which the tools from \Cref{SpectralCotensorSection} apply.  In this other extreme case, $T_+ \Susp^\infty_+ B^d S^1$ has $1$--dimensional $K$--homology, and hence \Cref{TetaGivesPicardElements} and \Cref{IdentificationOfPowers} can again be applied.\footnote{It is interesting to revisit the ideas of \Cref{KoszulDualitySection} in the context of this example: $B^{q-1} S^1$ is the unstable based loopspace of $B^q S^1$, which has very complicated $K$--homology, but $\S \cotensor_{\Susp^\infty_+ B^q S^1} \S$ recovers a ``delooping'' of $B^q S^1$ in the $\Gamma$--local category with simple $K$--homology.  One might also compare this situation with that of $BU(n)$, where the $K$--homology of $\S \cotensor_{\Susp^\infty_+ BU(n)} \S$ agrees with $K_* U(n)$.}  In order to determine the Picard element given by \Cref{TetaGivesPicardElements}, we need a finer invariant than the mere detection property provided by \Cref{HMSThm}.  We find what we seek in the $K(n)$--local version of Morava $E$--homology:

\begin{definition}[{\cite[Definition 8.3]{HoveyStrickland}, \cite[Theorem 12]{StricklandDuality}}]\label{DefnCtsMoravaEThy}
The \textit{continuous Morava $E$--homology} or \textit{$\Gamma$--local Morava $E$--homology} functor is defined by \[E_\Gamma^\vee(X) := \pi_* L_\Gamma (E_\Gamma \sm X).\]  It is valued in topologized modules over $E_{\Gamma*}$ equipped with a continuous coaction of $E_\Gamma^\vee E_\Gamma$.  Equivalently, it can be taken to have values in sheaves on the Lubin--Tate stack for $\Gamma$.  As with the noncontinuous version, we will abbbreviate $E_\Gamma^\vee$ to $E^\vee$ when the fixed formal group $\Gamma$ need not be emphasized.
\end{definition}

\begin{remark}[{\cite[Proposition 8.4.e-f]{HoveyStrickland}}]\label{AcyclicsAndBocksteins}
The notation $E_\Gamma^\vee$ stems from the following result: if $E_\Gamma^*(X)$ is pro-free and even-concentrated, then \[E_\Gamma^\vee(X) = (E_\Gamma^*(X))^\vee\] is its continuous linear dual.  It follows that the acyclics for $E_\Gamma^\vee$--homology agree with the acyclics for $K_\Gamma$--homology, completing the collection mentioned in \Cref{ComputationalToolsSection}~\cite[Proposition 2.5]{HoveyStrickland}.  The main point is that there is a Bockstein spectral sequence taking $K_\Gamma$--homology as input and converging to $E_\Gamma^\vee$--homology.  If the $K_\Gamma$--homology is even--concentrated, this spectral sequence collapses and the $E_\Gamma^\vee$--homology is pro-free on any basis lifting a basis of the $K_\Gamma$--homology.\footnote{This explains the notation: for favorable $X$, $E_\Gamma^\vee(X)$ is the continuous linear dual of $E_\Gamma^*(X)$.}
\end{remark}

\begin{theorem}[{\cite[Proposition 7.5]{HMS}}]\label{EthyDeterminesPic}
There is a factorization
\begin{center}
\begin{tikzcd}
\CatOf{Spectra}_\Gamma \arrow["E_\Gamma^\vee"]{r} \arrow[bend left=20, "K_\Gamma"]{rr} & \CatOf{Modules}_{E_{\Gamma*}, \operatorname{Aut}(\Gamma)} & \CatOf{Modules}_{K_{\Gamma*}} \\
\operatorname{Pic}(\CatOf{Spectra}_\Gamma) \arrow["E_\Gamma^\vee"]{r} \arrow{u} \arrow[bend right=20, "K_\Gamma"]{rr} & \arrow["/\m"]{r} \arrow{u} \CatOf{Lines}_{E_{\Gamma*}, \operatorname{Aut}(\Gamma)} & \CatOf{Lines}_{K_{\Gamma*}} . \arrow{u}
\end{tikzcd}
\end{center}
Both squares are pullback squares: a $\Gamma$--local spectrum is invertible if and only if its continuous $E$--homology is a line, which is true if and only if its $K$--homology is a line.  Additionally, for $p \gg d$ the map labeled $E_\Gamma^\vee$ in the lower-left is an injection on objects.\footnote{Specifically: $2p - 2 \ge d^2$ and $p \ne 2$.} \qed
\end{theorem}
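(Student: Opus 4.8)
The plan is to obtain the two cartesian squares, and hence the chain of equivalences, formally from \Cref{HMSThm} together with the Bockstein spectral sequence of \Cref{AcyclicsAndBocksteins}, and then to treat the injectivity clause separately, as a genuinely harder statement about the exotic Picard group. The diagram commutes by construction, once one notes that $E_\Gamma^\vee$ and the reduction functor $-/\m$ both carry invertibles to lines, so the only content is cartesianness. For the right-hand square, let $M$ be an object of $\CatOf{Modules}_{E_{\Gamma*},\Aut\Gamma}$ --- a topologized $E_{\Gamma*}$-module with compatible continuous $\Aut\Gamma$-action, as in \Cref{DefnCtsMoravaEThy} --- with $M/\m M$ a $K_{\Gamma*}$-line; lifting a generator and running the graded Nakayama lemma over the complete graded-local ring $E_{\Gamma*}$ makes $M$ free of rank one, the $\Aut\Gamma$-action is inherited, so $M$ is a line, while a line manifestly reduces to a line. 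For the left-hand square (equivalently, the outer rectangle), if $X$ is $\Gamma$-locally invertible then $K_* X$ is a $K_*$-line by \Cref{HMSThm}, hence finitely generated and concentrated in a single parity, so the Bockstein spectral sequence of \Cref{AcyclicsAndBocksteins} collapses and $E_\Gamma^\vee X$ is pro-free of rank one on a lift of a generator --- a line; conversely, a line $E_\Gamma^\vee X$ has line reduction $K_* X = (E_\Gamma^\vee X)/\m$, whence invertibility by \Cref{HMSThm}. Traversing the bottom of the now-cartesian diagram yields ``invertible $\iff$ continuous $E$-homology a line $\iff$ $K$-homology a line''.

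For the injectivity clause, note that $E_\Gamma^\vee$ is lax symmetric monoidal and sends invertibles to lines, so it induces a group homomorphism $\varepsilon$ from $\Pic(\CatOf{Spectra}_\Gamma)$ to the group of isomorphism classes of $\CatOf{Lines}_{E_{\Gamma*},\Aut\Gamma}$; ``injection on objects'' is precisely the vanishing, in the stated range, of the exotic Picard group $\kappa := \ker\varepsilon$. So take $L$ with $E_\Gamma^\vee L \cong E_{\Gamma*}$ as an $E_\Gamma^\vee E_\Gamma$-comodule (the twist on a trivialized line being trivial); it suffices to produce an equivalence $\S \xrightarrow{\simeq} L$. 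Feeding $L$ to the $\Gamma$-local $E_\Gamma$-Adams (Devinatz--Hopkins descent) spectral sequence
\[
H^s_c(\Aut\Gamma;\, (E_\Gamma^\vee L)_t) \Rightarrow \pi_{t-s} L,
\]
whose $E_2$-page depends only on $E_\Gamma^\vee L$ as a comodule, exhibits this as (isomorphic to) the spectral sequence computing $\pi_*\S$ in the $\Gamma$-local category. For $p \gg d$ the relevant Morava stabilizer group has no $p$-torsion and finite cohomological dimension of size $\approx d^2$, so the $E_2$-page carries a horizontal vanishing line and converges strongly; and, since $E_{\Gamma*}$ is even, its positive-degree continuous cohomology is sparse in the internal grading on the scale of $2(p-1)$ (supported, image-of-$J$-style, in internal degrees divisible by $\approx 2(p-1)$ within the relevant range). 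Under the hypothesis $2p-2\ge d^2$, $p\ne 2$, these two features are jointly incompatible with any differential issuing from the unit class $1\in H^0_c(\Aut\Gamma;\,(E_\Gamma^\vee L)_0)$, and with any extension interfering in total degree $0$; hence $1$ survives to an $\alpha\in\pi_0 L$ whose image in $(E_\Gamma^\vee L)_0$ is a unit.

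Such an $\alpha$ is a map $\S\to L$ which, by freeness of $E_\Gamma^\vee L$, is an isomorphism on $E_\Gamma^\vee$; reducing mod $\m$ it is a $K$-equivalence (\Cref{AcyclicsAndBocksteins}), hence a $\Gamma$-local equivalence, so $\kappa=0$ and $E_\Gamma^\vee$ is injective on objects. In this argument the elementary part is the Nakayama and Bockstein bookkeeping of the first paragraph; the main obstacle is the last point, namely verifying that the $\approx d^2$ vanishing line and the $\approx 2(p-1)$-sparseness of continuous Morava-stabilizer cohomology really do conspire, in the range $2p-2\ge d^2$, to leave the unit class untouched and force $\kappa=0$. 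Exotic Picard groups are known to be nonzero for small $(p,d)$, so this is the true obstruction rather than a technical convenience, and I would invoke \cite[Proposition 7.5]{HMS} (and the stabilizer-cohomology estimates it rests on) for it rather than reproduce that analysis here.
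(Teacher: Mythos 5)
The paper offers no proof of this theorem: the \verb|\qed| follows the statement immediately, and the entire content is imported from \cite[Proposition~7.5]{HMS}. There is therefore no internal argument to compare yours against; what you have done is reconstruct an outline of the HMS proof together with the routine reductions, and that reconstruction is largely sound. Your handling of the two cartesian squares is correct: graded Nakayama over the complete local ring $E_{\Gamma*}$ shows that $M/\m M$ a $K_{\Gamma*}$--line forces $M$ to be a line, and the Bockstein collapse of \Cref{AcyclicsAndBocksteins} combined with \Cref{HMSThm} gives the equivalence of the three invertibility conditions. You also correctly recognize the injectivity clause as the vanishing of the exotic Picard group $\kappa_d$ in the range $2p-2 \ge d^2$, $p \ne 2$, and the descent (Adams--type) spectral sequence as the tool, deferring the hard estimate to \cite{HMS}; that deferral is appropriate and is effectively what the paper does.

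One caution on the heuristic you offer before the deferral: you attribute the needed sparseness of the $E_2$--page to ``internal degrees divisible by $\approx 2(p-1)$, image-of-$J$-style,'' but $E_\Gamma$ is $2$--periodic, not $2(p-1)$--periodic, so $H^s_c(\Aut\Gamma; (E_\Gamma^\vee L)_t)$ is only constrained to even $t$ and the $2(p-1)$--scale sparseness is not directly visible there. The $2(p-1)$--scale sparseness lives in the $E(d)$--based Adams spectral sequence (where $E(d)_*$ genuinely is concentrated in degrees divisible by $2(p-1)$), which is exactly how the paper itself exploits it in the proof of \Cref{ExistenceOfSTComplexes}; the argument then transports to the $\Gamma$--local setting. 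This imprecision does not damage your proposal since you correctly invoke \cite{HMS} for that step rather than relying on the heuristic, but it is worth flagging that the stated sparseness property, as formulated, is not a property of the spectral sequence you named.
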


We are moved by this theorem to understand the Morava $E$--homology of Eilenberg--Mac Lane spaces, together with the $\Aut \Gamma$ action.  \Cref{RavenelWilsonForKthy} shows that $K_* B^q S^1[p^j]$ is even-concentrated for any choice of $q$, and hence \Cref{AcyclicsAndBocksteins} shows that $E^\vee B^q S^1[p^j]$ is a pro-free module.  We pursue the Hopf ring structure on these objects, in the hopes that this can be used to extract the $\Aut \Gamma$--action.  Taking a cue from the proof of \Cref{AcyclicsAndBocksteins}, we begin by considering the short exact sequence of coefficients:
\[
0 \to \bigoplus_{\substack{t_0, \ldots, t_{d-1} \ge 0 \\ t_+ = r}} K_*\left\{\left[p^{t_0} \cdots u_{d-1}^{t_{d-1}}\right]\right\} \to E_* / \m^{r+1} \to E_* / \m^r \to 0.
\]
Because we know that $E^\vee B^q S^1[p^j]$ is a pro-free (and thus flat~\cite[Theorem A.9]{HoveyStrickland}) $E_*$--module deforming the original $K_* B^q S^1[p^j]$, we tensor against $E^\vee B^q S^1[p^j]$ to get a new short exact sequence appearing as the top row in \Cref{DiagramOfSexseqs}.  We can also build the free alternating Hopf ring $(E^\vee B S^1[p^j])^{\wedge *}$, and tensoring the above short exact sequence of coefficients with any graded piece of this ring gives the exact sequence (which is not, a priori, left exact) on the bottom row of \Cref{DiagramOfSexseqs}.  Finally, the cup product map $(BS^1[p^j])^{\wedge q} \to B^qS^1[p^j]$ induces a map on homology \[(E^\vee BS^1[p^j])^{\otimes t} \to (E^\vee BS^1[p^j])^{\wedge q} \xrightarrow{\circ} E^\vee B^q S^1[p^j].\]  Bifunctoriality of the tensor product induces the map between these rows by $\circ$--product.

\begin{figure}
\begin{center}
\begin{tikzcd}[row sep=0.5cm, column sep=0.5cm, ampersand replacement=\&]
\& 0 \arrow{d} \\
\displaystyle \bigoplus_{\substack{t_0, \ldots, t_{d-1} \ge 0 \\ t_+ = r}} (K_* B S^1[p^j])^{\wedge q}\left\{\left[p^{t_0} \cdots u_{d-1}^{t_{d-1}}\right]\right\} \arrow{r} \arrow{d} \& \displaystyle \bigoplus_{\substack{t_0, \ldots, t_{d-1} \ge 0 \\ t_+ = r}} (K_* B^q S^1[p^j])\left\{\left[p^{t_0} \cdots u_{d-1}^{t_{d-1}}\right]\right\} \arrow{d} \\
(E^\vee BS^1[p^j])^{\wedge q} \otimes_{E_*} E_* / \m^{r+1} \arrow{r} \arrow{d} \& E^\vee B^q S^1[p^j] \otimes_{E_*} E_* / \m^{r+1} \\
(E^\vee BS^1[p^j])^{\wedge q} \otimes_{E_*} E_* / \m^r \arrow{r} \arrow{d} \& E^\vee B^q S^1[p^j] \otimes_{E_*} E_* / \m^r \arrow{d} \\
0 \& 0.
\end{tikzcd}
\end{center}
\caption{Diagram of short exact sequences in \Cref{RavenelWilsonForEthy}.}
\label{DiagramOfSexseqs}
\end{figure}

\begin{theorem}[{cf.\ \cite[Section 3.4]{HopkinsLurie}}]\label{RavenelWilsonForEthy}
There is an isomorphism of Hopf rings \[\bigoplus_{q=0}^\infty (E^\vee B S^1[p^j])^{\wedge q} \cong \bigoplus_{q=0}^\infty E^\vee B^q S^1[p^j].\]
\end{theorem}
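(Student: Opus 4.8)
The plan is to prove that the $\circ$--product map $\Phi=\bigoplus_q\Phi_q$, with $\Phi_q\colon (E^\vee BS^1[p^j])^{\wedge q}\to E^\vee B^q S^1[p^j]$ induced by the cup product maps of spaces $(BS^1[p^j])^{\wedge q}\to B^q S^1[p^j]$, is an isomorphism of underlying $E_*$--modules; since $\Phi$ is manifestly a map of Hopf rings, the Hopf ring statement then follows for free. Both sides are $\m$--adically complete: the target because \Cref{RavenelWilsonForKthy} makes $K_*B^q S^1[p^j]$ even--concentrated, so \Cref{AcyclicsAndBocksteins} makes $E^\vee B^q S^1[p^j]$ pro-free; and the source because the free alternating Hopf ring on a pro-free Hopf algebra is assembled out of completed tensor products. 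Hence it suffices to show that $\Phi_q\otimes_{E_*}E_*/\m^r$ is an isomorphism for each $r$ and then pass to $\lim_r$. The reason for climbing the whole $\m$--adic tower, rather than simply invoking Nakayama, is that we do not yet know the source to be flat over $E_*$.

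First I would settle the base case $r=1$. By naturality of $\Phi$ in the coefficient ring, $\Phi_q\otimes_{E_*}k$ is the $\circ$--product map over $k$; identifying $E^\vee B^q S^1[p^j]\otimes_{E_*}k$ with $K_*B^q S^1[p^j]$ via \Cref{AcyclicsAndBocksteins} and $(E^\vee BS^1[p^j])^{\wedge q}\otimes_{E_*}k$ with $(K_*BS^1[p^j])^{\wedge q}$, it becomes precisely the Ravenel--Wilson isomorphism of \Cref{RavenelWilsonForKthy}. For the inductive step, assume $\Phi_q\otimes_{E_*}E_*/\m^r$ is an isomorphism and inspect \Cref{DiagramOfSexseqs} at this value of $r$: the right-hand column is short exact by flatness of $E^\vee B^q S^1[p^j]$; the top row is a direct sum, indexed by monomials of degree $r$ in the generators of $\m$, of copies of the Ravenel--Wilson isomorphism, hence an isomorphism; and the bottom-row map is an isomorphism by hypothesis. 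A short chase---using only right-exactness of $(E^\vee BS^1[p^j])^{\wedge q}\otimes_{E_*}-$ down the left column, exactness of the right-hand column, and the two known row isomorphisms---then forces $\Phi_q\otimes_{E_*}E_*/\m^{r+1}$ to be an isomorphism (and, as a byproduct, shows the left column to be short exact, so the source is after all flat). Passing to the limit over $r$ and using completeness of both sides finishes the proof.

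The hard part will be the identification, used both in the base case and in the top row of every inductive step, of $(E^\vee BS^1[p^j])^{\wedge q}\otimes_{E_*}k$ with $(K_*BS^1[p^j])^{\wedge q}$: that is, that the formation of the free alternating Hopf ring commutes with base change along $E_*\to k$. This rests on $E^\vee BS^1[p^j]$ being pro-free, so that its reduction modulo $\m$ is $K_*BS^1[p^j]$ as a Hopf algebra, together with the fact that each operation building the free alternating Hopf ring from it---passage to completed tensor powers and the quotient imposing the alternating relations---is compatible with that reduction because its inputs are pro-free and the relations are generated by elements that reduce transparently. Once this compatibility and the pro-freeness of $E^\vee B^q S^1[p^j]$ supplied by \Cref{AcyclicsAndBocksteins} are in hand, the remainder is the bookkeeping recorded in \Cref{DiagramOfSexseqs} and the elementary chase above.
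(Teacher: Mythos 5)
Your proposal matches the paper's proof in all essentials: the same induction on the $\m$--adic filtration, the same base case by reduction to \Cref{RavenelWilsonForKthy} via pro-freeness, the same commutative ladder (\Cref{DiagramOfSexseqs}) with the right column short exact by flatness and the left column a priori only right exact, the same chase pinning down the middle map (and yielding flatness of the source as a byproduct), and the same passage to the limit via completeness / the Milnor sequence. Your observation that the only delicate point is commuting the free alternating Hopf ring construction with reduction mod $\m$ is a fair gloss on what the paper's base-case identification $(E^\vee BS^1[p^j])^{\wedge q} \otimes_{E_*} E_*/\m \cong (K_*BS^1[p^j])^{\wedge q}$ is really using, though the paper treats this as immediate from \Cref{AcyclicsAndBocksteins} rather than flagging it.
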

\begin{proof}
We perform an induction on $r$.  In the base case of $r = 1$, \Cref{RavenelWilsonForKthy} gives a chain of isomorphisms
\begin{align*}
(E^\vee B S^1[p^j])^{\wedge q} \otimes_{E_*} E_* / \m^1 & = (K_* BS^1[p^j])^{\wedge q} & \text{(\Cref{AcyclicsAndBocksteins})} \\
& \xrightarrow{\cong} K_* B^q S^1[p^j] & \text{(\Cref{RavenelWilsonForKthy})} \\
& = E^\vee B^q S^1[p^j] / \m^1. & \text{(\Cref{AcyclicsAndBocksteins})}
\end{align*}
This also tells us that the left-hand vertical map in \Cref{DiagramOfSexseqs} is always an isomorphism.  In particular, this map is injective, as is the first nontrivial horizontal map on the second row, so their composite is injective.  It follows that the first horizontal map on the first row \[\bigoplus_{\substack{t_0, \ldots, t_{d-1} \ge 0 \\ t_+ = r}} (K_* BS^1[p^j])^{\wedge q}\left\{\left[p^{t_0} \cdots u_{d-1}^{t_{d-1}}\right]\right\} \to (E^\vee BS^1[p^j] / \m^{r+1})^{\wedge q}\] is also injective, and thus that the top sequence is short exact.

Then, assume that $\circ$--multiplication induces an isomorphism modulo $\m^r$ for some fixed $r$, i.e., that the right-hand vertical map in the above diagram is an isomorphism of modules.  As the left-hand and right-hand vertical maps are isomorphisms, the center map must be as well.  As $t$ varies, the center maps additionally assemble into a map of graded Hopf rings, and so furthermore induce an isomorphism of such.  Induction provides isomorphisms for all $r$, and the Milnor sequence finishes the argument:
\begin{align*}
E^\vee B^q S^1[p^j] & = \mylim{\CatOf{Modules}_{E_*}}{r} (E^\vee B^q S^1[p^j]) / \m^r \\
& = \mylim{\CatOf{Modules}_{E_*}}{r} (E^\vee B S^1[p^j] / \m^r)^{\wedge q} \\
& = (E^\vee B S^1[p^j])^{\wedge q}. \quad\quad \qedhere
\end{align*}
\end{proof}

\begin{remark}
Given this calculation of the Hopf algebra structure on $E^\vee B^q S^1[p^j]$, one might be led to wonder whether this translates into a presentation of $(B^q S^1[p^j])_E$ as the $q${\th} exterior power of the $p^j$--torsion of $\tilde \Gamma$.  This turns out to be true, but making sense of the statement is quite complicated: Buchstaber--Lazarev~\cite{BuchstaberLazarev} and Goerss~\cite{GoerssDieudonne} showed that Dieudonn\'e theory admits a notion of exterior power, Hedayatzadeh~\cite{HedayatzadehGeneralCase,HedayatzadehFieldCase} showed that alternating powers of $1$--dimensional $p$--divisible groups can be constructed intrinsically in special cases (and are compatible with the Dieudonn\'e theory model), and Hopkins and Lurie~\cite[Section 3.5]{HopkinsLurie} give another manual (but coordinate-free!) construction relevant to the case at hand.
\end{remark}

For the remainder of this section, we will need access to only two values of $j$:
\begin{align*}
H_q & = L_\Gamma \Susp^\infty_+ B^d S^1, &
T_q & = L_\Gamma \Susp^\infty_+ B^d S^1[p].
\end{align*}
We are now in a position to identify the $\Aut \Gamma$--representation structure of the $E_*$--line $E^\vee T_+ H_d$, using the $\Aut \Gamma$--equivariant map \[E^\vee T_1^{\times d} \xrightarrow{\circ} E^\vee T_d.\]  It will be helpful to have access to a presentation of the group $\Aut \Gamma$, which arises as the group of units of the algebra $\End \Gamma$.  Writing $S$ for the Frobenius endomorphism of $\Gamma$, one can take the set $\{1, S, \ldots, S^{d-1}\}$ as a $\W(k)$--basis for $\End \Gamma$~\cite[Theorem A2.2.17]{RavenelGreenBook}.

\begin{lemma}
Let $\beta_1 \in E^\vee H_1$ be an element dual to a coordinate on $\tilde\Gamma$.  The algebro-geometric cotangent space of $(H_d)_E$ is spanned by the dual to the element \[\beta_* = S^0 \beta_1 \otimes S^1 \beta_1 \otimes \cdots \otimes S^{d-1} \beta_1 \in E^\vee H_1^{\times d}\] pushed forward along the $\circ$--product map.
\end{lemma}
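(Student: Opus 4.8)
The plan is to exhibit $\circ(\beta_*)$ --- the image of $\beta_*$ under the $\circ$--product map --- as a generator of the rank--one $E_*$--module $T_0((H_d)_E) = \mathrm{Lie}((H_d)_E) = \mathrm{Prim}(E^\vee H_d)$; dualizing a generator of this line gives a generator of the algebro--geometric cotangent space $T_0^*((H_d)_E)$, which is the assertion. First I would invoke \Cref{RavenelWilsonForEthy} to identify $E^\vee H_d$ with the degree--$d$ piece of the free alternating Hopf ring on $E^\vee H_1$, so that $(H_d)_E$ is the $d$--th exterior power of $\tilde\Gamma$ and the $\circ$--product map $E^\vee H_1^{\times d} \to E^\vee H_d$ realizes the universal alternating multilinear map. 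That $\circ(\beta_*)$ lands in the tangent line is then formal: Hopf--ring distributivity of $\circ$, together with $a \circ [0] = \eps(a)\,[0]$ and the fact that each $S^i\beta_1$ has $\eps(S^i\beta_1) = 0$, forces $\psi(\circ(\beta_*)) = \circ(\beta_*)\otimes 1 + 1 \otimes \circ(\beta_*)$, so $\circ(\beta_*)$ is primitive. The real content is therefore its non--triviality.

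For that I would pass through Dieudonn\'e theory. Write $M$ for the Dieudonn\'e module of the $p$--divisible group $(BS^1[p^\infty])_E = \tilde\Gamma[p^\infty]$; it is free of rank $d$ over $E_*$, and --- deforming over $E_*$ the classical statement that the Dieudonn\'e module of $\Gamma$ is a free rank--one module over $\End\Gamma$ --- it is free of rank one over $\End\Gamma$ with $\beta_1$ a generator. The presentation $\End\Gamma = \bigoplus_{i=0}^{d-1} \W(k)\cdot S^i$ of \cite[Theorem A2.2.17]{RavenelGreenBook} then makes $\{S^0\beta_1, S^1\beta_1, \ldots, S^{d-1}\beta_1\}$ an $E_*$--basis of $M$. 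Invoking the compatibility of exterior powers of one--dimensional $p$--divisible groups with Dieudonn\'e theory (Goerss~\cite{GoerssDieudonne}, Buchstaber--Lazarev~\cite{BuchstaberLazarev}, Hedayatzadeh~\cite{HedayatzadehGeneralCase,HedayatzadehFieldCase}, Hopkins--Lurie~\cite[Section 3.5]{HopkinsLurie}), the Dieudonn\'e module of $\Lambda^d\tilde\Gamma[p^\infty]$ is $\Lambda^d_{E_*} M$, free of rank one on $S^0\beta_1 \wedge \cdots \wedge S^{d-1}\beta_1$, and under \Cref{RavenelWilsonForEthy} the $\circ$--product realizes the exterior power at the level of Dieudonn\'e modules, so $\circ(\beta_*)$ corresponds to this wedge. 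Since $\Lambda^d\tilde\Gamma$ has dimension and height both equal to one, its Dieudonn\'e module is canonically its own Lie algebra $T_0((H_d)_E)$; hence $\circ(\beta_*)$ generates that line, and dualizing finishes the proof.

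The main obstacle is the integral bookkeeping in the second paragraph: matching the Hopf--ring $\circ$--product with the exterior power of Dieudonn\'e modules over $E_*$ itself (rather than merely mod $\m$), and pinning down how the non--invertible Frobenius $S \in \End\Gamma$ acts on the $E^\vee$--modules in play, so that $\{S^i\beta_1\}$ is a genuine $E_*$--basis of $M$ and not just a basis modulo $\m$. A conservative route that sidesteps this is to prove non--triviality after reduction mod $\m$ and then lift: by \Cref{AcyclicsAndBocksteins} the even--concentrated $E^\vee H_d$ is pro--free and deforms $K_* H_d$, so it is enough to show that the image $\bar\beta_*$ of $\circ(\beta_*)$ in $K_* H_d$ spans the rank--one tangent line there; any lift --- in particular the integrally defined $\circ(\beta_*)$ --- then spans, by Nakayama's lemma for complete modules. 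Over $k$ the picture is unambiguous: \Cref{RavenelWilsonForKthy} applies, $\End\Gamma$ acts on the classical Dieudonn\'e module of $\Gamma$ with $S$ the honest Frobenius and $\{S^i\}$ a $\W(k)$--basis, and $\bar\beta_* = \bar\beta_1 \circ S\bar\beta_1 \circ \cdots \circ S^{d-1}\bar\beta_1$ is identified with the unique top $\circ$--product generator $a_0 \circ a_1 \circ \cdots \circ a_{d-1}$ of $K_* B^d S^1$ appearing in the Ravenel--Wilson computation, which is exactly the tangent class.
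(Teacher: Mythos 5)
Your final (``conservative'') paragraph matches the paper's own proof almost exactly: the paper proves the lemma by citing Ravenel--Wilson~\cite[Theorems 5.7.(d) and 9.2.(a)]{RavenelWilson} for the $K$--theory statement and then lifting to $E^\vee$ via the pro--freeness and Bockstein arguments used earlier in the section (\Cref{AcyclicsAndBocksteins}, \Cref{RavenelWilsonForEthy})---which is precisely your Nakayama-style lift of a generator of $K_*H_d$ to $E^\vee H_d$. Your opening two paragraphs constitute a genuinely different, more structural route through Dieudonn\'e theory: exhibit $(H_d)_E$ as $\Lambda^d\tilde\Gamma$, show that $\circ(\beta_*)$ is primitive via Hopf--ring distributivity, and identify it with the top wedge $S^0\beta_1\wedge\cdots\wedge S^{d-1}\beta_1$ of a rank--one Dieudonn\'e module over $\End\Gamma$. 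That argument is illuminating but, as you correctly flag, leans on the compatibility of the $\circ$--product with exterior powers of Dieudonn\'e modules over $E_*$; the paper's own remark following \Cref{RavenelWilsonForEthy} explicitly warns that ``making sense of the statement is quite complicated'' and only sketches that machinery. So the Dieudonn\'e route imports as a black box exactly the subtlety the paper chooses to avoid, while your fallback route is the one that actually closes, and is the one the paper uses. One small caveat worth making explicit in the conservative route: the identification of $\bar\beta_*=\bar\beta_1\circ S\bar\beta_1\circ\cdots\circ S^{d-1}\bar\beta_1$ with Ravenel--Wilson's top $\circ$--product generator $a_0\circ\cdots\circ a_{d-1}$ requires matching their generators $a_i$ with the Frobenius translates $S^i\beta_1$; this is standard but is not verbatim how RW state their result, so a sentence of translation (or a citation to where this dictionary is spelled out, e.g.\ Westerland's~\cite[Proposition 3.21]{Westerland}) would tighten the argument.
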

\begin{proof}
In $K$--theory, this is a rephrasing of results of Ravenel and Wilson~\cite[Theorems 5.7.(d) and 9.2.(a)]{RavenelWilson}.  We then couple these to the lifting methods used elsewhere in this section to conclude the same result for $E$--theory.
\end{proof}

\begin{definition}
Left--multiplication gives a map \[\Aut \Gamma \to GL_{\W(k)}(\End \Gamma)\] and postcomposing with the determinant map defines the \textit{determinant representation}: \[\Aut \Gamma \to GL_{\W(k)}(\End \Gamma) \xrightarrow{\det} \W(k)^\times.\]
\end{definition}

\begin{theorem}[{cf.\ \cite[Proposition 3.21]{Westerland}}]\label{TangentSpectrumOfKZd1IsSdet}
$\Aut \Gamma$ acts on $E^\vee T_+ H_d$ as the determinant representation.
\end{theorem}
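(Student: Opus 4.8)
The plan is to exploit the explicit generator of the cotangent line furnished by the Lemma above. Write $\circ_* \colon E^\vee H_1^{\times d} \to E^\vee H_d$ for the $d$--fold Hopf--ring circle product of $\bigoplus_q E^\vee B^q S^1$, so that, by that Lemma, $E^\vee T_+ H_d$ is the free rank--one $E_*$--module spanned by $\circ_*(\beta_*)$ with $\beta_* = \beta_1 \otimes S\beta_1 \otimes \cdots \otimes S^{d-1}\beta_1$. Because $\circ_*$ is $\Aut\Gamma$--equivariant for the diagonal action on the source (this is the map singled out in the text just before the statement), the whole question reduces to computing, for $g \in \Aut\Gamma$, the scalar by which $g$ moves this basis vector. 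The key structural input is that $\circ_*$ realizes the \emph{free alternating} Hopf ring structure --- \Cref{RavenelWilsonForKthy} in $K$--homology, lifted to $E$--homology by \Cref{RavenelWilsonForEthy} --- so that, restricted to the $E_*$--span of the twisted classes $S^i\beta_1$, it is an alternating $E_*$--multilinear form on a rank--$d$ module; the sought scalar will therefore present itself as a determinant.

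Concretely, I would proceed as follows. First, record that the $E_*$--span $N$ of $\{S^0\beta_1, S^1\beta_1, \ldots, S^{d-1}\beta_1\}$ inside $E^\vee H_1$ is free of rank $d$ --- its reduction modulo $\m$ is the generating module of the free alternating Hopf ring of \Cref{RavenelWilsonForKthy}, and the classes are $K_*$--independent exactly because $\circ_*(\beta_*) \neq 0$ --- and that there is an $\Aut\Gamma$--equivariant identification $N \cong \End\Gamma \otimes_{\W(k)} E_*$ carrying $S^i\beta_1$ to $S^i$, under which $\Aut\Gamma$ acts on the target through the left--multiplication representation of $\Aut\Gamma \subseteq \End\Gamma$ relative to the $\W(k)$--basis $\{1, S, \ldots, S^{d-1}\}$ (with $S^d$ replaced by $p$). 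Granting this, equivariance of $\circ_*$ gives
\[
g \cdot \circ_*(\beta_*) \;=\; \circ_*\!\big( (g\cdot S^0\beta_1) \otimes (g\cdot S^1\beta_1) \otimes \cdots \otimes (g\cdot S^{d-1}\beta_1) \big),
\]
and since the tuple on the right is obtained from $(S^0\beta_1, \ldots, S^{d-1}\beta_1)$ by applying the matrix of left multiplication by $g$, alternating multilinearity of $\circ_*$ on $N^{\otimes d}$ forces
\[
g \cdot \circ_*(\beta_*) \;=\; \det\nolimits_{\W(k)}(g)\,\circ_*(\beta_*).
\]
As $\circ_*(\beta_*)$ spans $E^\vee T_+ H_d$ and $\det_{\W(k)}(g)$ is by definition the value of the determinant representation on $g$, this is exactly the theorem.

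The main obstacle is the identification of $N$ with $\End\Gamma \otimes_{\W(k)} E_*$ --- i.e.\ that the Frobenius--twisted classes $S^i\beta_1$ assemble into a copy of $\End\Gamma$ bearing the left--regular $\Aut\Gamma$--action. In $K$--homology this is the substance of Ravenel--Wilson's description of the Hopf ring of $K_* B^\bullet S^1[p]$ (the results invoked in the proof of the Lemma): their basic generators of $K_* BS^1[p]$ are cyclically permuted by the Frobenius $S$, with $S^d$ acting as $[p] = 0$, so $\Aut\Gamma$ acts through the associated matrix group --- which is precisely why $\beta_*$ is a nonzero generator of the top alternating power. The delicate step is propagating this \emph{equivariantly} to $E$--homology, which is carried out by the same Bockstein/flatness induction that underlies \Cref{RavenelWilsonForEthy}: one checks that the lift of the $\Aut\Gamma$--action is still the left--regular representation, and in particular that the character it produces on the cotangent line is the honest $\W(k)^\times$--valued $\det$ rather than merely its reduction modulo $\m$. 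This last refinement can also be argued softly: the construction is assembled $\Aut\Gamma$--equivariantly out of the $\W(k)$--linear datum $\End\Gamma$ with its basis $\{1, S, \ldots, S^{d-1}\}$, so the resulting character necessarily factors through $\W(k)^\times$, and since it reduces modulo $\m$ to $\det \bmod \m$ it must be $\det$.
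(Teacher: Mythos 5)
Your proposal is correct and follows essentially the same route as the paper's own proof: equivariance of the $\circ$--product, the Ravenel--Wilson free alternating Hopf ring structure lifted to $E^\vee$ via \Cref{RavenelWilsonForEthy}, and the left--multiplication matrix of $g$ on the $\W(k)$--basis $\{1, S, \ldots, S^{d-1}\}$ of $\End\Gamma$ acting on the top alternating power by $\det g$. You merely make explicit (the identification of the span of the twisted classes with $\End\Gamma$, and the passage from $K$-- to $E$--homology) what the paper compresses into the displayed chain of equalities.
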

\begin{proof}
The cup product map \[E^\vee T_1^{\times d} \xrightarrow{\circ} E^\vee T_d\] is surjective and respects the action of $\Aut \Gamma$, as it is induced by a map of spaces.  There is a K\"unneth formula \[E^\vee T_1^{\times d} \cong (E^\vee T_1)^{\otimes d},\] and the stabilizer group intertwines with K\"unneth maps \[E^\vee A \otimes_{E_*} E^\vee B \xrightarrow{\phi} E^\vee(A \times B)\] as \[g \cdot \phi(a \otimes b) = \phi((g \cdot a) \otimes (g \cdot b)).\]  In turn, an element $g \in \Aut \Gamma$ acts on $\beta_*$ by the formula
\begin{align*}
g \cdot (\circ \beta_*) & = \circ( g \cdot \beta_*) \\
& = \circ(g \cdot (S^0 \beta_1 \otimes \cdots \otimes S^{d-1} \beta_1)) \\
& = \circ(g \cdot (S^0 \otimes \cdots \otimes S^{d-1}) \beta_1) \\
& = \circ(\det g (S^0 \otimes \cdots \otimes S^{d-1}) \beta_1) \\
& = (\det g)(\circ \beta_*). \quad\quad \qedhere
\end{align*}
\end{proof}

\begin{corollary}
For $p \gg d$, $T_+ H_d$ models the determinantal sphere $\S^{\det}$.
\end{corollary}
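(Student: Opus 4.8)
The plan is to assemble the pieces already in place: by \Cref{TangentSpectrumOfKZd1IsSdet}, $\Aut\Gamma$ acts on the $E_*$--line $E^\vee T_+ H_d$ through the determinant representation, and by \Cref{HMSThm} (equivalently the top clause of \Cref{EthyDeterminesPic}) $T_+ H_d$ is $\Gamma$--locally invertible, since $K_* T_+ H_d$ is a $K_*$--line by \Cref{TetaGivesPicardElements}. So $T_+ H_d$ defines a class in $\Pic(\CatOf{Spectra}_\Gamma)$ whose image under $E_\Gamma^\vee$ in $\CatOf{Lines}_{E_{\Gamma*},\Aut(\Gamma)}$ is the determinant line. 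The determinantal sphere $\S^{\det}$ is, by definition (cf.\ the references cited in the statement of the main Corollary), the Picard element characterized by exactly this property: its $E_\Gamma^\vee$--homology is the $E_*$--line carrying the determinant action of $\Aut\Gamma$.

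First I would invoke the hypothesis $p \gg d$ so that the bottom-left arrow $E_\Gamma^\vee\co \Pic(\CatOf{Spectra}_\Gamma) \to \CatOf{Lines}_{E_{\Gamma*},\Aut(\Gamma)}$ in \Cref{EthyDeterminesPic} is injective on objects (the footnote there pins this down as $2p-2 \ge d^2$, $p \ne 2$). Then $T_+ H_d$ and $\S^{\det}$, being two Picard elements with the same image under this injection, must be equivalent in the $\Gamma$--local category. This is the whole argument; it is essentially a one-line deduction once \Cref{TangentSpectrumOfKZd1IsSdet} and \Cref{EthyDeterminesPic} are granted.

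The only genuine subtlety is to make sure the definition of $\S^{\det}$ being used is the one that the injectivity clause of \Cref{EthyDeterminesPic} can compare against. I would note that Gross--Hopkins construct $\S^{\det}$ precisely as the Picard-group element whose associated $E_{\Gamma*}$--line realizes the determinant character of $\Aut\Gamma$ (this is the content of the cited results of \cite{HopkinsGrossAnnouncement} and the discussion in \cite{GHMR}), so the identification is immediate. One should also remark that, strictly speaking, \emph{choice-freeness} of the model is built into the construction: $T_+ H_d = H_d\range11$ is produced functorially from the space $B^d S^1$ by \Cref{CotangentSpectrumDefn} and \Cref{AnnularTowerDefn}, with no auxiliary choice of coordinate or deformation, which is the improvement over the usual presentations of $\S^{\det}$.

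The main obstacle, such as it is, is not in this Corollary at all but sits upstream in \Cref{TangentSpectrumOfKZd1IsSdet} and in the curve hypothesis for $H_d$ (that $K^0 H_d$ is a $1$--dimensional power series ring), which rests on the $q=1$, $q=d$ extremes of \Cref{RavenelWilsonForKthy}; given those, and given that \Cref{EthyDeterminesPic} supplies injectivity in the stated range of primes, nothing further is needed here.
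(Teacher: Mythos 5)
Your argument is correct and is exactly the paper's proof: the paper simply couples \Cref{TangentSpectrumOfKZd1IsSdet} with the last (injectivity) clause of \Cref{EthyDeterminesPic}, which is precisely what you spell out. The extra remarks on invertibility via \Cref{TetaGivesPicardElements} and on the definition of $\S^{\det}$ are fine elaborations of the same one-line deduction.
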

\begin{proof}
Couple the above with the last part of \Cref{EthyDeterminesPic}.
\end{proof}

\begin{remark}
$\S^{\det}$ is a familiar sight for chromatic homotopy theorists.  Its first appearance was in work of Hopkins and Gross on describing the $\Gamma$--local homotopy type of the Brown--Comenetz dualizing spectrum~\cite[Theorem 6]{HopkinsGrossAnnouncement}.  It has subsequently played a prominent role in the study of chromatic homotopy theory at the height $2$.  For instance, taking $\Gamma$ to be a formal group of height $2$, it has been shown to span the rest of the torsion--free part of the $\Gamma$--local Picard group~\cite[Theorem 8.1]{BehrensRevisited}.  It also has been used to study duality phenomena relating to topological modular forms; see for example work of Behrens~\cite[Proposition 2.4.1]{BehrensModularDescription} and of Stojanoska~\cite[Corollary 13.3]{Stojanoska}.
\end{remark}

Still assuming $p \gg d$, \Cref{EthyDeterminesPic} allows us to determine the $\Gamma$--local homotopy type of $C \range nn$ as well.  We begin with a general result:
\begin{theorem}\label{IdentificationOfPowers}
For $C$ a coalgebraic formal curve spectrum and $p \gg d$, there is a $\Gamma$--local equivalence $C \range nn \simeq (T^*_\eta C)^{\wedge n}$.
\end{theorem}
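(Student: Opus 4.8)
The plan is to detect both sides through Morava $E$--homology. Set $L := T^*_\eta C = C\range 11$. By \Cref{TetaGivesPicardElements} each $C\range nn$ lies in $\Pic(\CatOf{Spectra}_\Gamma)$, and $L^{\sm n}$ is invertible because a smash of invertibles is; so by the last clause of \Cref{EthyDeterminesPic} --- the one that requires $p \gg d$ --- it suffices to exhibit an $\Aut\Gamma$--equivariant isomorphism $E^\vee C\range nn \cong E^\vee (L^{\sm n})$ of $E_*$--lines. Throughout I would freely pass between $E^\vee(-)$ and the (honest) power series ring $E^0(-)$ via the continuous duality of \Cref{AcyclicsAndBocksteins}, which is available because every spectrum in sight has even, pro-free $E$--homology, and this passage is $\Aut\Gamma$--equivariant.

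First I would compute the two sides as modules over the power series ring $R := E^0 C$, with $\m \subseteq R$ its augmentation ideal. On the right, Künneth (legitimate since $L$ is dualizable) gives $E^0(L^{\sm n}) \cong (E^0 L)^{\otimes_{E_*} n}$ with the diagonal $\Aut\Gamma$--action, and $E^0 L \cong \m/\m^2$ is the cotangent module, by the $E$--theoretic lift of \Cref{TangentSpectrumOfCoalgebraicCurve}. On the left, I would rerun the computations of \Cref{CoskelTowerIsProconstant} and \Cref{CotensorInterchangeLaw} with $E^0$ in place of $K$, identifying $E^0 C\range n\infty$ with the rank-one free $R$--module $\m^n$. (Equivalently: the $K$--cohomological identification $K^0 C\range n\infty \cong \m_{K}^{n}$ is even-concentrated, so the Bockstein spectral sequence of \Cref{AcyclicsAndBocksteins} collapses and $E^0 C\range n\infty$ is its pro-free lift, recognized as $\m^n$ by naturality of the pinch maps.) The fiber sequence $C\range nn \to C\range n\infty \to C\range{n+1}\infty$ of \Cref{AnnularTowerDefn}, together with even-concentration, then collapses the associated long exact sequence into $0 \to \m^{n+1} \to \m^n \to E^0 C\range nn \to 0$, so $E^0 C\range nn \cong \m^n/\m^{n+1}$.

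It remains to compare $(\m/\m^2)^{\otimes_{E_*} n}$ with $\m^n/\m^{n+1}$, and here the curve hypothesis is exactly what makes things go through: $\m$ is principal, so each $\m^k$ is free of rank one, and multiplication in $R$ gives an isomorphism $(\m/\m^2)^{\otimes_{E_*} n} \xrightarrow{\cong} \m^n/\m^{n+1}$. This map is $\Aut\Gamma$--equivariant because the stabilizer group acts on $R = E^0 C$ through its ring structure while fixing the augmentation, hence preserves the $\m$--adic filtration and commutes with multiplication. Dualizing and feeding the result back through \Cref{EthyDeterminesPic} yields the claimed $\Gamma$--local equivalence $C\range nn \simeq L^{\sm n}$. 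The main obstacle is the middle step --- transporting the cotensor-power bookkeeping of \Cref{CoskelTowerIsProconstant}--\Cref{CotensorInterchangeLaw} from $K$--homology to $E$--homology --- and it is precisely this transport, which demands that $E$--homology commute with the cobar totalizations defining the cotensor powers, that imposes the hypothesis $p \gg d$ through the limit-interchange results of \Cref{LimitAppendix}; granting it, the surrounding graded-ring algebra and equivariance checks are routine.
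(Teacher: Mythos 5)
Your endgame coincides with the paper's: both arguments reduce, via the injectivity clause of \Cref{EthyDeterminesPic} at $p \gg d$, to producing an $\Aut\Gamma$--equivariant isomorphism of $E^\vee$--lines, and both exploit principality of $\m$ through the multiplication isomorphism $(\m/\m^2)^{\otimes n} \cong \m^n/\m^{n+1}$ (this is literally the displayed ``inspiration'' that opens the paper's proof). The gap is in your middle step, the claim that $E^0 C\range n\infty$ is identified with the literal submodule $\m^n \subseteq E^0 C$, hence $E^0 C\range nn \cong \m^n/\m^{n+1}$ equivariantly. Neither of your two justifications delivers this. Rerunning \Cref{CoskelTowerIsProconstant} and \Cref{CotensorInterchangeLaw} with $E$ in place of $K$ is not underwritten by \Cref{LimitAppendix}: the inverse-limit technology there is for field spectra only, and the paper is explicit that $E$--homology does not enjoy these properties. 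The Bockstein route gives strictly less than you need: evenness and pro-freeness identify $E^0 C\range n\infty$ as an abstract pro-free module whose reduction modulo $\m_{E_0}$ is the $n$\th\ power of the augmentation ideal of $K^0 C$, but they do not pin down its image in $R = E^0 C \cong E_0 \ps{x}$. For $n = 2$ and $u \in \m_{E_0}$ nonzero, the ideals $(x^2)$ and $(x^2 + ux) = x(x+u)R$ are both contained in $\m$, both reduce to the square of the augmentation ideal of $K^0C$, and both have even, pro-free quotients of the correct rank; yet they are distinct submodules, both subquotient lines $\m/(x^2) \cong E_0 \cong \m/(x^2+ux)$ are abstractly free of rank one, and nothing you have invoked determines which occurs or what $\Aut\Gamma$--action the quotient line carries --- and that action is exactly the datum the proof must compute. (For $n = 1$ your argument does close: the null composite $\S \to C \to C\range 1\infty$ forces the image into $\m$, and completed Nakayama finishes; for $n \ge 2$ no containment in $\m^n$ is available a priori.)

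The paper's proof is engineered to avoid exactly this: it realizes your algebraic comparison by a zig-zag of spectrum-level maps \[ C\range nn \longleftarrow C\range 11 \cotensor_C C\range{n-1}{n-1} \longrightarrow C\range 11 \sm C\range{n-1}{n-1}, \] where the right-hand map is the truncation to $\Tot^0$ of the cobar construction and the left-hand map comes from $C\range 11 \to C\range 1\infty$ together with the identification $C\range nn \simeq C\range 1\infty \cotensor_C C\range{n-1}{n-1}$; it then applies $E^\vee$ and inducts on $n$. Every comparison is a genuine spectrum map between rank-one pro-free $E^\vee$--modules, so equivariance is automatic, and being an isomorphism is checked modulo the maximal ideal using the $K$--homology cotensor computations already established --- no identification of infinitely generated submodules of $E^0 C$ is ever needed. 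To repair your write-up, replace the ``recognized as $\m^n$'' step by this zig-zag (or an equivalent device that only ever compares lines along spectrum maps); your remaining steps, the equivariance of multiplication and the appeal to \Cref{EthyDeterminesPic}, then go through as you describe.
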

\begin{proof}
We take as inspiration the sequence of maps of $k$--modules
\begin{center}
\begin{tikzcd}
\frac{\<x\>}{\<x^2\>} \otimes_k \frac{\<x^{n-1}\>}{\<x^n\>} \arrow{r} & \frac{\<x\>}{\<x^2\>} \otimes_{k\ps{x}} \frac{\<x^{n-1}\>}{\<x^n\>} \arrow{r} & \frac{\<x\> \otimes_{k\ps{x}} \<x^{n-1}\>}{\<x^2\> \otimes_{k\ps{x}} \<x^{n-1}\> + \<x\> \otimes_{k\ps{x}} \<x^n\>} \arrow[equal]{r} & \frac{\<x^n\>}{\<x^{n+1}\>}.
\end{tikzcd}
\end{center}
The coalgebraic spectral analogue of the left-most map is the natural truncation
\begin{align*}
C\range 1 1 \cotensor_C C\range{n-1}{n-1} & = \Tot \Omega(C\range 1 1; C; C\range{n-1}{n-1}) \\
& \to \Tot^0 \Omega(C\range 1 1; C; C\range{n-1}{n-1}) \\
& = C\range 1 1 \sm C\range{n-1}{n-1}.
\end{align*}
The sum in the denominator complicates the other map, which can no longer be an isomorphism:
\begin{align*}
C\range n n & = \fib\left( C\range n \infty \to C\range {n+1} \infty \right)
= \fib\left( C \range1 \infty \cotensor_C C\range{n-1} \infty \to C\range 1 \infty \cotensor_C C\range n \infty \right)
= C\range 1 \infty \cotensor_C C\range{n-1}{n-1} \\
& \leftarrow C\range 1 1 \cotensor_C C\range{n-1}{n-1}.
\end{align*}
Nonetheless, upon applying continuous $E$--homology the zig-zag \[E^\vee C\range n n \leftarrow E^\vee (C\range 1 1 \cotensor_C C\range{n-1}{n-1}) \to E^\vee (C\range 1 1 \sm C \range{n-1}{n-1})\] determines the $\Aut \Gamma$--representation structure of $E^\vee C\range n n$ to be that of $E^\vee C\range 1 1 \otimes_{E_*} E^\vee C \range{n-1}{n-1}$.
\end{proof}

\begin{corollary}
For $C = H_d$ and $p \gg d$, there is an equivalence $C\range n n \simeq (\S^{\det})^{\sm n}$. \qed
\end{corollary}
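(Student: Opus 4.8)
The plan is to read this off \Cref{IdentificationOfPowers} after checking its hypothesis for $C = H_d = L_\Gamma \Susp^\infty_+ B^d S^1$ and substituting the known identification of the bottom annular stratum. First I would verify that $C$ is a pointed coalgebraic formal curve spectrum: the basepoint of $B^d S^1$ supplies the coaugmentation $\eta\co \S \to C$, and \Cref{RavenelWilsonForKthy} shows both that $K_* B^d S^1$ is even--concentrated and that the $p$--divisible group $\{B^d S^1[p^j]_K\}_j$ has dimension $\binom{d-1}{d-1} = 1$, so that $K_0 C$ is a one--dimensional formal variety in coalgebras (indeed $\Sch K_0 C = (B^d S^1)_K$). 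Hence the standing hypothesis of \Cref{IdentificationOfPowers} holds for $C$, and, since $p \gg d$, that theorem yields a $\Gamma$--local equivalence $C\range n n \simeq (C\range 1 1)^{\sm n}$.

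It then remains only to identify the bottom stratum $C\range 1 1 = T_+ H_d$ with $\S^{\det}$ in the $\Gamma$--local category. This is \Cref{TangentSpectrumOfKZd1IsSdet}, which computes $E^\vee T_+ H_d$ as the determinant representation, combined with the injectivity of $E^\vee$ on $\Pic(\CatOf{Spectra}_\Gamma)$ supplied by \Cref{EthyDeterminesPic} in the range $p \gg d$: together these produce a $\Gamma$--local equivalence $C\range 1 1 \simeq \S^{\det}$. Because the $\Gamma$--local smash product of $\Gamma$--local spectra preserves $\Gamma$--local equivalences, smashing $n$ copies of this equivalence gives $(C\range 1 1)^{\sm n} \simeq (\S^{\det})^{\sm n}$, and composing with the equivalence of the previous step proves the claim.

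Since the substantive work is already done in \Cref{IdentificationOfPowers} and \Cref{TangentSpectrumOfKZd1IsSdet}, there is no real obstacle here beyond bookkeeping. If one prefers to unwind \Cref{IdentificationOfPowers} instead of citing it as a black box, the underlying mechanism is an induction identifying $E^\vee C\range n n$ with $(E^\vee C\range 1 1)^{\otimes n}$ $\Aut \Gamma$--equivariantly: the comparison zig--zag is automatically $\Aut \Gamma$--equivariant, being assembled from maps of spaces and cobimodule maps; the wrong--way map $C\range 1 1 \cotensor_C C\range{n-1}{n-1} \to C\range n n$ becomes an $E^\vee$--isomorphism exactly because the curve is one--dimensional (vanishing of higher $\Cotor$, cf.\ \Cref{CotensorInterchangeLaw}); the K\"unneth map is an isomorphism on the line $E^\vee \S^{\det}$; and a final appeal to \Cref{EthyDeterminesPic} lifts the resulting $\Aut \Gamma$--equivariant isomorphism to an equivalence of the Picard elements $C\range n n$ and $(\S^{\det})^{\sm n}$.
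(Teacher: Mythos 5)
Your argument is exactly the paper's: the corollary is stated with no proof precisely because it follows immediately from \Cref{IdentificationOfPowers} (applicable since $(B^d S^1)_K$ is a formal curve by \Cref{RavenelWilsonForKthy}) together with the preceding identification $T_+ H_d \simeq \S^{\det}$ via \Cref{TangentSpectrumOfKZd1IsSdet} and \Cref{EthyDeterminesPic}. Your verification of the formal-curve hypothesis and the final smash-power bookkeeping are correct and match the intended reading.
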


\begin{remark}
It follows from \Cref{TangentSpectrumOfKZd1IsSdet} and \Cref{IdentificationOfPowers} that there is \emph{no} global finite complex with a map to $B^d S^1$ which in $E$--cohomology projects to precisely the $1$--jets.  Namely, $(E_\Gamma)_* B^d S^1$ admits a filtration whose associated-graded is a product of powers of the determinant representation, and any nontrivial map in from a power of the standard representation is obstructed by Schur's lemma.

The reader should compare with the situation with $\CP^\infty$ and ordinary homology, where $\CP^1 \simeq \S^2$ performs this selection of the $1$--jets, but selecting the second annulus is obstructed by $\eta \in \pi_1 \S$, available only after coning off $\CP^1$ or \emph{localizing away from $2$}.  By localizing at $\Gamma$, we have enlarged the variety of spheres available and thus given ourselves more tools by which we can carefully select certain individual homology classes in $\Gamma$--local spectra.

The rest of the annular tower gives a remarkable filtration of the $\Gamma$--local homotopy type of $B^d S^1$, as though it were a cell complex built out of Picard--graded cells with a simple inductive structure.  The global homotopy type $B^d S^1$ of course also comes with a cellular decomposition by global cells---after all, it is presented simplicially by an iterated bar construction---but this information is dramatically more complex.  Morally, passing to the $\Gamma$--local category has simultaneously enlarged our notion of ``cell'' and simplified the homotopy type of a complicated space, at once resulting in a simple pattern not globally visible.
\end{remark}

\subsection{Comparison of $K^{\det}$ with Westerland's $R_d$}\label{ComparisonWithWesterland}

Prompted by the entirety of \Cref{SectionAnalyzingTetaForSdet}, we moved to consider the following umbrella question:

\begin{metaquestion*}
Given a spectrum $X$ with a relationship to $\CP^\infty_+ =: H_1$, a \textit{determinantal analogue} of $X$ is a $\Gamma$--local spectrum with an analogous relationship to \[\XP^\infty_+ := H_d := L_\Gamma \Susp^\infty_+ B^d S^1.\]  Which classically interesting spectra appearing in chromatic homotopy theory have determinantal analogues?  What are they good for?
\end{metaquestion*}

So far, we have found the following determinantal analogues:
\begin{itemize}
    \item The subspectra $\CP^n_+ \subseteq \CP^\infty_+$ have analogues $\XP^n_+ \subseteq \XP^\infty_+$.
    \item The strata $\CP^n_+ / \CP^{n-1}_+ \simeq \S^{2n}$ have analogues $\XP^n_+ / \XP^{n-1}_+ \simeq (\S^{\det})^{\sm n}$.
    \item The $A_\infty$ ring $S^1_+$ has an analogue $(\Susp^{-1} \S^{\det})_+$.
\end{itemize}

\noindent Finding determinantal analogues is no easy feat, and so for this section we limit ourselves to searching for direct consequences of those above.  In the classical case, the indicated composite
\begin{center}
\begin{tikzcd}
\S^0 \arrow{r} \arrow[equal]{d} & \CP^1_+ \arrow{r} \arrow{d} & \CP^2_+ \arrow{r} & \cdots \arrow{r} & \CP^\infty_+ \arrow[leftarrow, thick]{llld}[description]{\beta} \\
\S^0 & \S^2 \arrow[bend left]{u} & \S^4 \arrow[crossing over]{u} & \cdots
\end{tikzcd}
\end{center}
is known as the Bott class, and a theorem of Snaith ties this class to complex $K$-theory:

\begin{theorem}[{\cite{Snaith}}]
There is an equivalence \[\Susp^\infty_+ \CP^\infty [\beta^{-1}] \xrightarrow\simeq KU. \pushQED\qed\qedhere\popQED\]
\end{theorem}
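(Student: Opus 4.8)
The plan is to build the comparison map by hand, check that it inverts $\beta$, and then verify it is an equivalence by a homotopy-group computation. The equivalence $\CP^\infty \simeq BU(1)$ exhibits $\CP^\infty$ as a grouplike $E_\infty$-space under tensor product of line bundles, so $\Susp^\infty_+\CP^\infty$ is an $E_\infty$-ring spectrum; the Bott class $\beta$ is the image of the bottom-cell map $\S^2 = \Susp^\infty(\CP^1/\CP^0) \to \Susp^\infty\CP^\infty \to \Susp^\infty_+\CP^\infty$, and the filtered colimit $\Susp^\infty_+\CP^\infty[\beta^{-1}]$ along multiplication by $\beta$ is again $E_\infty$. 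The reduced tautological line bundle is a map of $E_\infty$-spaces $BU(1) \to \Z \times BU = \Loops^\infty KU$ (sending $\otimes$ of line bundles to the multiplicative $E_\infty$-structure on $\Loops^\infty KU$), hence adjoint to an $E_\infty$-ring map $\Susp^\infty_+\CP^\infty \to KU$ carrying $\beta$ to the Bott element of $\pi_2 KU$; since that element is a unit, the map factors through an $E_\infty$-ring map $\theta\co \Susp^\infty_+\CP^\infty[\beta^{-1}] \to KU$.

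To see $\theta$ is an equivalence it suffices to see it is an isomorphism on homotopy groups. A useful preliminary (and sanity check) is the homological shadow: the Pontryagin ring $H_*(\Susp^\infty_+\CP^\infty;\Z)$ is the divided-power algebra $\Gamma_\Z[\beta]$ on the degree-$2$ class $\beta$ (dual to $H^*(\CP^\infty;\Z) = \Z[x]$ with coproduct $x \mapsto x_1 + x_2$), so inverting $\beta$ drags in every denominator $1/n!$ and yields $H_*(\Susp^\infty_+\CP^\infty[\beta^{-1}];\Z) \cong \Q[\beta^{\pm 1}]$; this matches $H_*(KU;\Z)$, which is rational because the Bousfield class of $KU_{(p)}$ is $\langle H\Q\rangle \vee \langle K(1)\rangle$ and $H\F_p$ is acyclic for both summands, with $\theta$ fixing $\beta$. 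The genuine content is then the homotopy computation: filter $\Susp^\infty_+\CP^\infty$ by the skeleta $\CP^n$, whose subquotients are the spheres $\S^{2n}$, and run the resulting Atiyah--Hirzebruch spectral sequence together with the module structure over $\beta$ (detected by $b_1 \in H_2(\CP^\infty;\Z)$); after inverting $\beta$ this collapses to $\pi_*\Susp^\infty_+\CP^\infty[\beta^{-1}] \cong \Z[\beta^{\pm 1}]$ concentrated in even degrees, whereupon $\theta$ is visibly the $\pi_*$-isomorphism $\Z[\beta^{\pm 1}] \to KU_*$. This is, in the language of the preceding sections, the height-one, large-prime instance of the annular/Koszul-dual picture, with $\CP^\infty_+ = H_1$, $T_\eta(\Susp^\infty_+\CP^\infty) \simeq \S^2$, and the bottom Picard generator $\beta \in \pi_2$ being inverted.

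The step I expect to be the main obstacle is exactly this last homotopy computation. On ordinary homology the effect of inverting $\beta$ is completely transparent --- it merely rationalizes --- but the honest homotopy type is $\Z[\beta^{\pm 1}]$ rather than $\Q[\beta^{\pm 1}]$, because the powers $\beta^n$ become increasingly divisible in $\pi_*\Susp^\infty_+\CP^\infty$ (already $\beta^2$ is twice a generator of $\tilde\pi_4^s\CP^\infty$, the $4$-cell of $\CP^2$ being attached by $\eta$). Tracking this divisibility --- equivalently, solving the multiplicative extension problems in the Atiyah--Hirzebruch spectral sequence while keeping careful account of how multiplication by $\beta$ interacts with the filtration --- is precisely where Snaith's computation does its work, and it is the portion of a full argument I would expect to be both the longest and the most delicate.
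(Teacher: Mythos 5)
A preliminary remark on the comparison: the paper does not prove this statement at all---it is imported from the literature with a citation to Snaith (hence the qed box) and used only as the template for the definition of $K^{\det}$---so your attempt has to be measured against the known proofs rather than against an argument in the text. The first half of your proposal is the standard construction and is fine: $\Susp^\infty_+ \CP^\infty$ is a ring spectrum via $\CP^\infty \simeq BU(1)$, the tautological line bundle gives a ring map $\theta\co \Susp^\infty_+\CP^\infty \to KU$ carrying $\beta$ (the bottom-cell class, exactly as the paper defines it) to the Bott element, and invertibility of the Bott element lets $\theta$ factor through the telescope $\Susp^\infty_+\CP^\infty[\beta^{-1}]$. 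Your homological observation is also correct: $H_*(\Susp^\infty_+\CP^\infty;\Z)$ is the divided power algebra on $b_1$, so inverting $\beta$ rationalizes homology, matching $H_*(KU;\Z)\cong\Q[\beta^{\pm 1}]$.

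The genuine gap is the step you yourself flag and then defer: the claim that the skeletal (Atiyah--Hirzebruch) spectral sequence, ``after inverting $\beta$, collapses to $\pi_*\cong\Z[\beta^{\pm1}]$,'' is not an argument, and the proposed mechanism cannot deliver the conclusion even in principle. Multiplication by $\beta$ is detected by $b_1$ in filtration $2$, and the localized $E_2$-page is $\left(\Gamma_{\Z}[b_1]\otimes\pi_*\S\right)[b_1^{-1}]\cong\Q[b_1^{\pm1}]$: rational in every bidegree, because $b_1^n=n!\,b_n$ kills all torsion and drags in all denominators. The abutment, however, is supposed to be $\Z[\beta^{\pm1}]$, so the integral answer is invisible on every page; it is smeared across infinitely many filtration-shifting extensions (precisely the $n!$-divisibility of $\beta^n$ you mention), classes of $\pi_0$ acquire unbounded filtration, and strong convergence of the localized spectral sequence fails. ``Solving the multiplicative extension problems'' is therefore not a finite bookkeeping task within this spectral sequence---it is the entire content of the theorem, and the actual proofs obtain it by other means (Snaith's original stable-splitting computations, or modern arguments that compute $KU_*$ of the telescope and combine a $p$-complete/$K(1)$-local descent argument with a rational fracture square). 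As written, your proposal establishes the formal half (existence of $\theta$, its effect on $\beta$ and on homology) and openly acknowledges, but does not supply, the essential computation; to count as a proof it would need to be completed by one of these genuinely different devices rather than by further analysis of the localized Atiyah--Hirzebruch spectral sequence alone.
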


\noindent Using the determinantal analogue of the cellular decomposition as in
\begin{center}
\begin{tikzcd}
\S^0 \arrow{r} \arrow[equal]{d} & \XP^1_+ \arrow{r} \arrow{d} & \XP^2_+ \arrow{r} & \cdots \arrow{r} & \XP^\infty_+, \arrow[leftarrow, thick]{llld}[description]{\beta^{\det}} \\
\S^0 & \S^{\det} \arrow[bend left]{u} & (\S^{\det})^{\sm 2} \arrow[crossing over]{u} & \cdots
\end{tikzcd}
\end{center}
we can construct a determinantal analogue of complex $K$--theory by taking Snaith's theorem as a definition.

\begin{definition}
We set \textit{determinantal $K$--theory} to be \[K^{\det} := \XP^\infty_+ [(\beta^{\det})^{-1}].\]
\end{definition}

\begin{remark}
When $d = 1$, the spectra $H_d$ and $H_1$ agree, as do the homotopy classes $\beta^{\det}$ and $\beta$, hence $K^{\det}$ agrees with $p$--adic $K$--theory $KU^\wedge_p$.
\end{remark}

We're left with the harder part of the meta-question: what use is $K^{\det}$?  Toward this end, Craig Westerland has recently considered a closely related spectrum:
\begin{definition}[{\cite[Definition 3.11]{Westerland}}]
Consider the action of $\Z/p^\times$ on $T_d$ by field multiplication. Averaging this action gives rise to an idempotent in $K$--homology which splits the suspension spectrum as \[T_d \simeq \bigvee_{k=0}^{p-1} Z^{\sm k},\] where $Z$ is a (non-uniquely specified) spectrum with the property $Z^{\sm p} \simeq Z$.  The spectrum $R_d$ is defined by inverting the element of Picard--graded homotopy determined by the composite \[\alpha\co Z \to \bigvee_{k=0}^{p-1} Z^{\otimes k} \simeq T_d \to H_d \xrightarrow{\text{Bockstein}} L_\Gamma \Susp^\infty_+ B^{d+1} \Z_p.\]  Upon picking a coordinate on $\CP^\infty_K$ and applying $K$--homology to this composite, one sees that it selects the dual of an induced coordinate on $(B^{d+1} \Z_p)_K$.
\end{definition}

\begin{theorem}
There is an equivalence $K^{\det} \simeq R_d$.
\end{theorem}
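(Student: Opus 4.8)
The plan is to realize both $K^{\det}$ and $R_d$ as Snaith localizations of one and the same $E_\infty$--ring and then to see that the two inverted classes coincide once we pass to $K$--homology. First I would dispose of the cosmetic mismatch between the targets $\XP^\infty_+ = H_d = L_\Gamma\Susp^\infty_+ B^d S^1$ and $L_\Gamma\Susp^\infty_+ B^{d+1}\Z_p$: the map of spaces $B^d S^1 = K(\Z,d+1) \to K(\Z_p,d+1) = B^{d+1}\Z_p$ has homotopy fibre $K(\Z_p/\Z, d)$, and since $\Z_p/\Z$ is an extension of a $\Q$--vector space by $\bigoplus_{\ell \ne p} \Q_\ell/\Z_\ell$, this fibre is built $\Gamma$--locally out of rationally trivial and prime--to--$p$ Eilenberg--Mac Lane pieces, hence is $\Gamma$--acyclic (cf.\ \Cref{RavenelWilsonForKthy}). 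Therefore $L_\Gamma\Susp^\infty_+ B^{d+1}\Z_p \simeq H_d$ as $E_\infty$--rings --- note that $B^d S^1$ is an infinite loop space, so $H_d$ is genuinely $E_\infty$ --- and Westerland's $R_d$ is thereby presented as $H_d[\alpha^{-1}]$ for a class $\alpha$ out of a $\Gamma$--locally invertible spectrum $Z$, just as $K^{\det} = H_d[(\beta^{\det})^{-1}]$.

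Next I would pin down the Picard grading. By Westerland's computation, $K_*\alpha$ is the functional on $K_* H_d = (K^* H_d)^\vee$ dual to a coordinate on the formal curve $(B^{d+1}\Z_p)_K \cong (B^d S^1)_K$, i.e.\ a nonzero scalar multiple of the tangent vector at the origin (the unique-up-to-scalar primitive of $K_* H_d$). On the other side, $\beta^{\det}$ factors through the $1$--skeleton $\XP^1_+ \simeq \S \vee \S^{\det}$ as the inclusion of the summand $\S^{\det} = C\range11$, so $K_*\beta^{\det}$ is likewise a nonzero multiple of that same tangent vector, and $K_*\alpha = c\cdot K_*\beta^{\det}$ for a unit $c \in k$. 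Lifting this identification through the Bockstein spectral sequence of \Cref{AcyclicsAndBocksteins} (every spectrum here has even $K$--homology), $E^\vee\alpha$ and $E^\vee\beta^{\det}$ both carry their source lines isomorphically onto the \emph{same} rank--one $E_*$--submodule of $E^\vee H_d$, which by \Cref{TangentSpectrumOfCoalgebraicCurve} and \Cref{TangentSpectrumOfKZd1IsSdet} supports the determinant representation of $\Aut\Gamma$. Hence $E^\vee Z \cong E^\vee \S^{\det}$ as $\Aut\Gamma$--modules, and since $p \gg \height\Gamma$, the last clause of \Cref{EthyDeterminesPic} promotes this to a $\Gamma$--local equivalence $Z \simeq \S^{\det}$. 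From here $\alpha$ and $\beta^{\det}$ both lie in $\pi_{\S^{\det}} H_d$ and differ by the scalar $c$ modulo the higher annular filtration. I expect this paragraph to be the main obstacle: everything after it is formal, but correctly matching the $\Aut\Gamma$--action requires genuinely combining Westerland's $K$--homological description of $\alpha$ with \Cref{TangentSpectrumOfKZd1IsSdet} and the hypothesis $p \gg \height\Gamma$ (and, if one reads Westerland's ``selects the dual of a coordinate'' only modulo higher filtration, one must additionally check that inverting such a class still inverts the tangent vector).

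It then remains to conclude $H_d[\alpha^{-1}] \simeq H_d[(\beta^{\det})^{-1}]$ over $H_d$. For this I would record the elementary observation that, for a $\Gamma$--local $E_\infty$--ring $R$ and a map $f\co L \to R$ out of an invertible spectrum, $f$ is a unit in $R$ if and only if $K_* f$ is a unit in $K_* R$: indeed $f$ is a unit exactly when $\cofib(L \sm R \xrightarrow{f\cdot} R)$ vanishes, this cofibre is $\Gamma$--local, and a $\Gamma$--local $K$--acyclic spectrum is contractible (Hovey--Strickland; cf.\ \Cref{ComputationalToolsSection}). Since $K_* R_d = (K_* H_d)[(K_*\alpha)^{-1}]$ and $K_*\beta^{\det} = c^{-1}K_*\alpha$, the class $\beta^{\det}$ is a unit in $R_d$; symmetrically $\alpha$ is a unit in $K^{\det}$. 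The universal property of the Snaith construction --- it is the initial $E_\infty$--algebra under $H_d$ in which the chosen class becomes invertible --- now supplies $H_d$--algebra maps $K^{\det} \to R_d$ and $R_d \to K^{\det}$, whose composites must be the identity by uniqueness, so they are mutually inverse equivalences. As a sanity check, when $d = 1$ the entire argument collapses to Snaith's original theorem with $p$--complete coefficients, where $H_1 = \Susp^\infty_+ \CP^\infty$, $\alpha = \beta^{\det} = \beta$, and $K^{\det} = KU^\wedge_p = R_1$.
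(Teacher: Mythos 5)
Your argument is correct and reaches the equivalence by a genuinely different route than the paper's. The paper never compares $\alpha$ and $\beta^{\det}$ intrinsically; following Westerland, it maps both localizations into the double localization $\Susp^\infty_+ B^{d+1}\Z_p[\rho^{-1},(\beta^{\det})^{-1}]$ and does all bookkeeping inside Westerland's explicit ring $K_* B^{d+1}\Z_p \cong C(\Z_p, K_*)$: the class $K_*\beta^{\det}$ is identified with the function $f_0(w) = w \bmod p$, and Westerland's computation that inverting $\alpha$ (equivalently his $\rho$) inverts exactly $f_0$ shows both arrows into the double localization are $K_*$--isomorphisms, hence $\Gamma$--local equivalences. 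You instead compare the two classes inside the coalgebra $K_0 H_d$ dual to the formal curve $K^0 H_d \cong k\ps{x}$: both are nonzero primitives, hence proportional up to a unit, and your detection-of-units lemma plus the universal property of the Snaith construction (or, just as well, the double-localization trick, for which you already have every ingredient) finishes the proof. Your explicit reduction $L_\Gamma\Susp^\infty_+ B^{d+1}\Z_p \simeq H_d$ is a point the paper leaves implicit and is a welcome addition. The trade-off: the paper's route inherits from Westerland the \emph{exact} identification of which element of $C(\Z_p,K_*)$ gets inverted, while your route avoids the auxiliary class $\rho$ and the function-ring computation but makes that exactness your burden --- and it genuinely matters, since inverting $f_0$ and inverting $f_0 + \mu$ (or $f_0$ plus higher-filtration terms) produce different rings.

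Two remarks in that direction. First, your $E^\vee$/Picard paragraph both overreaches and is dispensable: two rank-one free summands of $E^\vee H_d$ with the same reduction modulo $\m$ need not coincide as submodules, so ``onto the same rank-one $E_*$--submodule'' is not justified as stated; but your endgame uses only $K$--homology, so you lose nothing by dropping the identification $Z \simeq \S^{\det}$ entirely. Second, the assertion that $K_*\beta^{\det}$ is \emph{exactly} primitive (no component on the unit class) requires the chosen section $s\co \S^{\det} \to \XP^1_+$ of the pinch map to compose trivially with the augmentation $\eps\co \XP^1_+ \to \S$; this is not automatic for an arbitrary splitting, but it is a formal fix --- replace $s$ by $s - \iota \circ \eps \circ s$, with $\iota\co \S \to \XP^1_+$ the unit, which is still a section and is now reduced. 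The paper's phrase ``handcrafted so that'' elides the same choice, so you are in good company, but since your whole comparison hinges on primitivity (and, for $\alpha$, on reading Westerland's ``selects the dual of a coordinate'' as an exact rather than associated-graded statement, which his Propositions 3.12 and 3.17 do supply), these points deserve a sentence each.
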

\begin{proof}
We mimic the style of one of Westerland's proofs~\cite[Corollary 3.18]{Westerland}, where he shows that inverting the class $\alpha$ above is equivalent to inverting another class $\rho$, which participates in a $\Gamma$--local diagram
\begin{center}
\begin{tikzcd}
& \Susp^\infty_+ B^{d+1} \Z_p \arrow["\psi^g - g"]{r} \arrow{d} & \Susp^\infty_+ B^{d+1} \Z_p \arrow{d} \\
\S^{\det} \arrow["\rho"]{ru} \arrow["\delta"]{r} & \Susp^\infty_+ B^{d+1} \Z_p[\alpha^{-1}] \arrow["\psi^g - g"]{r} & \Susp^\infty_+ B^{d+1} \Z_p[\alpha^{-1}].
\end{tikzcd}
\end{center}
Here $g$ is a generator of $\Z_p^\times$m $\psi^g$ encodes the natural action of $\Z_p^\times$ on $L_\Gamma \Susp^\infty_+ B^{d+1} \Z_p$ (via the multiplicative action of $\Z_p^\times$ on $\Z_p$), and $\delta$ is defined as the indicated pushforward.  He constructs isomorphisms~\cite[Propositions 3.10 and 3.12]{Westerland}
\begin{center}
\begin{tikzcd}
K_* B^{d+1} \Z_p \arrow["\simeq"]{r} \arrow{d} & C(\Z_p, K_*) \arrow["\operatorname{restrict}"]{d} \\
K_*(\Susp^\infty_+ B^{d+1} \Z_p[\alpha^{-1}]) \arrow["\simeq"]{r} & C(\Z_p^\times, K_*),
\end{tikzcd}
\end{center}
which he uses to calculate that the two maps \[\Susp^\infty_+ B^{d+1} \Z_p[\alpha^{-1}] \xrightarrow{\simeq} \Susp^\infty_+ B^{d+1} \Z_p[\alpha^{-1}][\delta^{-1}] \xleftarrow{\simeq} \Susp^\infty_+ B^{d+1} \Z_p[\rho^{-1}]\] are both equivalences, essentially by calculating the images of $\delta$ and $\rho$ in the ring $C(\Z_p^\times, K_*)$.

Similarly, we will investigate the two maps \[\Susp^\infty_+ B^{d+1} \Z_p[(\beta^{\det})^{-1}] \to \Susp^\infty_+ B^{d+1} \Z_p[\rho^{-1}, (\beta^{\det})^{-1}] \from \Susp^\infty_+ B^{d+1} \Z_p[\rho^{-1}].\]  Using the identification above of $K_* B^{d+1} \Z_p$ as a ring of functions, $\beta^{\det}$ has been handcrafted so that a generator of $K_* \S^{\det}$ is sent by $\beta^{\det}$ according to
\begin{center}
\begin{tikzcd}[row sep=0.2em]
K_* B^{d+1} \Z_p \arrow["\simeq"]{r} & C(\Z_p, K_*) \\
K_* \beta^{\det} \arrow[|->]{r} & (w \mapsto w \pmod{p}).
\end{tikzcd}
\end{center}
Westerland denotes this element of $C(\Z_p, K_*)$ as $f_0$, and he shows that inverting either of $\alpha$ or $\rho$ has the effect of inverting exactly this element~\cite[Proof of Proposition 3.17]{Westerland}.  It follows that both of our two maps are equivalences.
\end{proof}

\begin{remark}[{\cite[Section 3.9]{Westerland}}]
Westerland exposes a variety of remarkable features of $K^{\det}$, the grandest of which is the $E_\infty$--equivalence \[K^{\det} \simeq E_\Gamma^{hS\mathbb G_\Gamma^{\pm}}.\]  Here $\Gamma$ is specifically taken over $k = \F_{p^d}$, the group $\mathbb G_\Gamma$ is the extension of $\Aut \Gamma$ by $\operatorname{Gal}(\F_{p^d} / \F_p)$, and $S\mathbb G_\Gamma^\pm$ is the subgroup of ``special'' elements~\cite[Section 2.2]{Westerland}---i.e., it lies in the fiber sequence \[1 \to S\mathbb G_\Gamma^\pm \to \mathbb G_\Gamma \xrightarrow{\det^\pm} \Z_p^\times \to 1,\] where $\det^\pm$ acts by the determinant on $\Aut \Gamma$ and by $\operatorname{Frob} \mapsto (-1)^{d-1}$ on the Galois component.  It follows that there is a short resolution: \[L_\Gamma \S \to K^{\det} \xrightarrow{\psi - 1} K^{\det},\] where $\psi$ is a certain Adams--type operation inherited from the action of $\Z_p^\times$ on $B^{d+1} \Z_p$ (or, equivalently, the lingering action of $\Z_p^\times$ on the fixed point spectrum)~\cite[Proposition 3.14]{Westerland},~\cite[Theorem 2]{DevinatzHopkinsHFPSS}.  This spectrum has been investigated before, under the names ``Iwasawa extension of the $K(d)$--local sphere'' and ``Mahowald's half-sphere''.
\end{remark}

\begin{remark}[{\cite[Corollary 4.21]{Westerland}}]
Westerland also describes a cellular filtration of $\Susp^\infty_+ B^{d+1} \Z_p$ which bears resemblance to the annular filtration described here, but which is accessed by wholly different means.  He constructs an analogue for $K^{\det}$ of the classical complex $J$--homomorphism, and he then checks that the Thom spectrum of the canonical bundle on $B^{d+1} \Z_p$ has the homotopy type of $(\S^{\det})^{-1} \sm \Susp^\infty B^{d+1} \Z_p$.  This mirrors the following classical fact about projective spaces~\cite[Proposition 4.3]{Atiyah}: \[\operatorname{Thom}(\sheaf L - 1 \downarrow \CP^\infty) \simeq \Susp^{-2} \Susp^\infty \CP^\infty,\] and more generally \[\operatorname{Thom}(m(\sheaf L - 1) \downarrow \CP^{n-m}) \simeq (\CP^1)^{\sm(-m)} \sm \CP\range m n.\]  It would be interesting to know if these filtrations coincide.
\end{remark}

\section{Some open questions}

Before closing, we record some research avenues left open by the present work.

\subsection{Gross--Hopkins dualizing object}

As remarked on in the introduction, the determinantal sphere has previously arisen in connection with the Gross--Hopkins dualizing object $\widehat{\mathbb I}_{\Q/\Z}$ in the $\Gamma$--local category.  Our identification of \(\XP^1\) with $\widehat{\mathbb I}_{\Q/\Z}$ rests on working at a prime $p$ satisfying $p \gg d$, so that we can avail ourselves of \Cref{EthyDeterminesPic}.  At small primes, the picture is murkier: we conjecture that $\XP^1$ disagrees with $\widehat{\mathbb I}_{\Q/\Z}$, just as the Goerss--Henn--Mahowald--Rezk $\S[\det]$ does, but we do not have a proof that this is so---and we furthermore have neither a proof that $\XP^1$ agrees with $\S[\det]$ in the small prime regime nor one that it disagrees.\footnote{The reader may also find the recent paper of Barthel, Beaudry, Goerss, and Stojanoska of interest, where they give yet another construction of an object they call $\S\<\det\>$~\cite{BBGS}.}  Understanding the relationship between these spectra has the opportunity either to shed light on Gross--Hopkins duality or to shed light on exotic Picard groups at greater heights, both of which are interesting prospects.

\subsection{Chromatic splitting}

The chromatic splitting conjecture describes the homotopy groups of such objects as \(\Q \otimes L_{K(d)} \S\), and the Gross--Hopkins dualizing object induces a kind of Poincar\'e duality among these groups.  Westerland has noticed that there is a factorization $\iota'$ of the fundamental class as in
\begin{center}
\begin{tikzcd}
\S^{d+1} \arrow["\iota_{d+1}"]{r} \arrow[densely dotted, "\iota'"']{rd} & L_{K(d)} \Susp^\infty_+ B^{d+1} \Z_p \\
& \XP^1 \arrow["\beta^{\det}"']{u},
\end{tikzcd}
\end{center}
which at large primes can be interpreted as follows:
\begin{align*}
\iota' \in [\S^{d+1}, \XP^1] & = [\S^{d+1}, \Susp^{d-d^2} \widehat{\mathbb I}_{\Q/\Z}] \\
& = [\S^{1+d^2}, \widehat{\mathbb I}_{\Q/\Z}] \\
& = \operatorname{Hom}(\pi_{-1-d^2} M_d \S^0, \Q_p / \Z_p).
\end{align*}

\begin{conjecture}
The class $\iota'$ is nonzero.  It is dual to the top-dimensional class (i.e., the Poincar\'e dualizing class) predicted by the rational chromatic splitting conjecture.
\end{conjecture}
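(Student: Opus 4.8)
The plan is to pin $\iota'$ down precisely enough to compare it with the Gross--Hopkins dual of the ``top'' class of the rational chromatic splitting conjecture. At $p \gg d$ the identifications $\XP^1 \simeq \S^{\det} \simeq \Susp^{d-d^2}\widehat{\mathbb I}_{\Q/\Z}$ rewrite $[\S^{d+1},\XP^1]$ as $\operatorname{Hom}(\pi_{-1-d^2}M_d\S^0,\Q_p/\Z_p)$; since $M_d\S^0$ is rationally trivial, the group $\pi_{-1-d^2}M_d\S^0$ is torsion, and the rational splitting conjecture predicts that it carries a distinguished divisible summand $\cong\Q_p/\Z_p$ --- the Poincar\'e-dualizing class --- whose Pontryagin dual is a distinguished $\Z_p\subseteq[\S^{d+1},\XP^1]$. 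So I would split the problem into two tasks: first, compute $\iota'$ from Westerland's construction; second, locate this $\Z_p$ together with its generator. The conjecture then asserts that the two coincide up to $\Z_p^\times$.

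For the first task, note that $\iota' = 0$ would force $\iota_{d+1} = \beta^{\det}\circ\iota' = 0$, so it suffices to show that the image $\bar\iota\in\pi_{d+1}K^{\det}$ of $\iota_{d+1}$ under the localization $q\co L_{K(d)}\Susp^\infty_+ B^{d+1}\Z_p\to R_d\simeq K^{\det}$ is nonzero --- and, for the precise-value statement, to identify it. Here $K^{\det}=\XP^\infty_+[(\beta^{\det})^{-1}]$ is a localization of the Snaith construction, for which Westerland computes $K_* K^{\det}\cong C(\Z_p^\times, K_*)$, and $\bar\iota$ is the $K(d)$-localized bottom cell $S^{d+1}\hookrightarrow B^{d+1}\Z_p$ pushed forward to $K^{\det}$. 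I would trace this class through Westerland's dictionary ($K_* B^{d+1}\Z_p\cong C(\Z_p, K_*)$, restriction along $\Z_p^\times\hookrightarrow\Z_p$) and through his Thom-spectrum / $J$-homomorphism description $\operatorname{Thom}(\sheaf L-1\downarrow B^{d+1}\Z_p)\simeq(\S^{\det})^{-1}\sm\Susp^\infty B^{d+1}\Z_p$, aiming to identify $\bar\iota$ with a generator of the Picard-graded $\beta^{\det}$-periodicity class. For the second task I would compute $\pi_* K^{\det}$ and the Gross--Hopkins-dualizing $\Z_p$ from the short resolution $L_\Gamma\S\to K^{\det}\xrightarrow{\psi-1}K^{\det}$ --- equivalently from $K^{\det}\simeq E_\Gamma^{hS\mathbb G_\Gamma^{\pm}}$ together with $L_\Gamma\S = (K^{\det})^{h\Z_p^\times}$ --- and then match the two computations by comparing the relevant homotopy-fixed-point spectral sequences.

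The real obstacle is that $\iota'$ --- and likewise $\iota_{d+1}$ and $\bar\iota$ --- is invisible to Morava $K(d)$--homology: its source and target are inequivalent Picard elements, so \emph{every} map between them is $K(d)_*$--zero, and $\iota'$ lives entirely in the ``exotic'' torsion part of homotopy that none of the homology theories in our toolkit detects. Consequently the dictionary above will recover $\iota'$ only up to this exotic part, and the crux becomes the strictly harder problem of showing that the bottom-cell class survives the localization defining $K^{\det}$ as a nonzero element of a torsion homotopy group; I expect this to require running the descent (Adams--Novikov-type) spectral sequence for $E_\Gamma^{hS\mathbb G_\Gamma^{\pm}}$ and controlling its differentials --- exactly the input that is unavailable at general heights. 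For this reason an unconditional proof at all heights seems out of reach, and the realistic deliverables are (i) a proof \emph{conditional on the rational chromatic splitting conjecture} that $\iota'$ is the predicted dualizing class, and (ii) an unconditional verification at $d\le 2$, where both $K^{\det}$ and the rational homotopy of $M_d\S^0$ are understood --- and where already at $d=1$ one sees directly that $\iota'$ is the identity of $\S^2=\XP^1$, the Pontryagin dual of $\Q_p/\Z_p=\pi_{-2}M_1\S^0$.
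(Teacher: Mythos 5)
The statement you were given is a \emph{conjecture}, placed in the paper's closing section of open questions and accompanied by the author's own caveat that the picture is ``quite shaky at small primes.'' The paper offers no proof, so there is nothing in the paper to compare your proposal against.

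Your write-up is, as you acknowledge, a research sketch rather than a proof, and your overall assessment---that an unconditional argument at general height is currently out of reach---agrees with the paper's own treatment of this as an open problem. Your diagnosis of the central difficulty is sound: for $d\ge 2$, where $\S^{d+1}\not\simeq\S^{\det}$, any nonzero $K(d)_*$-map between invertible spectra would be an isomorphism of $K_*$-lines, hence a $\Gamma$-local equivalence, so $\iota'$ must be $K(d)_*$-zero and can only be detected by finer invariants (descent, Picard-graded homotopy, etc.). The $d=1$ sanity check is correct ($\XP^1=\CP^1=\S^2$, $\beta^{\det}=\beta$, and $\iota'=\mathrm{id}$), though note that at $d=1$ the source and target coincide and the invisibility argument you stated does not apply there; also, the claim that $\iota_{d+1}$ is $K$-zero follows from $\iota_{d+1}=\beta^{\det}\circ\iota'$ rather than from $\iota_{d+1}$ itself being a map of Picard elements (its target is not invertible). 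Your proposed deliverables---a conditional argument granting the rational chromatic splitting conjecture, and an explicit low-height check via the $E_\Gamma^{hS\mathbb G_\Gamma^\pm}$ model of $K^{\det}$---are reasonable intermediate goals, but again neither is carried out in the paper, which poses the statement as a conjecture precisely because these steps have not been completed.
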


\noindent A positive resolution of this conjecture could give a foothold on a topological explanation for the chromatic splitting conjecture.

The reader should beware that the validity of this line of questioning is quite shaky at at small primes.  It is directly affected by the preceding question: it is unclear how to account for the possibility that $S^{\det} \sm \widehat{\mathbb I}_{\Q/\Z}^{-1}$ is a nontrivial exotic Picard element.  Furthermore, Beaudry has shown the splitting conjecture to be \emph{false} at a small prime (viz., $d = 2$ and $p = 2$), and an adjusted version has not yet been set out.

\subsection{Validity at small primes}

Several of the utility theorems in this paper required the large prime assumption in the proofs presented here, but it is not clear that this assumption is necessary.  In particular, it would be extremely desirable---and not just for the purposes of this paper!---to have a version of \Cref{SadofskysTheoremInCotangentSection} (cf.\ \Cref{SadofskyInverseLimits}) that does not distinguish between the large and small prime cases.

\subsection{Analogues of $BU(n)$}

The construction of $\XP^\infty$ and $\XP^1$ given here, as well as Westerland's construction of determinantal analogues of $L_{K(1)} BU$, of $L_{K(1)} MU$, and of a kind of ``\(J\)--map'', may well be a premonition of a much more extensive story of ``determinantal homotopy theory''.  The most enticing and impactful facet of this would be to find determinantal analogues of the intermediate spaces \(BU(n)\), to understand these as classifying some manner of geometric object, and to be able to interpret them as \emph{spaces} in some kind of unstable determinantal context.  Several more questions long these lines can be found at the end of Westerland's work~\cite[Section 5.2 and Section 7]{Westerland}.

\appendix
\section{Homological algebra of comodules for a Hopf $k$--algebra}
\label{LimitAppendix}

In this section we describe a result, attributed to Hal Sadofsky and expressed in a talk by Mike Hopkins~\cite[Section 14]{MITETheory}, concerning the homology of inverse limits of certain local systems.\footnote{The interested reader can also find related results in a paper of Hovey~\cite{Hovey}.}  None of the material in this section is original; all of it was known to (at least) Hal Sadofsky and Mike Hopkins, and is of ``folk lore'' status among the experts who might be interested.  Sadofsky's theorem is stated as follows:

\begin{theorem}[{\cite{SadofskyHk,SadofskyChromatic}}]\label{SadofskyInverseLimits}
Let $k$ be a field spectrum, i.e., let $k$ be a ring spectrum with $k_*$ a graded field.  (In the case that $k$ is a Morava $K$--theory for $\Gamma$, we require $p \gg \height \Gamma$.)  Furthermore let $X_\alpha$ be a pro-system of $k$--local spectra such that $k_* X_\alpha$ is a Mittag-Leffler system of $k_*$--modules.  There is then a conditionally convergent spectral sequence of signature \[R^* \mylim{\CatOf{Comod}_{k_* k}}{\alpha} k_* X_\alpha \Rightarrow k_* \mylim{\CatOf{Spectra}}{\alpha} X_\alpha,\] where the derived inverse limit on the left is taken in an appropriate category of comodules.
\end{theorem}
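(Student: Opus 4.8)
The plan is to obtain the spectral sequence from the standard homotopy-limit machinery applied to $k$--homology, and then to identify its input algebraically. First I would present $\mylim{\CatOf{Spectra}}{\alpha} X_\alpha$ as the totalization of the Bousfield--Kan cosimplicial replacement of the pro-system---in the applications the indexing is by $\N$, where this is just the usual $\Tot$--tower of a cosimplicial spectrum built from products $\prod X_{\alpha_s}$ over chains $\alpha_0 \to \cdots \to \alpha_s$. Smashing the $\Tot$--tower with $k$ and passing to homotopy groups then produces, by Boardman's analysis of $\Tot$--towers, a conditionally convergent spectral sequence abutting to $k_* \mylim{\CatOf{Spectra}}{\alpha} X_\alpha$, whose $E_1$--term is the $k_*$--homology of the cosimplices $\prod X_{\alpha_s}$ equipped with the cobar differential. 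So far nothing about fields or comodules has been used; this part is purely formal.

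The content lies in computing that $E_1$--page, and in particular in recognizing that the relevant inverse limits must be taken in comodules rather than in modules. Because $k$ is a field spectrum, $k_* k$ is a graded Hopf algebroid and $k_*$ of any spectrum is canonically a $k_* k$--comodule, so the $E_1$--complex lifts to a cochain complex of comodules and its cohomology is a derived inverse limit \emph{in $\CatOf{Comod}_{k_* k}$}. This is not a cosmetic distinction: the category of $k_* k$--comodules is Grothendieck abelian with enough injectives, but its products and inverse limits differ from the underlying $k_*$--module ones, and in particular a tower of comodules with surjective transition maps can have nonvanishing $\lim^1$ even though the underlying module tower is Mittag-Leffler. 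The key lemma to establish is therefore a continuity statement for $k$--homology: for each product $\prod X_{\alpha_s}$ arising in the cosimplicial replacement, the natural map from $k_*(\prod X_{\alpha_s})$ to the \emph{comodule}-level product of the $k_* X_{\alpha_s}$ is an isomorphism. This is where the Mittag-Leffler hypothesis does its work---it is precisely what prevents the failure of $k_*$ to commute with infinite products from contributing anything beyond what the comodule-level product already records---and, in the Morava $K$--theory case, it is also the place the hypothesis $p \gg \height \Gamma$ is needed, to guarantee that $K \sm K$ and the Hopf algebroid $K_* K$ have their expected well-behaved form (graded-field coefficients, K\"unneth isomorphisms, and the homological control of $K_* K$--comodules that the argument relies on). Granting this lemma, the $E_1$--complex is identified with a cobar complex whose cohomology is $R^* \mylim{\CatOf{Comod}_{k_* k}}{\alpha} k_* X_\alpha$, and the spectral sequence acquires the stated signature.

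The main obstacle is this continuity lemma. That $k$--homology fails to commute with infinite products of spectra is exactly the phenomenon that makes the theorem nontrivial, and the heart of the proof is showing that under the Mittag-Leffler condition the discrepancy is captured, on the nose, by passing from modules to comodules. A secondary and more routine issue is convergence: since only conditional convergence is asserted, I would content myself with verifying Boardman's hypotheses for the $\Tot$--tower and would not attempt to strengthen this in general---in the settings where the theorem is applied, such as \Cref{CorrectConvergence}, an additional pro-constancy or Mittag-Leffler input on $k$--homology is available and upgrades this to strong convergence there.
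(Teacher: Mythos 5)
Your approach is genuinely different from the paper's, and it has a gap where the paper's route does not.  The paper constructs the spectral sequence from the $E(d)$--Adams tower rather than from a Bousfield--Kan cosimplicial replacement: it takes the inverse limit over $\alpha$ \emph{levelwise in the Adams filtration}, so that the terms whose $E(d)$--homology must be computed are of the form $\lim_\alpha E(d) \sm \overline{E(d)}^{\sm t} \sm X_\alpha$ --- inverse limits of $E(d)$--\emph{module} spectra.  This is where the Mittag-Leffler hypothesis does its work: $E(d)_*$ of an $E(d)$--module spectrum is an \emph{extended} comodule (\Cref{EHomologyOfModules}), inverse limits of extended comodules agree whether computed in modules or comodules (\Cref{LimitsAgreeForExtendedThings}), and extended comodule systems built from Mittag-Leffler ones are flasque (\Cref{ProducingFlasqueResns}), so the cobar complex that appears on $E_1$ manifestly computes $R^*\lim$ in comodules (\Cref{CobarComputesDerivedLimit}).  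Convergence is then strong, coming from the horizontal vanishing line supplied by prenilpotence of the local sphere (\Cref{SphereIsPrenilpotent}, \Cref{FiniteEnResolutions}).  Finally, the case $k = K(d)$ is reduced to the $E(d)$ case by smashing the pro-system with a Smith--Toda complex $V(d-1)$, and \emph{this} is where the hypothesis $p \gg \height\Gamma$ enters (\Cref{ExistenceOfSTComplexes}) --- it is a sparseness/vanishing-line argument guaranteeing $V(d-1)$ exists, not, as you suggest, a condition needed to make the Hopf algebroid $K_*K$ well-behaved.

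The difficulties your route runs into are precisely the ones the paper's construction is designed to dodge.  Your $E_1$--page consists of the groups $k_*(\prod_{\alpha_0 \to \cdots \to \alpha_s} X_{\alpha_s})$, and you isolate as the ``main obstacle'' the claim that these agree with the product of the $k_* X_{\alpha_s}$ taken in $\CatOf{Comod}_{k_* k}$.  That claim is not proven in the paper, and it is not standard: the failure of $K(d)_*$ to commute with arbitrary products of $K(d)$--local spectra is a genuinely delicate phenomenon, and there is no evident reason it should be captured on the nose by the module/comodule distinction.  Moreover, even granting that identification, a second gap remains: the cohomology of a Bousfield--Kan cosimplicial replacement computes a derived inverse limit only when the ambient abelian category has exact products (or the cosimplices are suitably $\lim$--acyclic), and $\CatOf{Comod}_{k_*k}$ is in general not AB4*.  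The paper never needs to face either problem, because the objects appearing in its cochain complex are cobar cochains --- extended comodules --- which are flasque by construction.  So while your outline correctly identifies ``the content lies in computing the $E_1$--page'' and ``the distinction between module and comodule limits is not cosmetic,'' the specific lemma you propose to carry the weight is considerably harder than, and different from, the lemma the paper actually uses.
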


\begin{remark}
There is room for improvement in this result: the author sees no reason to require $p \gg d$ except for his own ineptness.  It is quite likely that it holds without this assumption, and a proof of this would strengthen many of the results in this paper.
\end{remark}

\begin{remark}
Locality is \emph{the} essential assumption.  For instance, set $k = H\Q$ and consider the system $\{\S^0 / p^j\}_j$, with maps the natural projections.  The constituent spaces in this system are all rationally acyclic, but the inverse limit is given by the $p$-adic sphere $(\S^0)^\wedge_p$.  Its rational homology is $H\Q_* (\S^0)^\wedge_p \cong \Q_p$, and hence no such convergent spectral sequence can exist.  On the other hand, first rationalizing this system produces the trivial system of zero spectra, and thus the rational homology of the system---which is empty---compares well to the rational homology of the inverse limit---which is also empty.  Noting that $\S^0 / p^j$ is also known as the Moore spectrum $M_0(p^j)$, similar systems can also be constructed for any Morava $K$--theory by employing the generalized Moore spectra of Hopkins and Smith~\cite[Proposition 5.14]{HopkinsSmith}.
\end{remark}

\begin{remark}
In the case $k = H\Q$, the algebra of cooperations is trivial, i.e., $H\Q_* H\Q \cong \Q$.  It follows that the categories of $\Q$--modules and $(H\Q_* H\Q)$--comodules are equivalent, and so Sadofsky's spectral sequence becomes \[R^* \mylim{\CatOf{Modules}_{\Q}}{\alpha} H\Q_* X_\alpha \Rightarrow k_* \mylim{\CatOf{Spectra}}{\alpha} X_\alpha.\]  Because of the Mittag-Leffler condition the higher derived inverse limits vanish, and hence we have the isomorphism \[\mylim{\CatOf{Modules}_{\Q}}{\alpha} H\Q_* X_\alpha \cong k_* \mylim{\CatOf{Spectra}}{\alpha} X_\alpha\] familiar from the Milnor sequence.  In general, we will work to control the first step in this argument (i.e., the derived inverse limits of \emph{comodules}), while retaining hypotheses so that the second step in the argument (i.e., the derived inverse limits of \emph{modules}) remains trivial.
\end{remark}

\begin{remark}\label{ReintroducingMilnorSeqs}
For any sequence of spectra $(X_\alpha)$, the inverse system $\{Y_\alpha := \prod_{\beta \le \alpha} X_\beta\}$ with maps given by projections is Mittag-Leffler.  Then, using the fiber sequence $\lim_{\CatOf{Spectra}}^\alpha X_\alpha \to \prod_\alpha X_\alpha \to \prod_\alpha X_\alpha$, applying $k$--homology gives \[\cdots \to k_* \mylim{\CatOf{Spectra}}{\alpha} X_\alpha \to k_* \left(\prod_\alpha X_\alpha\right) \to k_* \left(\prod_\alpha X_\alpha\right) \to \cdots.\] The middle and right--hand terms of this sequence are calculable by Sadofsky's theorem, even if the inverse system $\{X_\alpha\}$ is itself not Mittag-Leffler, giving some foothold on that case as well.
\end{remark}

\begin{remark}
We remark yet again that computations in the $E_2$--page of this spectral sequence are \emph{exceedingly} complex and promise to shed light on some of the most long-standing conjectures in chromatic homotopy theory.  Our purposes, however, only concern the behavior of this spectral sequence in a maximally degenerate case.
\end{remark}

\subsection{Homological algebra for inverse systems of comodules}

Before engaging in any algebraic topology, we will first make sense of the homological algebra necessary to study derived inverse limits of pro-systems of comodules.  The homological algebra of comodules is well documented elsewhere~\cite[Appendix 1]{RavenelGreenBook}, but the homological algebra of \emph{diagrams} of comodules is more scarce.  Though we are ultimately interested in the case of a Hopf algebroid $(E_*, E_* E)$, in this section we will refer to this pair opaquely as $(A, \Gamma)$ with $\Gamma$ flat over $A$.  To begin, we recall some classical results.

\begin{lemma}[{\cite[Lemma A1.2.1-2]{RavenelGreenBook}}]\label{ClassicalHomologicalAlgebraForComodules}
A comodule $Y$ of the form $Y = \Gamma \otimes_A Y'$ for an $A$--module $Y'$ is said to be an extended comodule.  Such comodules participate in an adjunction\footnote{This adjunction should be thought of as geometrically analogous to the adjunction \[\CatOf{Sets}(X, Y) \cong \text{$G$-}\CatOf{Sets}(G \times X, Y).\]} \[\CatOf{Modules}_A(M, Y') \cong \CatOf{Comod}_{(A, \Gamma)}(M, Y).\]  Furthermore, if $I$ is an injective $A$--module, then $\Gamma \otimes_A I$ is an injective $\Gamma$--comodule, and hence $\CatOf{Comod}_{(A, \Gamma)}$ has enough injectives. \qed
\end{lemma}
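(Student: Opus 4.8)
The plan is to proceed in three stages: build the extended comodule, verify the adjunction, and then extract the two stated consequences formally. For the first stage I would pin down the coaction on the candidate extended comodule $Y = \Gamma \otimes_A Y'$: using the comultiplication $\Delta\co \Gamma \to \Gamma \otimes_A \Gamma$ of the Hopf algebroid together with the hypothesis that $\Gamma$ is flat over $A$ (so that $- \otimes_A Y'$ is exact and the bracketing isomorphisms are unambiguous), the map $\Delta \otimes_A Y'$ endows $\Gamma \otimes_A Y'$ with a $\Gamma$--comodule structure, coassociative and counital by the corresponding axioms for $\Delta$ and $\eps$. Here one must be careful about which of the two units $\eta_L$, $\eta_R$ is used to form each tensor factor, but this is precisely the bookkeeping that the Hopf algebroid axioms are engineered to make consistent.

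Next I would establish the natural bijection $\CatOf{Comod}_{(A, \Gamma)}(M, Y) \cong \CatOf{Modules}_A(M, Y')$. In one direction, postcompose a comodule map $f\co M \to \Gamma \otimes_A Y'$ with $\eps \otimes_A Y'\co \Gamma \otimes_A Y' \to Y'$ to obtain an $A$--module map. In the other, send an $A$--module map $g\co M \to Y'$ to the composite $M \xrightarrow{\psi_M} \Gamma \otimes_A M \xrightarrow{\Gamma \otimes g} \Gamma \otimes_A Y'$, where $\psi_M$ is the coaction on $M$. A short diagram chase using the coassociativity of $\psi_M$ and the counit identity $(\eps \otimes_A M) \circ \psi_M = \mathrm{id}_M$ shows these assignments are mutually inverse, and naturality in $M$ and $Y'$ is immediate. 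Thus the extension functor $Y' \mapsto \Gamma \otimes_A Y'$ is right adjoint to the forgetful functor $U\co \CatOf{Comod}_{(A, \Gamma)} \to \CatOf{Modules}_A$.

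The two consequences then follow from general principles. Because $\Gamma$ is flat over $A$, kernels and cokernels of comodule maps are computed on underlying $A$--modules, so $U$ is exact; a right adjoint of an exact functor preserves injectives, whence $\Gamma \otimes_A I$ is an injective comodule for every injective $A$--module $I$. For enough injectives, take an arbitrary comodule $M$, embed its underlying module into an injective $A$--module $M \hookrightarrow I$, apply the exact functor $\Gamma \otimes_A -$ to obtain a comodule monomorphism $\Gamma \otimes_A M \hookrightarrow \Gamma \otimes_A I$, and precompose with the coaction $\psi_M\co M \to \Gamma \otimes_A M$; the latter is a comodule map, and it is split monic on underlying modules by the counit identity, so the composite $M \hookrightarrow \Gamma \otimes_A I$ exhibits $M$ inside an injective comodule.

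I expect the main obstacle to be notational rather than conceptual: tracking the left and right $A$--module structures on $\Gamma$ and on the iterated tensor products (equivalently, keeping $\eta_L$, $\eta_R$, and $\eps$ in their correct slots in every diagram), and in particular confirming that $\eps$ splits $\psi_M$ as a map of $A$--modules for the module structure one actually needs. Flatness of $\Gamma$ over $A$ should be invoked explicitly at the two places it is used --- exactness of $U$, and preservation of monomorphisms by $\Gamma \otimes_A -$.
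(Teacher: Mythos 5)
Your proof is correct and is exactly the standard argument that the paper delegates to \cite[Lemma A1.2.1-2]{RavenelGreenBook} rather than reproducing (the paper's ``\qed'' marks a citation, not a proof). In particular you have correctly identified the extension functor $\Gamma \otimes_A -$ as \emph{right} adjoint to the exact forgetful functor $U$, and the two consequences (injectives are preserved by right adjoints of exact functors; every comodule embeds via the split-mono coaction $\psi_M \co M \to \Gamma \otimes_A UM$ followed by $\Gamma \otimes_A (UM \hookrightarrow I)$) follow just as you say, with flatness of $\Gamma$ over $A$ invoked precisely where needed.
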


\begin{corollary}\label{LimitsAgreeForExtendedThings}
If $\{M_\alpha\}_\alpha$ is a pro-system of $A$--modules and $\{\Gamma \otimes_A M_\alpha\}_\alpha$ is the induced pro-system of extended $\Gamma$--comodules, then there is an isomorphism \[\Gamma \otimes_A \mylim{\CatOf{Modules}_A}{\alpha} M_\alpha \cong \mylim{\CatOf{Comod}_\Gamma}{\alpha} (\Gamma \otimes_A M_\alpha). \pushQED\qed\qedhere\popQED\]
\end{corollary}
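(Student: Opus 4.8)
The plan is to deduce this formally from \Cref{ClassicalHomologicalAlgebraForComodules}, so the proof should be short and the only real work is conceptual rather than computational. That lemma supplies a natural isomorphism $\CatOf{Modules}_A(M, Y') \cong \CatOf{Comod}_\Gamma(M, \Gamma \otimes_A Y')$ in which $M$ is a comodule and $\CatOf{Modules}_A(M, Y')$ denotes maps out of its underlying $A$--module; read this way it says exactly that the forgetful functor $U\colon \CatOf{Comod}_\Gamma \to \CatOf{Modules}_A$ is left adjoint to the extended--comodule functor $\Gamma \otimes_A -$. The first step is simply to record this, i.e.\ that $\Gamma \otimes_A -$ is a \emph{right} adjoint (nothing beyond the displayed hom--isomorphism is needed). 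The second step is to note that $\CatOf{Modules}_A$ is complete, so the pro--system $\{M_\alpha\}$ has a limit $L := \mylim{\CatOf{Modules}_A}{\alpha} M_\alpha$. The third step is to invoke the general fact that a right adjoint preserves limits: applying $\Gamma \otimes_A -$ to the limiting cone over $\{M_\alpha\}$ yields a cone over $\{\Gamma \otimes_A M_\alpha\}$ which is terminal in $\CatOf{Comod}_\Gamma$, hence exhibits $\Gamma \otimes_A L \cong \mylim{\CatOf{Comod}_\Gamma}{\alpha}(\Gamma \otimes_A M_\alpha)$. This is the asserted isomorphism. Flatness of $\Gamma$ over $A$ plays no role in the statement itself --- it only ensures that $\CatOf{Comod}_\Gamma$ is an abelian category in which such limits live --- and in fact the argument just given \emph{constructs} the comodule limit rather than presupposing it.

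The one genuine subtlety, and the closest thing to an obstacle, is to keep the two flavours of limit scrupulously apart. Limits in $\CatOf{Comod}_\Gamma$ are \emph{not} computed on underlying $A$--modules: a product of comodules is generally not the product of the underlying modules, and although $\Gamma \otimes_A -$ is exact it does not commute with infinite products (exactness is not continuity). So one must resist the temptation to shortcut the proof by ``commuting $\Gamma \otimes_A -$ past the $A$--module limit of the underlying diagram'': the comparison map $\Gamma \otimes_A \big(\mylim{\CatOf{Modules}_A}{\alpha} M_\alpha\big) \to \mylim{\CatOf{Modules}_A}{\alpha}(\Gamma \otimes_A M_\alpha)$ of $A$--modules need not be an isomorphism. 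The adjunction sidesteps this entirely by identifying the \emph{comodule} limit with $\Gamma \otimes_A L$ on the nose, and it is exactly this description of the answer as an extended comodule induced from an $A$--module that will make the corollary usable downstream, where it feeds the analysis of Sadofsky's spectral sequence.

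If a hands--on verification is wanted in place of the slick one, the same conclusion follows by writing the comodule limit as an equalizer $\mylim{\CatOf{Comod}_\Gamma}{\alpha}(\Gamma \otimes_A M_\alpha) = \ker\!\big(\prod_\alpha (\Gamma \otimes_A M_\alpha) \rightrightarrows \prod_{\alpha \to \beta}(\Gamma \otimes_A M_\beta)\big)$ computed in $\CatOf{Comod}_\Gamma$ --- where, again by the right--adjoint property now applied to discrete diagrams, $\prod_\alpha(\Gamma \otimes_A M_\alpha) = \Gamma \otimes_A \prod_\alpha M_\alpha$ --- and then identifying this kernel with $\Gamma \otimes_A \ker(\prod_\alpha M_\alpha \rightrightarrows \prod_{\alpha \to \beta} M_\beta) = \Gamma \otimes_A L$. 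This merely reproves by hand the instances of ``right adjoints preserve limits'' that the one--line argument already packages, so I would not actually carry it out.
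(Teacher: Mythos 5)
Your proposal is correct and is exactly the argument the paper intends: \Cref{ClassicalHomologicalAlgebraForComodules} exhibits $\Gamma \otimes_A -$ as right adjoint to the forgetful functor, so it carries the limit computed in $\CatOf{Modules}_A$ to a limit in $\CatOf{Comod}_\Gamma$, which is why the paper states the corollary with no further proof. Your cautionary remark distinguishing the comodule-level limit from the (generally different) $A$--module-level comparison map is apt and does not change the argument.
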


Now consider the category $\CatOf{Pro}\CatOf{Comod}_{(A, \Gamma)}$ of pro-systems of comodules.
\begin{lemma}[Sadofsky]\label{SystemsOfComodulesHaveEnoughInjectives}
The category $\CatOf{ProComod}_\Gamma$ has enough injectives.  That is, for $X$ a pro-system of $(A, \Gamma)$--comodules, there is a levelwise injection to an injective system $J$ (which can additionally be taken to be Mittag-Leffler).
\end{lemma}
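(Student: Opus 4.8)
The plan is to import the standard mechanism by which a diagram category inherits enough injectives from its target, and then to observe that, for sequential index categories, the injectives this mechanism produces are automatically Mittag-Leffler. Throughout, I would regard a pro-system as a functor $X\colon I^{\op}\to\CatOf{Comod}_\Gamma$; for every application in this paper one may take $I=\N$, and I will treat that case first. Recall from \Cref{ClassicalHomologicalAlgebraForComodules} that $\CatOf{Comod}_\Gamma$ itself has enough injectives.

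First I would record that for each object $\alpha\in I$ the evaluation functor $e_\alpha\colon\CatOf{ProComod}_\Gamma\to\CatOf{Comod}_\Gamma$, $X\mapsto X_\alpha$, is exact, since (co)limits in a functor category are formed objectwise; consequently its right adjoint, the right Kan extension $r_\alpha$ along $\{\alpha\}\hookrightarrow I^{\op}$---computed by $r_\alpha(C)_\beta=\prod_{\operatorname{Hom}_{I^{\op}}(\beta,\alpha)}C$---carries injective comodules to injective objects of $\CatOf{ProComod}_\Gamma$. Given an arbitrary pro-system $X$, I would then choose monomorphisms $X_\alpha\hookrightarrow I_\alpha$ into injective comodules and set $J:=\prod_\alpha r_\alpha(I_\alpha)$, with $X\to J$ the map whose $\alpha$-th coordinate is adjoint to $X_\alpha\hookrightarrow I_\alpha$. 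As a product of injectives, $J$ is injective; and $X\to J$ is a levelwise monomorphism, because evaluating at $\beta$ and projecting to the $\alpha=\beta$ factor recovers the chosen monomorphism $X_\beta\hookrightarrow I_\beta$ (the counit $e_\beta r_\beta\Rightarrow\operatorname{id}$ is the identity at $I_\beta$) and a map into a product having a monic coordinate is monic. This already establishes enough injectives in $\CatOf{ProComod}_\Gamma$.

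For the Mittag-Leffler refinement I would specialize to $I=\N$: then $\operatorname{Hom}_{\N^{\op}}(\beta,\alpha)$ is a one-point set for $\beta\ge\alpha$ and empty otherwise, so $r_\alpha(I_\alpha)$ is the pro-system that equals $I_\alpha$ with identity transition maps in degrees $\ge\alpha$ and vanishes below degree $\alpha$. The product $J$ is then \emph{degreewise finite}---its $m$-th term is the biproduct $\bigoplus_{\alpha\le m}I_\alpha$---and its transition maps are the evident surjections $\bigoplus_{\alpha\le m'}I_\alpha\twoheadrightarrow\bigoplus_{\alpha\le m}I_\alpha$. A pro-system with surjective transition maps is Mittag-Leffler, so $J$ is Mittag-Leffler, completing the construction. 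For a general small cofiltered $I$ the same recipe works verbatim provided one knows $\CatOf{Comod}_\Gamma$ is complete (so that the possibly infinite product $J$ exists, $\CatOf{Comod}_\Gamma$ being a Grothendieck category), but the clean Mittag-Leffler bookkeeping is particular to the sequential case, which is the only one needed here.

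The argument is soft, and I do not anticipate a genuine obstacle. The only two inputs that actually require checking are that $\CatOf{Comod}_\Gamma$ has enough injectives---already supplied by \Cref{ClassicalHomologicalAlgebraForComodules}---and the elementary observation, particular to $I=\N$, that the Kan-extended injectives $r_\alpha(I_\alpha)$ are transparent enough to assemble into a degreewise-finite product with surjective transition maps; the ``hard part'', such as it is, is merely noticing that a single construction (the product of Kan extensions) meets both requirements---injectivity and Mittag-Leffler-ness---at once.
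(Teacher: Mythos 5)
Your construction produces literally the same object $J$ as the paper's: for $I=\N$ your $J_m=\prod_{\alpha\le m}r_\alpha(I_\alpha)_m=\prod_{\alpha\le m}I_\alpha$ is the paper's $J_\alpha=\prod_{\beta\le\alpha}J'_\beta$ with projections as transition maps, and the embedding $X\to J$ agrees coordinatewise. The genuine difference is in how injectivity of $J$ is certified. The paper proceeds by hand: it runs an explicit (and implicitly well-founded) induction over the index category, at each stage using ordinary injectivity of $J'_\alpha$ in $\CatOf{Comod}_\Gamma$ to extend a partially constructed lift. You instead observe that evaluation $e_\alpha$ is exact (limits and colimits in a functor category are pointwise), hence its right adjoint $r_\alpha=\operatorname{Ran}_{\{\alpha\}\hookrightarrow I^{\op}}$ carries injective comodules to injective objects of the diagram category, and then you invoke closure of injectives under products. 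This is cleaner and strictly more robust: it makes no well-foundedness or sequentiality assumption on $I$ for the ``enough injectives'' part (only the Mittag-Leffler refinement is particular to $\N$), and it avoids the bookkeeping of compatible levelwise lifts entirely, at the cost of requiring the reader to know that $\CatOf{Comod}_\Gamma$ is complete (true, since for $\Gamma$ flat over $A$ it is Grothendieck) so that the product $J=\prod_\alpha r_\alpha(I_\alpha)$ exists. Both proofs are correct; yours better isolates the two independent facts being used (Kan-extended injectives are injective; a product of right-Kan-extensions off a sequential poset has surjective transition maps), which is exactly the ``hard part'' you correctly identify as merely noticing that one construction satisfies both demands.
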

\begin{proof}
Begin by, for each $\alpha$ in the indexing category, choosing an injection $j'_\alpha: X_\alpha \to J'_\alpha$ with $J'_\alpha$ an injective comodule.  We form a diagram $J$ equipped with a levelwise injection $j: X \to J$ by setting $J_\alpha = \prod_{\beta \le \alpha}^n J'_\beta$, with the structure map $J_\beta^\alpha\co J_\alpha \to J_\beta$ given by projection, and with $j$ defined by \[j_\alpha\co X_\alpha \xrightarrow{\prod_{\beta \le \alpha} j'_\beta \circ X^\alpha_\beta} \prod_{\beta \le \alpha} J'_\beta = J_\alpha.\]

We now check that this diagram has the relevant lifting property:
\begin{center}
\begin{tikzcd}
X \arrow{d} \arrow{r}{j} & J \\ Y \arrow[densely dotted]{ru}[description]{\exists k}
\end{tikzcd}
\end{center}
whenever the vertical arrow is a levelwise injection.  We argue inductively: for an index $\alpha$ without predecessors, the diagram reduces to
\begin{center}
\begin{tikzcd}
X_\alpha \arrow{d} \arrow{r}{j'_\alpha} & J_\alpha \\
Y_\alpha \arrow[densely dotted]{ru}[description]{\exists k'_\alpha},
\end{tikzcd}
\end{center}
which is precisely the diagram describing the classical injectivity condition.  Because $J_\alpha$ was selected to be an injective comodule under $X_\alpha$, such an extension exists.  In the case of a general index $\alpha$, we have the following diagram:
\begin{center}
\begin{tikzcd}[column sep=3.5em]
\prod_{\beta < \alpha} X_\beta \arrow{rrr}{\prod_{\beta' \le \beta < \alpha} j'_{\beta'} \circ X^\beta_{\beta'}} \arrow{ddd} & & & \prod_{\beta < \alpha} J'_\beta \\
& X_\alpha \arrow{d} \arrow{r}{\prod_{\beta \le \alpha} j'_\beta \circ X^\alpha_\beta} \arrow{lu}[description]{\prod_{\beta < \alpha} X^\alpha_\beta} & \prod_{\beta \le \alpha} J'_\beta \arrow{ru}[description]{0_\alpha \times \prod_{\beta < \alpha} 1_\beta} \\
& Y_\alpha \arrow[densely dotted]{ru} \arrow{ld} \\
\prod_{\beta < \alpha} Y_\beta \arrow[bend right]{rrruuu}[description]{\prod_{\beta < \alpha} k_\beta},
\end{tikzcd}
\end{center}
The outer triangle exists and commutes by inductive assumption, and the connecting arrows are part of the data of the pro-systems.  The dashed map is specified by a pair of morphisms $Y_\alpha \to \prod_{\beta < \alpha} J'_\beta$ and $Y_\alpha \to J'_\alpha$.  The former arrow is specified by restriction to the outer triangle.  For the latter arrow, we use the injectivity of $J'_\alpha$ to select an arrow
\begin{center}
\begin{tikzcd}
X_\alpha \arrow{d} \arrow{r}{j'_\alpha} & J'_\alpha \\
Y_\alpha \arrow[densely dotted]{ru}[description]{\exists}.
\end{tikzcd}
\end{center}
\vspace{-1.8\baselineskip}
\end{proof}

\begin{remark}
It is not true that a diagram whose objects consist of levelwise injective comodules is always an injective object in inverse systems of comodules.  The construction above is designed to skirt past questions of compatibility of levelwise lifts.
\end{remark}

Because we have enough injectives, the general machinery of homological algebra applies to produce right--derived functors of the left--exact inverse limit functor \[\mylim{\CatOf{Comod}_\Gamma}{}: \CatOf{ProComod}_{\Gamma} \to \CatOf{Comod}_\Gamma.\]  Moreover, there is a cobar complex which performs this computation.  To see this, we first remark on a consequence of the adjunction of \Cref{ClassicalHomologicalAlgebraForComodules}.

\begin{lemma}[Sadofsky]\label{ProducingFlasqueResns}
If $M_\alpha$ is a Mittag-Leffler pro-system of $A$--modules, then the induced system $M_\alpha \otimes_A \Gamma$ of extended $\Gamma$--comodules is flasque.
\end{lemma}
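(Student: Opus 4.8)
The plan is to upgrade \Cref{LimitsAgreeForExtendedThings} to a statement about \emph{derived} inverse limits: I want to show that the extended--comodule functor $M \mapsto \Gamma \otimes_A M$ intertwines $R^*\mylim{\CatOf{Comod}_\Gamma}{\alpha}$ with $R^*\mylim{\CatOf{Modules}_A}{\alpha}$, so that the flasqueness of $\{M_\alpha \otimes_A \Gamma\}$ --- i.e.\ acyclicity for the inverse limit, $R^i\mylim{\CatOf{Comod}_\Gamma}{\alpha}$ vanishing on it for all $i>0$ --- drops out of the classical theorem that a Mittag--Leffler pro-system of $A$--modules has vanishing higher inverse limits.

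First I would record two formal properties of $\Gamma \otimes_A - \co \CatOf{Modules}_A \to \CatOf{Comod}_\Gamma$. It is exact, since $\Gamma$ is flat over $A$ and kernels and cokernels of comodules are computed on underlying $A$--modules; hence its levelwise extension $\CatOf{ProModules}_A \to \CatOf{ProComod}_\Gamma$ is exact as well. Moreover the adjunction $\CatOf{Modules}_A(M, Y') \cong \CatOf{Comod}_\Gamma(M, \Gamma \otimes_A Y')$ of \Cref{ClassicalHomologicalAlgebraForComodules} postcomposes to an adjunction between the (levelwise exact) forgetful functor $\CatOf{ProComod}_\Gamma \to \CatOf{ProModules}_A$ and the pro-system extension of $\Gamma \otimes_A -$. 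Since a right adjoint of an exact functor preserves injectives, $\Gamma \otimes_A -$ carries injective pro-$A$--modules to injective pro-comodules.

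Then I would choose an injective resolution $\{M_\alpha\}_\alpha \to I^\bullet$ in $\CatOf{ProModules}_A$ (which has enough injectives, being Grothendieck abelian). Applying $\Gamma \otimes_A -$ termwise produces, by exactness, a resolution $\{M_\alpha \otimes_A \Gamma\}_\alpha \to \Gamma \otimes_A I^\bullet$ whose terms are injective in $\CatOf{ProComod}_\Gamma$ by the previous step, so it computes $R^*\mylim{\CatOf{Comod}_\Gamma}{\alpha}\{M_\alpha \otimes_A \Gamma\}$. Applying $\mylim{\CatOf{Comod}_\Gamma}{\alpha}$ and invoking \Cref{LimitsAgreeForExtendedThings} one cochain degree at a time identifies the resulting complex with $\Gamma \otimes_A \mylim{\CatOf{Modules}_A}{\alpha} I^\bullet_\alpha$, whose cohomology --- again by exactness of $\Gamma \otimes_A -$ --- is $\Gamma \otimes_A R^i\mylim{\CatOf{Modules}_A}{\alpha}\{M_\alpha\}$. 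The Mittag--Leffler hypothesis kills $R^i\mylim{\CatOf{Modules}_A}{\alpha}\{M_\alpha\}$ for $i>0$, and flasqueness follows.

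The main obstacle is resisting the naive shortcut: one is tempted to note that $\{M_\alpha \otimes_A \Gamma\}$ is again a Mittag--Leffler system (which it is, by exactness of $\Gamma \otimes_A -$) and to conclude flasqueness directly. This does not work, because the Milnor/Roos-type argument that Mittag--Leffler systems have vanishing higher inverse limits uses exactness of infinite products, and products in a comodule category are not exact in general; this is precisely why the lemma needs a genuine proof rather than a one-line observation. Routing the computation through $\CatOf{Modules}_A$, where products are exact, is what avoids the problem, and the exact adjoint pair is what makes the detour legitimate. A secondary point to keep an eye on is that the classical vanishing theorem for Mittag--Leffler modules requires the index category to be tame enough --- e.g.\ cofinal with $\N$, or countable and directed of inverse-limit cohomological dimension $\le 1$ --- which holds for the skeletal and compact-exhaustion systems that arise here.
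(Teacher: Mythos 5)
Your argument is correct and is essentially the paper's proof unwound by hand: the paper packages the same ingredients (exactness of $\Gamma \otimes_A -$ by flatness, the adjunction of \Cref{ClassicalHomologicalAlgebraForComodules} making it a right adjoint to the exact forgetful functor and hence injective-preserving, \Cref{LimitsAgreeForExtendedThings}, and Mittag--Leffler vanishing in $\CatOf{Modules}_A$) into a Grothendieck spectral sequence for $\lim_{\CatOf{Comod}_\Gamma} \circ\, C$ that collapses immediately because $C$ is exact, whereas you track the injective resolution directly. Your closing remarks --- that the ``ML in $\CatOf{Comod}_\Gamma$ implies flasque'' shortcut is blocked by inexactness of infinite products of comodules, and that the index category must be tame enough for the module-level ML vanishing theorem --- are accurate caveats that the paper leaves implicit.
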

\begin{proof}
We continue to analyze the adjunction of \Cref{ClassicalHomologicalAlgebraForComodules}, where it will now be convenient to give these functors names: \[U\co \CatOf{Comod}_\Gamma \leftrightarrows \CatOf{Modules}_A \thinspace\thinspace\colon\!C.\]  Because $U$ is exact, there exists a Grothendieck spectral sequence \[E_2^{s,t} \cong (R^s \lim{}_{\CatOf{Comod}_\Gamma}^\alpha) (R^t C) M_\alpha \Rightarrow R^{s+t}(C \circ \lim{}_{\CatOf{Modules}_A}^{\alpha}) M_\alpha.\]  Additionally, because we have assumed $\Gamma$ to be flat as an $A$--module, $C$ is furthermore exact, i.e., $R^{> 0} C = 0$.  The spectral sequence then specializes to an isomorphism \[(R^s \lim{}_{\CatOf{Comod}_\Gamma}^\alpha) (M_\alpha \otimes_A \Gamma) \cong R^s(C \circ \lim{}_{\CatOf{Modules}_A}^{\alpha}) M_\alpha.\]  By restricting attention to Mittag--Leffler pro-systems, we have ensured that $\lim{}_{\CatOf{Modules}_A}^{\alpha}$ is exact, so that $C \circ \lim{}_{\CatOf{Modules}_A}^\alpha$ is the composition of two exact functors and hence has no higher derived functors.  Using the isomorphism, we conclude that the higher derived functors of $\lim{}_{\CatOf{Comod}_\Gamma}^\alpha$ also vanish on this system, i.e., the system is flasque.
\end{proof}

\begin{corollary}[Sadofsky]\label{CobarComputesDerivedLimit}
Write $\Cobar(\Gamma; \overline \Gamma; -)$ for the normalized one--sided cobar cochain complex: \[\Cobar(\Gamma; \overline \Gamma; -)[n] = \Gamma \otimes_A \overline \Gamma^{\otimes_A n} \otimes_A -.\]  If $X_\alpha$ is a Mittag-Leffler pro-system of comodules, then \[R^s \mylim{\CatOf{Comod}_\Gamma}{\alpha} X_\alpha = H^s \mylim{\CatOf{Comod}_\Gamma}{\alpha} \Cobar(\Gamma; \overline \Gamma; X_\alpha).\]
\end{corollary}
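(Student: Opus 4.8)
The plan is to exhibit $\Cobar(\Gamma; \overline\Gamma; X_\bullet)$ as an exact resolution of the pro-comodule $\{X_\alpha\}_\alpha$ by objects that are acyclic for $\lim{}^\alpha_{\CatOf{Comod}_\Gamma}$, and then to invoke the standard fact that the right--derived functors of a left--exact functor are computed by any resolution by acyclics. The left--exactness of $\lim{}^\alpha_{\CatOf{Comod}_\Gamma}$ is automatic, and its derived functors are available thanks to \Cref{SystemsOfComodulesHaveEnoughInjectives}, so once the resolution is in place the conclusion is formal.

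First I would recall that, for each fixed index $\alpha$, the augmented normalized cobar complex
\[
0 \to X_\alpha \to \Gamma \otimes_A X_\alpha \to \Gamma \otimes_A \overline\Gamma \otimes_A X_\alpha \to \Gamma \otimes_A \overline\Gamma^{\otimes_A 2} \otimes_A X_\alpha \to \cdots
\]
is exact, with a contracting homotopy assembled from the counit $\eps\co \Gamma \to A$; in particular it is already exact as a complex of $A$--modules. Since a sequence in $\CatOf{ProComod}_\Gamma$ is exact precisely when it is exact at each index, $\Cobar(\Gamma; \overline\Gamma; X_\bullet)$ is an exact resolution of $\{X_\alpha\}_\alpha$ in pro-comodules. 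Each of its terms, $\{\Gamma \otimes_A \overline\Gamma^{\otimes_A n} \otimes_A X_\alpha\}_\alpha$, is an extended comodule in the sense of \Cref{ClassicalHomologicalAlgebraForComodules}, induced up from the pro-system $\{\overline\Gamma^{\otimes_A n} \otimes_A X_\alpha\}_\alpha$ of $A$--modules.

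Next I would check that this underlying pro-system of $A$--modules is Mittag-Leffler. Because $\overline\Gamma$ is an $A$--module direct summand of $\Gamma$ (split off by the unit and counit), each $\overline\Gamma^{\otimes_A n}$ is flat over $A$; tensoring with a flat module preserves injections and hence commutes with the formation of images, so the image of $\overline\Gamma^{\otimes_A n} \otimes_A X_\beta \to \overline\Gamma^{\otimes_A n} \otimes_A X_\alpha$ is $\overline\Gamma^{\otimes_A n}$ tensored with the image of $X_\beta \to X_\alpha$. As the latter images stabilize in $\beta$ by hypothesis, so do the former. Thus \Cref{ProducingFlasqueResns} applies and shows that every cobar term is a flasque, i.e.\ $\lim{}^\alpha_{\CatOf{Comod}_\Gamma}$-acyclic, pro-comodule. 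Applying $\lim{}^\alpha_{\CatOf{Comod}_\Gamma}$ termwise to this flasque resolution and passing to cohomology therefore computes $R^s \lim{}^\alpha_{\CatOf{Comod}_\Gamma}\{X_\alpha\}_\alpha$, which is exactly the asserted formula.

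I expect the only genuinely delicate point to be the bookkeeping around normalization: one must verify that passing from the unnormalized to the normalized cobar complex preserves both exactness of the augmented complex and the extended--plus--flat structure of the terms, since it is precisely that structure which licenses the Mittag-Leffler computation above and the appeal to \Cref{ProducingFlasqueResns}. Everything else is the formal theory of derived functors.
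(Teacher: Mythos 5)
Your argument is correct and follows the same route as the paper: resolve the pro-system by the normalized cobar complex, identify the cobar terms as extended comodules over Mittag-Leffler pro-systems of $A$--modules, apply \Cref{ProducingFlasqueResns} to conclude flasqueness, and then invoke the standard theory of derived functors computed by acyclic resolutions. The paper's proof is a three-line sketch that leaves the exactness of the augmented cobar complex and the preservation of the Mittag-Leffler condition under tensoring with the flat $A$--module $\overline\Gamma^{\otimes_A n}$ implicit; you have correctly supplied both of these details, and your closing remark about the normalized/unnormalized comparison is a reasonable (and easily discharged) caution.
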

\begin{proof}
The cobar complex gives a functorial source of relative injective resolutions of the terms in the pro-system, hence gives a resolution of the pro-system itself.  For a fixed index $n$, the pro-system $\Cobar(\Gamma; \overline \Gamma; M_\alpha)[n]$ is Mittag-Leffler, hence we may apply \Cref{ProducingFlasqueResns} and the theory of flasque resolutions to conclude the indicated formula.
\end{proof}

\subsection{Sadofsky's theorem for finite height Morava $K$--theories}

Now we will discuss a similar theorem communicated to the author by Mike Hopkins~\cite[Section 14]{MITETheory} as a stepping stone toward Sadofsky's theorem for Morava $K$--theories.

\begin{theorem}[Hopkins]\label{HopkinsEnSystem}
Let $E(d)$ be a Johnson--Wilson spectrum and $\{X_\alpha\}_\alpha$ be a system of $E(d)$--local spectra such that $\{E(d)_* X_\alpha\}_\alpha$ is Mittag-Leffler.  There is then a convergent spectral sequence of signature \[R^*\mylim{\CatOf{Comod}_{E(d)_* E(d)}}{\alpha}E(d)_* X_\alpha \Rightarrow E(d)_* \mylim{\CatOf{Spectra}}{\alpha} X_\alpha.\]
\end{theorem}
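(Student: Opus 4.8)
My plan is to present this spectral sequence as the $E(d)$–homology spectral sequence of a cosimplicial resolution of $\mylim{\CatOf{Spectra}}{\alpha} X_\alpha$, and to identify its $E_2$–page using the homological algebra of pro-systems of comodules established above. Throughout write $E = E(d)$ and $(A, \Gamma) = (E_*, E_* E)$; because $E$ is Landweber exact, $\Gamma$ is flat over $A$, so $(A, \Gamma)$ is a Hopf algebroid to which \Cref{ClassicalHomologicalAlgebraForComodules} through \Cref{CobarComputesDerivedLimit} apply.

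First I would recall that, since $E$ is a Landweber exact ring spectrum, $E$–localization coincides with $E$–nilpotent completion (Bousfield), so each $E$–local $X_\alpha$ is naturally recovered from its $E$–based cobar (Amitsur) resolution as $X_\alpha \simeq \Tot\bigl(E^{\sm(\bullet+1)} \sm X_\alpha\bigr)$, with a strongly convergent $E$–Adams spectral sequence. Finite totalizations are finite homotopy limits, hence commute with the pro-limit over $\alpha$, so
\[
\mylim{\CatOf{Spectra}}{\alpha} X_\alpha \;\simeq\; \Tot\bigl(W^\bullet\bigr), \qquad W^s \;:=\; \mylim{\CatOf{Spectra}}{\alpha}\bigl(E^{\sm(s+1)} \sm X_\alpha\bigr),
\]
a cosimplicial spectrum each of whose levels is an $E$–module. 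The target spectral sequence will be the $E$–homology spectral sequence of this cosimplicial object, with $E_1$–term the cosimplicial $\Gamma$–comodule $E_* W^\bullet$ and abutment $E_* \Tot(W^\bullet) = E_* \mylim{\CatOf{Spectra}}{\alpha} X_\alpha$.

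The substantive computation is the identification of $E_* W^\bullet$. By the Künneth isomorphism (flatness of $\Gamma$), $E_*\bigl(E^{\sm(s+1)} \sm X_\alpha\bigr) \cong \Gamma \otimes_A \overline\Gamma^{\otimes_A s} \otimes_A E_* X_\alpha$ is an extended comodule; since $\{E_* X_\alpha\}_\alpha$ is Mittag–Leffler and tensoring an ML system of $A$–modules with a flat module preserves the Mittag–Leffler property, these form ML systems in $\alpha$, so their $\lim^1$ vanishes and the Milnor sequence identifies $E_* W^s$ with $\mylim{\CatOf{Modules}_A}{\alpha}\bigl(\Gamma \otimes_A \overline\Gamma^{\otimes_A s}\otimes_A E_* X_\alpha\bigr)$. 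By \Cref{LimitsAgreeForExtendedThings} this limit is itself the extended comodule $\Gamma \otimes_A \mylim{\CatOf{Modules}_A}{\alpha}\bigl(\overline\Gamma^{\otimes_A s}\otimes_A E_* X_\alpha\bigr)$, and assembling over $s$ exhibits $E_* W^\bullet$ as $\mylim{\CatOf{Comod}_\Gamma}{\alpha}$ applied to the one-sided cobar resolution $\Cobar(\Gamma; \overline\Gamma; E_* X_\alpha)$. By \Cref{CobarComputesDerivedLimit} — whose Mittag–Leffler hypothesis is exactly what we have verified, with flasqueness supplied by \Cref{ProducingFlasqueResns} — the cohomology of this cosimplicial comodule is $R^s\mylim{\CatOf{Comod}_\Gamma}{\alpha} E_* X_\alpha$. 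Since the $d_1$–differential of the $E$–homology spectral sequence is the alternating sum of cofaces, i.e.\ the cobar differential, this gives $E_2^{s,*} = R^s\mylim{\CatOf{Comod}_{E_* E}}{\alpha} E_* X_\alpha$, as required.

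The hard part — and the reason the $E$–locality of the $X_\alpha$ is indispensable — is justifying that there really is such an $E$–homology spectral sequence converging to $E_* \mylim{\CatOf{Spectra}}{\alpha} X_\alpha$: the functor $E \sm (-)$ does not commute with homotopy limits in general, and the content of the theorem is precisely that locality together with the Mittag–Leffler hypothesis tames this failure. Concretely, one must note that $E \sm \Tot_s(W^\bullet) \to \Tot_s(E \sm W^\bullet)$ is an equivalence (as $\Tot_s$ is a finite, hence exact, limit) and then handle the passage to the limit over $s$, where a Mittag–Leffler condition on the $E$–homology of the $\Tot$–tower kills the surviving $\lim^1$–terms; this step runs parallel to the inverse–limit analysis of \Cref{NaiveCotorSSeq} and \Cref{CorrectConvergence} in the main text and is where I expect the real work to lie. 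I expect the remaining convergence bookkeeping — upgrading Boardman's conditional convergence to strong convergence — to be routine given the Mittag–Leffler hypothesis, since the spectral sequence arises as an inverse limit of the individually strongly convergent $E$–Adams spectral sequences of the $X_\alpha$.
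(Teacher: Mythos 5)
Your identification of the $E_2$--page is essentially the paper's own argument: levelwise the cosimplicial pieces are $E(d)$--module spectra, their $E(d)$--homology is the extended comodule $E(d)_*E(d)\otimes_{E(d)_*}\pi_*(-)$, the Mittag--Leffler hypothesis lets the limit over $\alpha$ pass inside (this is exactly \Cref{EnIsEasyForModules}, built from \Cref{EHomologyOfModules} and \Cref{LimitsAgreeForExtendedThings}), and \Cref{CobarComputesDerivedLimit} converts the cohomology of the resulting cobar-type complex into $R^*\mylim{\CatOf{Comod}_{E(d)_*E(d)}}{\alpha}E(d)_*X_\alpha$. Up to using the unnormalized Amitsur resolution instead of the $\overline{E(d)}$--tower of \Cref{DefnHopkinsInverseSSeq}, this part is fine.

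The genuine gap is convergence, which you correctly flag as ``the hard part'' but then only gesture at. Your proposed mechanism --- a Mittag--Leffler condition on $E(d)_*\Tot_s(W^\bullet)$ in the $s$--direction killing $\lim^1$--terms, plus the heuristic that the spectral sequence is an inverse limit of strongly convergent Adams spectral sequences --- does not constitute an argument. The hypothesis of the theorem gives Mittag--Leffler behavior only in the $\alpha$--direction, not in the Adams filtration direction; and even if one had $\lim^1$--vanishing for the groups $E(d)_*\Tot_s(W^\bullet)$, that does not address the real problem, namely that $E(d)\sm(-)$ fails to commute with the infinite limit over $s$ --- exactly the failure the theorem is meant to control, so invoking it here is circular. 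Nor do inverse limits of strongly convergent spectral sequences converge in general. The paper closes this gap by an input you never use: the $E(d)$--local sphere is $E(d)$--prenilpotent (\Cref{SphereIsPrenilpotent}, via Ravenel's torsion--free prenilpotent finite spectrum, the Hopkins--Smith thick subcategory theorem, and the Bousfield equivalence of $L_{E(d)}BP$ with $E(d)$), and since $L_{E(d)}$ is smashing this gives \emph{every} $E(d)$--local spectrum a functorial \emph{finite} cosimplicial resolution by $E(d)$--module spectra (\Cref{FiniteEnResolutions}). Hence the tower of \Cref{DefnHopkinsInverseSSeq} is pro-equivalent to a finite tower, the spectral sequence is concentrated in a horizontal band of bounded width, and strong convergence is automatic --- no $\lim^1$ analysis over $s$ is needed. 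Without this (or some substitute for the horizontal vanishing line), your proof remains conditional at best.
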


The proof of this theorem relies on some shorter results, useful in their own right.  Our first subgoal is to show that the $E(d)$--homology of $E(d)$--modules results in an extended comodule, which gives us access to the limit trick in \Cref{LimitsAgreeForExtendedThings}.

\begin{lemma}\label{EHomologyOfModules}
For $M$ an $E(d)$--module spectrum, there is a natural isomorphism \[E(d)_* M \cong E(d)_* E(d) \otimes_{E(d)_*} \pi_* M.\]
\end{lemma}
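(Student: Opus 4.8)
**Proof proposal for Lemma (E(d)_* M ≅ E(d)_* E(d) ⊗ π_* M).**

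The plan is to reduce the statement to the case $M = E(d)$ itself and then to promote that base case along colimits and cofiber sequences, using that $E(d)$ is a homotopy-commutative ring spectrum with $E(d)_* E(d)$ flat over $E(d)_*$. First I would observe that the natural map in question comes from the lax monoidal structure of $E(d)_*(-)$: for any spectrum $X$ there is a product map
\[
E(d)_* E(d) \otimes_{E(d)_*} \pi_* X \longrightarrow \pi_*(E(d) \wedge X)
\]
built by smashing the unit $\mathbb S \to E(d)$ into the first factor and the given map $\mathbb S \wedge X \simeq X$ into the second, then applying the multiplication $E(d) \wedge E(d) \to E(d)$. When $X = M$ carries an $E(d)$-module structure, the target $\pi_*(E(d) \wedge M)$ can be rewritten — via the action map $E(d) \wedge M \to M$, which admits a section up to the coaction — so that the candidate isomorphism is exactly this product map; I would want to set that identification up carefully, since it is what ties ``$E(d)_* M$'' to ``extended comodule'' in the sense needed for \Cref{LimitsAgreeForExtendedThings}.

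Next I would verify the base case $M = E(d)$, where the assertion is precisely the computation of the Hopf algebroid of cooperations: $E(d)_* E(d) \cong E(d)_* E(d) \otimes_{E(d)_*} E(d)_*$, which is a tautology. With the base case in hand, the module category is generated under colimits and shifts by $E(d)$, so I would argue by a standard cellular induction. Concretely: both sides of the claimed isomorphism, viewed as functors from $E(d)$-modules to graded $E(d)_* E(d)$-comodules, preserve arbitrary direct sums (on the left because $E(d)_*(-)$ commutes with coproducts, on the right because $\otimes$ and $\pi_*$ do), and both send cofiber sequences of $E(d)$-modules to long exact sequences (the left-hand side by the homology long exact sequence, the right-hand side because $E(d)_* E(d)$ is \emph{flat} over $E(d)_*$, so tensoring is exact). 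The natural transformation between them is an isomorphism on $E(d) = E(d)[0]$ and hence on all suspensions $\Sigma^n E(d)$; by the flatness-enabled five-lemma argument it is an isomorphism on all finite cell $E(d)$-modules, and by the coproduct-compatibility and a filtered-colimit argument it is an isomorphism on all $E(d)$-modules.

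The main obstacle I anticipate is not the inductive skeleton — that is routine — but pinning down the first step: exhibiting the comparison map as a genuine natural transformation of comodule-valued functors and checking it agrees, on $M = E(d)$, with the identity on $E(d)_* E(d)$ under the correct comodule structures (the ``geometric $G \times X$'' picture of the footnote). One must be careful that the module structure on $M$ is used \emph{only} to produce the splitting of $E(d) \wedge M \to M$ up to the relevant coaction, and that flatness of $\Gamma = E(d)_* E(d)$ over $A = E(d)_*$ is exactly the hypothesis making the exactness claims legitimate — this is the same flatness hypothesis carried throughout \Cref{LimitAppendix}. Once the map is correctly identified, everything else is the formal ``generated under colimits'' argument.
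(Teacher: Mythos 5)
Your approach is correct but genuinely different from the paper's. You argue by cellular induction: construct a natural transformation, check it on the compact generator $M = E(d)$, then propagate along cofiber sequences (using flatness of $E(d)_* E(d)$ to keep the right-hand side exact) and filtered colimits. The paper instead uses a single spectral sequence: it observes $E(d) \sm M \simeq (E(d) \sm E(d)) \sm_{E(d)} M$, then applies the K\"unneth/Tor spectral sequence $\Tor^{E(d)_*}_{*,*}(E(d)_* E(d), \pi_* M) \Rightarrow \pi_*((E(d)\sm E(d)) \sm_{E(d)} M)$, which collapses precisely because $E(d)_* E(d)$ is flat over $E(d)_*$. Both proofs turn on the same flatness fact, but the paper's one-step argument buys you something concrete that you flagged as the hard part of your own proof: it never has to construct the comparison map by hand. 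The identification of $E(d) \sm M$ as a relative smash product \emph{is} the comparison, so the naturality and compatibility with the module/coaction structures come for free from the naturality of the Tor spectral sequence. Your version is more elementary (no K\"unneth machinery, just long exact sequences and the five lemma) but requires the care you correctly identify: pinning down the map $E(d)_* E(d) \otimes_{E(d)_*} \pi_* M \to E(d)_* M$ as the composite of the external smash pairing $\pi_*(E(d)\sm E(d)) \otimes \pi_* M \to \pi_*(E(d)\sm E(d)\sm M)$ with the action $E(d) \sm M \to M$ on the right two factors (your first description, involving the multiplication $E(d)\sm E(d)\to E(d)$, is not quite the right map), descending over $E(d)_*$, and then checking compatibility with the boundary maps in the long exact sequences before the five lemma applies.
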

\begin{proof}
Since $E(d)$ is an $A_\infty$--ring spectrum, there is a strongly convergent spectral sequence describing the tensor of a right $E(d)$--module spectrum $N$ and left $E(d)$--module spectrum $M$: \[\Tor^{E(d)_*}_{*, *}(N, M) \Rightarrow \pi_*(N \sm_{E(d)} M).\]  Taking $N = E(d) \sm E(d)$ and noting that $\pi_* N = E(d)_* E(d)$ is a flat right $E(d)_*$--module, the spectral sequence \[\Tor^{E(d)_*}_{*, *}(E(d)_* E(d), E(d)_* M) \Rightarrow \pi_* ((E(d) \sm E(d)) \sm_{E(d)} M)\] is concentrated on the $0$--line.  Using the freeness of $N$, this collapse gives \[E(d)_* E(d) \otimes_{E(d)_*} \pi_* M \cong \pi_* (E(d) \sm E(d) \sm_{E(d)} M) \cong \pi_* (E(d) \sm M) = E(d)_* M. \qedhere\]
\end{proof}

\begin{corollary}\label{EnIsEasyForModules}
If $\{M_\alpha\}_\alpha$ is a system of $E(d)$--module spectra which is Mittag-Leffler on homotopy, then there is an isomorphism \[E(d)_* \mylim{\CatOf{Modules}_{E(d)}}{\alpha} M_\alpha \cong \mylim{\CatOf{Comod}_{E(d)_* E(d)}}{\alpha} \{E(d)_* M_\alpha\}.\]
\end{corollary}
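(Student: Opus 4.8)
The plan is to deduce the statement by threading \Cref{EHomologyOfModules} through \Cref{LimitsAgreeForExtendedThings}, using a Milnor $\lim^1$ sequence to bridge between the inverse limit of module spectra and the inverse limit of their homotopy modules. The conceptual point is that the $E(d)$--homology of an $E(d)$--module is the \emph{extended} comodule built on its homotopy, and extended comodules are precisely the ones for which tensoring past an inverse limit is harmless.

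First I would note that the forgetful functor $\CatOf{Modules}_{E(d)} \to \CatOf{Spectra}$ preserves limits, so the underlying spectrum of $\mylim{\CatOf{Modules}_{E(d)}}{\alpha} M_\alpha$ is the inverse limit computed in $\CatOf{Spectra}$; its homotopy therefore sits in a Milnor sequence built from $\{\pi_* M_\alpha\}_\alpha$, and the Mittag--Leffler hypothesis on homotopy kills the $\lim^1$ term, giving $\pi_* \mylim{\CatOf{Modules}_{E(d)}}{\alpha} M_\alpha \cong \mylim{\CatOf{Modules}_{E(d)_*}}{\alpha} \pi_* M_\alpha$ as $E(d)_*$--modules. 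Next, apply \Cref{EHomologyOfModules} to the $E(d)$--module spectrum $\mylim{\CatOf{Modules}_{E(d)}}{\alpha} M_\alpha$ and substitute the previous isomorphism:
\[
E(d)_* \mylim{\CatOf{Modules}_{E(d)}}{\alpha} M_\alpha \;\cong\; E(d)_* E(d) \otimes_{E(d)_*} \mylim{\CatOf{Modules}_{E(d)_*}}{\alpha} \pi_* M_\alpha .
\]
Now invoke \Cref{LimitsAgreeForExtendedThings} with $A = E(d)_*$ and $\Gamma = E(d)_* E(d)$ applied to the pro-system $\{\pi_* M_\alpha\}_\alpha$ to commute the tensor factor past the inverse limit of comodules, and then use \Cref{EHomologyOfModules} once more, this time levelwise, to recognize each extended comodule $E(d)_* E(d) \otimes_{E(d)_*} \pi_* M_\alpha$ as $E(d)_* M_\alpha$. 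Chaining these isomorphisms yields
\[
E(d)_* \mylim{\CatOf{Modules}_{E(d)}}{\alpha} M_\alpha \;\cong\; \mylim{\CatOf{Comod}_{E(d)_* E(d)}}{\alpha} \{E(d)_* M_\alpha\},
\]
as desired.

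The step that deserves the most care is the naturality bookkeeping around \Cref{EHomologyOfModules}: one must know that the isomorphism produced there is the one induced by the unit $E(d) \to E(d) \sm E(d)$, hence is a map of $E(d)_* E(d)$--comodules that is natural in the module spectrum. Granting this — which a glance at the proof of \Cref{EHomologyOfModules} confirms — applying it both to the limit object and to each $M_\alpha$ assembles into a genuinely commuting ladder, so that the two outer applications of \Cref{EHomologyOfModules} are compatible with the comodule-level inverse limit appearing on the right-hand side. Everything else is formal once this compatibility is in hand.
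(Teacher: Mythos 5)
Your proof is correct and is essentially the paper's own argument: both apply \Cref{EHomologyOfModules} to the limit, use the Mittag--Leffler hypothesis to commute $\pi_*$ past the inverse limit, invoke \Cref{LimitsAgreeForExtendedThings} to pull the extension functor out of the comodule-level limit, and then apply \Cref{EHomologyOfModules} levelwise. The naturality remark you add is a sensible piece of bookkeeping that the paper leaves implicit.
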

\begin{proof}
\begin{align*}
E(d)_* \mylim{\CatOf{Modules}_{E(d)}}{\alpha} M_\alpha & \cong E(d)_* E(d) \otimes_{E(d)_*} \pi_* \mylim{\CatOf{Modules}_{E(d)}}{\alpha} M_\alpha & \text{(\Cref{EHomologyOfModules})}\\
& \cong E(d)_* E(d) \otimes_{E(d)_*} \mylim{\CatOf{Modules}_{E(d)_*}}{\alpha} \{\pi_* M_\alpha\}_\alpha & \text{(Mittag-Leffler)} \\
& \cong \mylim{\CatOf{Comod}_{E(d)_* E(d)}}{\alpha} \left(E(d)_* E(d) \otimes_{E(d)_*} \pi_* M_\alpha\right) & \text{(\Cref{LimitsAgreeForExtendedThings})} \\
& \cong \mylim{\CatOf{Comod}_{E(d)_* E(d)}}{\alpha} E(d)_* M_\alpha. & \text{(\Cref{EHomologyOfModules})}
\end{align*}
\end{proof}

Our next goal is to find a topological object to which we can apply \Cref{CobarComputesDerivedLimit}.  This will be a certain Adams--type spectral sequence, and because we have not really used that tool in this paper, we quickly remind the reader of its construction.
\begin{definition}
For a ring spectrum $E$ and spectrum $X$, the following diagram defines an $E$--Adams tower for $X$:
\begin{center}
\begin{tikzcd}
X \arrow{d} & \overline E \sm X \arrow{l} \arrow{d} & \overline E^{\sm 2} \sm X \arrow{l} \arrow{d} & \overline E^{\sm 3} \sm X \arrow{l} \arrow{d} & \cdots \arrow{l} \\
E \sm X & E \sm \overline E \sm X & E \sm \overline E^{\sm 2} \sm X & E \sm \overline E^{\sm 3} \sm X & \cdots,
\end{tikzcd}
\end{center}
where $\overline E$ participates in the fiber sequence \[\overline E \to S \xrightarrow{\eta_E} E.\]  In good cases, this spectral sequence converges conditionally to homotopy of the $E$--nilpotent completion of $X$~\cite[Proposition 6.3]{Bousfield}.  In the case of a smashing localization, the homotopy of the $E$--nilpotent completion of $X$ agrees with that of the Bousfield $E$--localization of $X$~\cite[Corollary 6.13]{Bousfield}.  In better cases still, the $E^2$--page of the spectral sequence can be identified~\cite[Theorem 2.2.11]{RavenelGreenBook} as \[E^2_{*, *} \cong \Cotor^{E_* E}_{*, *}(E_*, E_* X).\]
\end{definition}

We are now in a position to construct the spectral sequence in Hopkins's inverse limit theorem.

\begin{definition}\label{DefnHopkinsInverseSSeq}
Given an pro-system of spectra $X_\alpha$, we may construct a family of interlocking fiber sequences
\begin{center}
\begin{tikzcd}[column sep=0.1cm]
\mylim{\CatOf{Spectra}}{\alpha} X_\alpha \arrow{d} & \mylim{\CatOf{Spectra}}{\alpha} \overline{E(d)} \sm X_\alpha \arrow{l} \arrow{d} & \mylim{\CatOf{Spectra}}{\alpha} \overline{E(d)}^{\sm 2} \sm X_\alpha \arrow{l} \arrow{d} & \cdots \arrow{l} \\
\mylim{\CatOf{Spectra}}{\alpha} E(d) \sm X_\alpha & \mylim{\CatOf{Spectra}}{\alpha} E(d) \sm \overline{E(d)} \sm X_\alpha & \mylim{\CatOf{Spectra}}{\alpha} E(d) \sm \overline{E(d)}^{\sm 2} \sm X_\alpha & \cdots,
\end{tikzcd}
\end{center}
which upon applying $E(d)$--homology gives a spectral sequence with $E^1$--page \[E^1_{*, t} = E(d)_* \mylim{\CatOf{Spectra}}{\alpha} \left( E(d) \sm \overline{E(d)}^{\sm t} \sm X_\alpha \right)\] and with target $E(d)_* \mylim{\CatOf{Spectra}}{\alpha} X_\alpha$.  If we additionally assume that the pro-system consists of $E(d)$--local spectra such that the induced system $\{E(d)_* X_\alpha\}_\alpha$ is Mittag-Leffler, we may combine the preceding corollaries to compute \[E(d)_* \mylim{\CatOf{Spectra}}{\alpha} \left( E(d) \sm \overline{E(d)}^{\sm t} \sm X_\alpha \right) \cong\] \[E(d)_* E(d) \otimes_{E(d)_*} \left(E(d)_* \overline{E(d)}\right)^{\otimes_{E(d)_*} t} \otimes_{E(d)_*} \mylim{\CatOf{Modules}_{E(d)_*}}{\alpha} E(d)_* X_\alpha\] and hence by \Cref{CobarComputesDerivedLimit} \[E^2_{*, *} \cong R^* \mylim{\CatOf{Comod}_{E(d)_* E(d)}}{\alpha} E(d)_* X_\alpha.\]
\end{definition}

\begin{lemma}\label{SphereIsPrenilpotent}
The $E(d)$--Adams resolution for the $E(d)$--local sphere $L_{E(d)} \S^0$ is equivalent to a finite--dimensional cosimplicial resolution by $E(d)$--module spectra.  (In particular, the $E(d)$--Adams spectral sequence has a horizontal vanishing line.)
\end{lemma}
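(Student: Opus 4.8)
The plan is to reduce the assertion to the finiteness of the cohomological dimension of the category of $E(d)_* E(d)$--comodules and then to realize a finite injective resolution topologically; the parenthetical vanishing line will fall out along the way. As a first step I would recall that $E(d)$--localization is smashing (the Hopkins--Ravenel smash product theorem), so that $L_{E(d)} \S^0$ is $E(d)$--prenilpotent and hence, by the criterion recalled above (\cite[Corollary 6.13]{Bousfield}), coincides with the $E(d)$--nilpotent completion of $\S^0$. In particular, \emph{any} cosimplicial resolution of $\S^0$ by $E(d)$--module spectra whose $E(d)$--homology is a resolution of $E(d)_*\S^0 = E(d)_*$ in comodules has totalization $L_{E(d)}\S^0$; it therefore suffices to produce one that is \emph{finite--dimensional}, i.e., concentrated in cosimplicial degrees $0, \dots, N$ for some $N$.

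The homological heart is the claim that $\Cotor^s_{E(d)_* E(d)}(E(d)_*, M) = 0$ for all $s > N$, uniformly over $E(d)_* E(d)$--comodules $M$, where $N = N(d)$ depends only on $d$. I would obtain this from a change--of--rings to $(BP_*, BP_* BP)$ together with the algebraic chromatic spectral sequence: the height--$k$ chromatic subquotient computes the continuous cohomology of the extended Morava stabilizer group $\mathbb{G}_k$, and for $p \gg d$ the groups $\mathbb{G}_k$ with $k \le d$ are $p$--torsion--free of cohomological dimension $k^2$, so the height--$k$ layer is concentrated in cohomological degrees at most $k^2 + (d-k)$. Maximizing over $0 \le k \le d$ yields an $N$ of order $d^2 + d$. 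This step is the place where the standing hypothesis $p \gg \height\Gamma$ is used, and I expect it to be the main obstacle: it leans on the (nonelementary) computation of the cohomological dimension of the Morava stabilizer algebras, and at small primes $p$--torsion in $\mathbb{G}_k$ destroys the finite cohomological dimension, which is exactly why the main Theorem carries its large--prime caveat.

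Granting the vanishing line, dimension--shifting in the normalized cobar complex shows that the $N$--th syzygy of $E(d)_*$ is again a relative injective (an extended comodule), so $E(d)_*$ admits a resolution $E(d)_* \to J^0 \to \cdots \to J^N$ of length $N$ by cofree comodules $J^i = E(d)_* E(d) \otimes_{E(d)_*} B^i$. Because $E(d)_*$ is a regular Noetherian ring of finite Krull dimension, it has finite global dimension, so each $E(d)_*$--module $B^i$ has a finite projective resolution and is therefore the homotopy of an $E(d)$--module spectrum $\mathcal{B}^i$; by \Cref{EHomologyOfModules} we then have $E(d)_* \mathcal{B}^i \cong J^i$. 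A routine obstruction--theoretic argument lifting the comodule maps $J^i \to J^{i+1}$ assembles the $\mathcal{B}^i$ into a cosimplicial $E(d)$--module spectrum $\mathcal{B}^\bullet$ concentrated in degrees $0, \dots, N$ whose $E(d)$--homology is the resolution $J^\bullet$. By the reduction of the first paragraph its totalization is $L_{E(d)}\S^0$, so $\mathcal{B}^\bullet$ is the desired finite--dimensional resolution. The parenthetical statement is then immediate, since $\Cotor^{*,*}_{E(d)_* E(d)}(E(d)_*, E(d)_*)$ is the $E_2$--page of the $E(d)$--Adams spectral sequence and vanishes above filtration $N$.
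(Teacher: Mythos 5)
Your route is genuinely different from the paper's, and it has real gaps. The paper proves this lemma by a nilpotence argument that works at \emph{all} primes: Ravenel's smash product theorem machinery produces an $L_{E(d)}BP$--prenilpotent finite spectrum with torsion--free homology, the thick subcategory theorem of Hopkins--Smith then shows $\S^0$ itself is $L_{E(d)}BP$--prenilpotent, and the Bousfield equivalence of $L_{E(d)}BP$ with $E(d)$ converts this into $E(d)$--nilpotence of $L_{E(d)}\S^0$ --- which is precisely membership in the thick ideal generated by $E(d)$, i.e.\ a finite resolution by $E(d)$--module spectra. Your argument, by contrast, rests on finite cohomological dimension of the category of $E(d)_*E(d)$--comodules, which is only available for $p \gg d$; but the lemma carries no large--prime hypothesis, and it feeds (via \Cref{FiniteEnResolutions}) into \Cref{HopkinsEnSystem}, which is stated without any prime restriction. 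The large--prime hypothesis in the paper enters only later, at \Cref{ExistenceOfSTComplexes} and in the deduction of Sadofsky's theorem for $K(d)$ --- not here. So even if every step of yours were filled in, you would be proving a strictly weaker statement and misplacing where $p \gg d$ is actually used.

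Two of your steps also do not go through as written. First, the dimension--shifting is backwards: uniform vanishing of $\Cotor^s_{E(d)_*E(d)}(E(d)_*, M)$ for $s > N$ fixes the trivial comodule in the \emph{first} variable, whereas to conclude that the $N$\th{} cosyzygy $Z^N$ of $E(d)_*$ in the cobar resolution is relatively injective you need $\Ext^1(M', Z^N) = 0$ for all comodules $M'$, which by shifting along that resolution is $\Ext^{N+1}(M', E(d)_*)$ --- vanishing with $E(d)_*$ in the \emph{second} variable (or the full two--variable finite--cohomological--dimension statement), which your stabilizer--group argument does not supply. Second, the topological realization is not ``routine'': producing an $E(d)$--module spectrum $\mathcal B^i$ with $\pi_* \mathcal B^i \cong B^i$ is automatic only for modules of projective dimension at most $2$ over $\pi_* E(d)$ (finite global dimension of $E(d)_*$ alone does not kill the higher realization obstructions), and even granted the objects, rigidifying the chain of comodule maps $J^i \to J^{i+1}$ into a coherent finite cosimplicial diagram of $E(d)$--module spectra whose totalization is $L_{E(d)}\S^0$ is a genuine obstruction--theoretic problem that your proposal waves away. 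The paper's proof sidesteps all of this by obtaining the topological finiteness directly from nilpotence rather than lifting an algebraic resolution.
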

\begin{proof}
We recall the following terminology: a spectrum is \textit{$E$--nilpotent} if it belongs to the thick $\otimes$--ideal generated by $E$ and \textit{$E$--prenilpotent} if it is $E$--locally equivalent to an $E$--nilpotent spectrum~\cite[Definition 7.1.6]{RavenelOrangeBook}.  Work of Ravenel~\cite[Lemmas 8.3.7 and 8.3.1]{RavenelOrangeBook} gives an $L_{E(d)} BP$--prenilpotent finite spectrum $F$ whose ordinary homology is torsion--free.  Since finite prenilpotent spectra form a thick subcategory, it follows from their classification~\cite[Theorem 9]{HopkinsSmith} that $\S^0$ is $L_{E(d)} BP$--prenilpotent.  Since $L_{E(d)} BP$ and $E(d)$ share a Bousfield class~\cite[Theorem 7.3.2b and Lemma 8.1.4]{RavenelOrangeBook}, it follows that $L_{E(d)} \S^0$ is thus $E(d)$--nilpotent and hence has a finite $E(d)$--Adams resolution.
\end{proof}

\begin{corollary}\label{FiniteEnResolutions}
Every $E(d)$--local spectrum $X$ has a functorial finite--dimensional cosimplicial resolution by $E(d)$--module spectra.  The length of the resolution is independent of $X$ and dependent only on the prime $p$ and height $d$.
\end{corollary}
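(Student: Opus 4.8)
The plan is to obtain the resolution for a general $X$ by smashing $X$ against the finite cosimplicial resolution of the local sphere furnished by \Cref{SphereIsPrenilpotent}. Write $Y^\bullet$ for the $E(d)$--Adams cobar cosimplicial object of $L_{E(d)}\S^0$, so that $Y^t = E(d) \sm \overline{E(d)}^{\sm t}$ carries an $E(d)$--module structure via its leftmost factor and $\Tot Y^\bullet \simeq L_{E(d)}\S^0$; by \Cref{SphereIsPrenilpotent} this cosimplicial object may be replaced by one concentrated in cosimplicial degrees $0 \le t \le N$ for some $N = N(p,d)$ depending only on the prime and the height. One then sets the proposed resolution of $X$ to be $Y^\bullet \sm X$: each term $Y^t \sm X$ is again an $E(d)$--module spectrum, the assignment $X \mapsto Y^\bullet \sm X$ is manifestly functorial in $X$, and the cosimplicial length $N$ is untouched.

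It remains to check that $Y^\bullet \sm X$ really resolves $X$, i.e.\ that $\Tot(Y^\bullet \sm X) \simeq X$ for $X$ an $E(d)$--local spectrum. First, since $Y^\bullet$ is now a \emph{bounded} cosimplicial object, its totalization is a finite limit, and the functor $- \sm X$ on spectra is exact and hence commutes with finite limits; thus $\Tot(Y^\bullet \sm X) \simeq (\Tot Y^\bullet) \sm X \simeq L_{E(d)}\S^0 \sm X$. Second, because $L_{E(d)}$ is a smashing localization for the Johnson--Wilson spectra (Ravenel's smash product theorem), $L_{E(d)}\S^0 \sm X \simeq L_{E(d)} X \simeq X$, the last equivalence because $X$ was assumed $E(d)$--local. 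This produces the desired functorial finite cosimplicial resolution of $X$ by $E(d)$--module spectra, of length $N(p,d)$ independent of $X$.

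The genuinely substantive input is \Cref{SphereIsPrenilpotent}: without the finiteness of the Adams resolution of $L_{E(d)}\S^0$ (equivalently, its $E(d)$--nilpotence, which rests on the thick subcategory theorem), the totalization above would be an infinite limit that need not commute with $- \sm X$, and one would be thrown back onto the usual subtleties of conditional convergence. The only points I expect to require care in the write-up are the bookkeeping that identifies $Y^t \sm X$ as an $E(d)$--module compatibly with the cosimplicial structure maps, and the clean invocation of the smashing property to pin down the target of the totalization; everything else is formal manipulation of finite limits in a stable $\infty$--category.
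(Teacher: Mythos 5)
Your argument is essentially the paper's own proof: smash the finite cosimplicial resolution of $L_{E(d)}\S^0$ from \Cref{SphereIsPrenilpotent} with $X$, using that the bounded totalization (a finite limit) commutes with $-\sm X$ and that $L_{E(d)}$ is smashing to identify the result with $X$. The paper states this in one sentence; your write-up just makes the finite-limit and module-structure bookkeeping explicit, which is fine.
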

\begin{proof}
Since $L_{E(d)}$ is a smashing localization~\cite[Theorem 7.5.6]{RavenelOrangeBook}, we can smash the finite resolution for $L_{E(d)} \S^0$ guaranteed by \Cref{SphereIsPrenilpotent} with $X$.
\end{proof}

\begin{proof}[{Proof of \Cref{HopkinsEnSystem}}]
Having constructed the relevant spectral sequence in \Cref{DefnHopkinsInverseSSeq}, we need only address convergence.  \Cref{FiniteEnResolutions} shows that the homotopy inverse system in \Cref{DefnHopkinsInverseSSeq} is weakly equivalent to a finite inverse system.  It follows that, upon applying $E(d)$--homology, the resulting spectral sequence is concentrated in a finite horizontal band and hence is strongly convergent to $E(d)_* \mylim{\CatOf{Spectra}}{\alpha} X_\alpha$.
\end{proof}

\begin{lemma}[{\cite[Footnote 1]{Devinatz}, cf.\ \cite[Proposition 6.5]{HoveyStrickland}, \cite[Proposition 7.5]{HMS}}]\label{ExistenceOfSTComplexes}
For $p \gg d$, there exists an $E(d)$--local Smith--Toda complex $V(d-1)$, which is a finite complex with the property $E(d) \sm V(d-1) \simeq K(d)$.
\end{lemma}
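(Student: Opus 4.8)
The plan is to produce $V(d-1)$ as the final term of the classical tower of Smith--Toda complexes $V(-1), V(0), \ldots, V(d-1)$ and then to identify its $E(d)$--homology with $K(d)_*$. Set $V(-1) = L_{E(d)} \S^0$, and construct $V(n)$ inductively as the cofiber of a $v_n$--self-map $\Susp^{m} V(n-1) \to V(n-1)$. By the Hopkins--Smith periodicity theorem the type--$n$ complex $V(n-1)$ always admits \emph{some} $v_n$--self-map; what is needed is one whose effect on $BP$--homology is multiplication by $v_n$ to the \emph{first} power, so that $BP_* V(n) \cong BP_*/(p, v_1, \ldots, v_n)$. That the whole tower can be built with all of these torsion exponents equal to one is the content of the classical work of Smith, Toda, and others, and it is precisely here that the hypothesis $p \gg \height \Gamma$ enters: the relevant obstructions to achieving exponent one vanish once $p$ is large relative to $d$, whereas by results of Nave and others they genuinely fail to vanish at small primes. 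Granting this input, for $p \gg d$ the complex $V(d-1)$ exists, is finite, and satisfies $BP_* V(d-1) \cong BP_*/(p, v_1, \ldots, v_{d-1})$.

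It remains to compute $E(d) \sm V(d-1)$. Since $E(d)$ is Landweber exact, $E(d)_* V(d-1) \cong E(d)_* \otimes_{BP_*} BP_* V(d-1) \cong E(d)_*/(p, v_1, \ldots, v_{d-1}) \cong \F_p[v_d^{\pm 1}] = K(d)_*$. Concretely, smashing the defining cofiber sequences of $V(-1), \ldots, V(d-1)$ with $E(d)$ exhibits $E(d) \sm V(d-1)$ as the iterated cofiber of multiplication by $p, v_1, \ldots, v_{d-1}$ on $E(d)$, i.e.\ as the quotient of $E(d)$ by this regular sequence in $\pi_* E(d) = \Z_{(p)}[v_1, \ldots, v_{d-1}, v_d^{\pm 1}]$. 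But that quotient is, by construction, a model of the Morava $K$--theory spectrum $K(d)$; and since any two $E(d)$--modules with homotopy $E(d)_*$ modulo the \emph{same} regular sequence are equivalent, we conclude $E(d) \sm V(d-1) \simeq K(d)$.

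The only genuine difficulty is the first step --- producing $V(d-1)$ with exponent--one torsion at every stage of the tower --- a nontrivial classical input that we invoke rather than reprove and that is the sole source of the large--prime hypothesis. Once it is in hand, the identification of $E(d) \sm V(d-1)$ with $K(d)$ follows immediately from Landweber exactness together with the description of both spectra as iterated cofibers along $(p, v_1, \ldots, v_{d-1})$.
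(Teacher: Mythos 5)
Your proposal has a genuine gap at its crux: the claim that ``for $p \gg d$'' the global Smith--Toda complex $V(d-1)$ exists, as ``the content of the classical work of Smith, Toda, and others.'' No such theorem is known. The classical constructions only produce $V(0)$ at all primes, $V(1)$ for $p \ge 3$, $V(2)$ for $p \ge 5$, and $V(3)$ for $p \ge 7$; for general $n$ there is no existence result for global $V(n)$, even with $p$ arbitrarily large. (Nave's work gives nonexistence at particular small primes, but there is no complementary existence theorem covering all $n$.) Since you invoke precisely this missing input and then deduce everything else from it, the argument does not go through.

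This is exactly why the statement is about \emph{$E(d)$--local} Smith--Toda complexes and why the paper builds them $E(d)$--locally rather than by localizing a global model. The paper's proof runs an induction entirely inside the $E(d)$--local category: since $E(d)_* V(j)$ is concentrated in degrees divisible by $2(p-1)$, Ravenel's sparseness result forces the $E(d)$--based Adams spectral sequence computing $[V(j), V(j)]_*$ to have differentials only on pages $r$ with $2(p-1) \mid r - 1$; combined with the horizontal vanishing line at filtration $d^2$ (from \Cref{FiniteEnResolutions}), the condition $2(p-1) > d^2$ kills all possible differentials, so $v_{j+1} \in \Ext^{0,|v_{j+1}|}$ is a permanent cycle. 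This realizes a $v_{j+1}$--self-map of $V(j)$ in the local category and one takes its cofiber. The two features that make this work --- the finite horizontal vanishing line and the sparseness --- are properties of the $E(d)$--local Adams spectral sequence and have no global analogue, which is precisely the obstruction to your approach. Your second step (computing $E(d) \sm V(d-1) \simeq K(d)$ via Landweber exactness and iterated cofibers) is fine once a suitable $V(d-1)$ is in hand, and it is essentially what the paper does implicitly as well; the gap is entirely in the construction of the complex.
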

\begin{proof}
We actually demonstrate the existence of $E(d)$--local Smith--Toda complexes $V(j)$ for $j < d$, proceeding by induction from $V(-1) = \S$.  The homology groups $E(d)_* V(j)$ are concentrated in degrees which are multiples of $|v_1| = 2(p-1)$, and so we may additionally take $p > 2$ and apply Ravenel's sparseness result~\cite[Proposition 4.4.2]{RavenelGreenBook} to conclude that the $E(d)$--based Adams spectral sequence \[E_2^{s,t} = \Ext^{s,t}_{E(d)_* E(d)}(E(d)_* V(j), E(d)_* V(j))\] is concentrated in degrees satisfying $2(p-1) \mid t$, hence that the only nontrivial differentials lie on page indices $r$ satisfying $2(p-1) \mid r-1$.  At the same time, the finite resolution length stated qualitatively in \Cref{FiniteEnResolutions} can be shown quantitatively to be $d^2$~\cite[Theorem 6.2.10.(b)]{RavenelGreenBook}, so that if $2(p-1) > d^2$ there are then no $r$ satisfying $2(p-1) \mid r - 1$ with both the source and target of $d_r$ lying within the $t$--band $0 \le t \le d^2$.  It follows that the $E(d)$--based Adams spectral sequence collapses, hence that the element \[v_{j+1} \in \operatorname{Ext}^{|v_{j+1}|, 0}_{E(d)_* E(d)}(E(d)_* V(j), E(d)_* V(j))\] survives, and hence that the complex $V(j + 1) = V(j) / v_{j+1}$ exists.
\end{proof}

From here, we can conclude Sadofsky's theorem for $K(d)$.

\begin{proof}[{Proof of Sadofsky's theorem for $k = K(d)$}]
Use \Cref{ExistenceOfSTComplexes} to extract an $E(d)$--local Smith--Toda complex $V(d-1)$.  Replacing the system $\{X_\alpha\}_\alpha$ by $\{X_\alpha \sm V(d-1)\}_\alpha$ and applying Sadofsky's result for $E(d)$--homology yields the desired spectral sequence.\footnote{For a while, the author thought that because $K(d)$ admits a finite resolution in $E(d)$--module spectra, this proof would go through at all primes, but it seems to be a dead end.}
\end{proof}

\begin{remark}
An odd wrinkle of this construction is that the inverse limit of $\{K(d)_* X_\alpha\}_\alpha$ is still taken in the category of $E(d)_* E(d)$--comodules, \emph{not} of $K(d)_* K(d)$--comodules.  However, the $E(d)_* E(d)$--comodule structure of $K(d)_* X_\alpha$ factors through the Hopf algebroid $(K(d)_*, \Gamma')$, where $\Gamma'$ is given by
\begin{align*}
\Gamma' = K(d)_* \otimes_{BP_*} BP_* BP \otimes_{BP_*} K(d)_* & = K(d)_*[t_1, t_2, \ldots] / (v_d t_j^{p^d} - v_d^{p^j} t_j \mid j > 0) \\
& \subsetneq K(d)_* K(d) = \Gamma' \otimes \Lambda[\tau_1, \ldots, \tau_{d-1}].
\end{align*}
The reader should compare these stray $\tau_*$ cooperations with the Bockstein operations appeaing in the proof of \cite[Proposition 8.4.e-f]{HoveyStrickland}.
\end{remark}

\subsection{Sadofsky's theorem for ordinary homology with field coefficients}

In the case $k = HK$ for a field $K$ of positive characteristic, all of the above constructions can be re-done to produce a derived inverse limit spectral sequence for $HK$--homology.  However, our convergence argument fails badly, as the $p$--complete sphere is no longer finitely resolvable by $HK$--module spectra---after all, the $HK$--Adams spectral sequence has both an infinite tower and a vanishing line of slope $1$, rather than the horizontal vanishing line present in the $E(d)$--Adams spectral sequence.  It follows that the resulting spectral sequence is merely conditionally convergent, and additional hypotheses on the system $\{X_\alpha\}_\alpha$ are required to do any better.  In spite of the lack of topological finiteness, Sadofsky has proven the following theorem whose proof we will not recount:

\begin{theorem}[{\cite{SadofskyHk}}]
The $HK$--based inverse limit spectral sequence converges strongly in the case that $R^s \mylim{\CatOf{Modules}_{\mathcal A_*}}{\alpha} H_*(X_\alpha) = 0$ for $s \gg 0$. \qed
\end{theorem}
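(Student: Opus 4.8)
The plan is to upgrade the conditional convergence already available for this spectral sequence to \emph{strong} convergence, by observing that the stated hypothesis forces a horizontal vanishing line. First I would recall the construction: exactly as in \Cref{DefnHopkinsInverseSSeq}, the $HK$--based inverse limit spectral sequence is obtained by applying $HK$--homology to the tower of fiber sequences $\{\mylim{\CatOf{Spectra}}{\alpha}(HK \sm \overline{HK}^{\sm t} \sm X_\alpha)\}_t$ coming from the $HK$--Adams towers of the $X_\alpha$; it is conditionally convergent, with abutment $HK_* \mylim{\CatOf{Spectra}}{\alpha} X_\alpha$ in the sense fixed there. Because $HK_* = K$ is a graded field, every flatness hypothesis and every Mittag--Leffler--preservation step invoked in \Cref{DefnHopkinsInverseSSeq} holds automatically, so the same chain of identifications --- using \Cref{LimitsAgreeForExtendedThings} and the cobar computation of \Cref{CobarComputesDerivedLimit} --- presents the $E_2$--page, in Adams filtration degree $s$, as
\[E_2^{s,\ast} \;\cong\; R^s \mylim{\CatOf{Comod}_{\mathcal{A}_*}}{\alpha} HK_* X_\alpha,\]
the right--derived inverse limit of the system of $\mathcal{A}_*$--comodules $\{H_*(X_\alpha)\}$ (equivalently, of $\mathcal{A}$--modules, for systems of finite type).

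Next I would feed in the hypothesis. Fix $N$ with $R^s \mylim{\CatOf{Comod}_{\mathcal{A}_*}}{\alpha} H_*(X_\alpha) = 0$ for $s > N$. By the displayed identification, $E_2^{s,\ast} = 0$ whenever $s > N$, and since each $E_{r+1}$ is a subquotient of $E_r$, this vanishing persists on every page $E_r$ with $r \ge 2$. As the spectral sequence is also supported in filtrations $s \ge 0$, in each fixed total degree only finitely many bidegrees can be nonzero, so every differential touching a given bidegree vanishes from some finite page onward; hence $E_r^{s,t} = E_\infty^{s,t}$ for $r \gg 0$, and in particular the pro-system $\{E_r^{s,t}\}_r$ is eventually constant, so that $RE_\infty := \mylim{r}{1} E_r = 0$.

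Finally I would cite Boardman's convergence machine: a conditionally convergent spectral sequence with $RE_\infty = 0$ converges strongly~\cite[Theorem 7.1]{Boardman}. Combined with the conditional convergence recalled in the first step, this gives the asserted strong convergence. I expect the only step demanding real care to be the bookkeeping in the first paragraph: matching the derived inverse limits in the hypothesis --- nominally taken in $\mathcal{A}_*$--modules --- with the derived inverse limits in $\mathcal{A}_*$--comodules that appear on $E_2$. This is handled as in the appendix, using that $\mathcal{A}_*$ is free over the field $K$: every term of the one--sided cobar complex is then an extended comodule, \Cref{LimitsAgreeForExtendedThings} applies termwise, and consequently the comodule-- and module--level derived inverse limits of the system are computed by one and the same complex, so they agree --- and in particular vanish together --- above degree $N$. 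Everything else in the argument is formal.
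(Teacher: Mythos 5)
The paper does not actually prove this statement: it is quoted from Sadofsky with the explicit remark that the proof ``we will not recount,'' so there is no in-paper argument to compare yours against. Judged on its own terms, the heart of your proposal is correct and is surely the intended mechanism: the hypothesis gives a horizontal band, $E_2^{s,\ast}=0$ for $s>N$, the differentials $d_r$ move filtration by $r$, so at every bidegree the spectral sequence degenerates at a finite page, the cycle groups $Z_r^{s,t}$ stabilize, Boardman's obstruction group $RE_\infty$ vanishes (note it is $\mylim{}{1}_r Z_r^{s,t}$, not literally $\mylim{}{1}_r E_r$ -- a harmless slip given the eventual degeneration), and conditional convergence plus $RE_\infty=0$ upgrades to strong convergence; the precise citation is Boardman's theorem on half-plane spectral sequences with entering differentials (Theorem 7.3--7.4 of that paper) rather than Theorem 7.1, but this is cosmetic.

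The one claim you should retract is that ``every Mittag--Leffler--preservation step invoked in \Cref{DefnHopkinsInverseSSeq} holds automatically'' because $HK_*$ is a graded field. What the field hypothesis buys is flatness (and K\"unneth isomorphisms); it does not make pro-systems of graded $K$--vector spaces Mittag--Leffler, nor does it kill $\mylim{}{1}$ of such towers -- a decreasing chain of subspaces of an infinite-dimensional space need not stabilize, and the tower $\cdots \xrightarrow{x} K[x] \xrightarrow{x} K[x]$ has nonvanishing $\mylim{}{1}$. So the identification of the $E_1$-- and $E_2$--pages via \Cref{LimitsAgreeForExtendedThings} and \Cref{CobarComputesDerivedLimit} still requires the Mittag--Leffler hypothesis on $\{H_*(X_\alpha)\}_\alpha$ (and locality of the $X_\alpha$) as standing assumptions carried over from the earlier construction -- which is exactly what the paper's lead-in ``all of the above constructions can be re-done'' intends -- and likewise conditional convergence with abutment $HK_*\mylim{\CatOf{Spectra}}{\alpha}X_\alpha$ is part of that asserted setup rather than a consequence of working over a field. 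With those hypotheses stated rather than claimed to be automatic, your argument for the conditional-to-strong upgrade stands; your reconciliation of the module-- versus comodule--level derived limits via the one-sided cobar complex over the free (hence extended) terms is also the right way to read the hypothesis as stated.
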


%
%
%
\bibliographystyle{gtart}
\bibliography{annular}

\end{document}